\documentclass[11pt]{article}

\usepackage[a4paper,left=2cm,right=2cm,top=1.5cm,bottom=2cm]{geometry}
\usepackage[utf8]{inputenc}
\usepackage[T1]{fontenc}
\usepackage[english]{babel}
\usepackage{amsthm}
\usepackage{amsmath}
\usepackage{amssymb}
\usepackage{mathrsfs}
\usepackage{setspace}
\usepackage{float}
\usepackage{multirow}
\usepackage{stmaryrd}
\usepackage{color}
\usepackage{wrapfig}
\usepackage{pifont}
\usepackage{array}
\usepackage[colorlinks=true,linkcolor=ocre,citecolor=ocre]{hyperref}
\usepackage{dsfont}
\usepackage{upgreek}
\usepackage{nicefrac}
\usepackage{tikz}{}
\usepackage{gensymb}
\usepackage{FiraSans}
\usepackage{graphicx}
\usepackage{caption}
\renewenvironment{proof}{\noindent{\sffamily{\textbf{Proof :}}}}{\begin{flushright}$\square$\end{flushright}}

\newcommand{\IE}{\mathbb{E}}

\newcommand{\N}{\mathbb{N}}

\newcommand{\IR}{\mathbb{R}}
\newcommand{\R}{\mathbb{R}}

\newcommand{\IT}{\mathbb{T}}

\newcommand{\drm}{\mathrm d}

\newcommand{\CD}{\mathcal D}
\newcommand{\CS}{\mathcal S}

\newcommand{\CE}{\mathcal E}

\newcommand{\CC}{\mathcal C}
\newcommand{\CH}{\mathcal H}

\newcommand{\CB}{\mathcal B}

\newcommand{\CW}{\mathcal W}

\newcommand{\SU}{\mathscr{U}}

\newcommand{\SF}{\mathscr{F}}

\renewcommand{\P}{\mathsf{P}}

\newcommand{\PI}{\mathsf{\Pi}}

\newcommand{\DC}{\mathsf{C}}

\newcommand{\Wick}[1]{\mathbf{:}#1\mathbf{:}}

\newcommand{\eps}{\varepsilon}

\newcommand{\dd}{\mathrm d}

\newcommand\supp{\mathrm{supp}}

\definecolor{ocre}{RGB}{64,123,121}

\newcounter{item}
\numberwithin{item}{section}

\newtheorem{theorem}[item]{\sffamily Theorem}
\newtheorem{definition}[item]{\sffamily Definition}
\newtheorem{proposition}[item]{\sffamily Proposition}
\newtheorem{lemma}[item]{\sffamily Lemma}
\newtheorem{corollary}[item]{\sffamily Corollary}
\newtheorem{remark}[item]{\sffamily Remark}

\newtheorem*{theorem*}{\sffamily Theorem}
\newtheorem*{definition*}{\sffamily Definition}
\newtheorem*{proposition*}{\sffamily Proposition}
\newtheorem*{lemma*}{\sffamily Lemma}
\newtheorem*{corollary*}{\sffamily Corollary}
\newtheorem*{remark*}{\sffamily Remark}

\usepackage[explicit]{titlesec}
\titleformat{\section}{\centering\Large\bfseries}{\thesection \ --}{0.7em}{\Large\bfseries #1}
\titleformat{\subsection}{\centering\large\bfseries}{\thesubsection \ --}{0.4em}{\large\bfseries #1}
\titleformat{\subsubsection}{\centering\bfseries}{\thesubsubsection \ --}{0.4em}{\bfseries #1}

\let\emph\relax
\DeclareTextFontCommand{\emph}{\bfseries\em}

\usepackage{mathtools}
\numberwithin{equation}{section}
\mathtoolsset{showonlyrefs}

\title{\bfseries Nonlinear Schrödinger equations with spatial white noise potential on full space for $d\le 3$}
\author{Antoine MOUZARD and Immanuel ZACHHUBER}
\date{}

\begin{document}

\maketitle
\abstract{In this paper, we prove existence and uniqueness of energy solutions for nonlinear Schrödinger equations with a multiplicative white noise on $\IR^d$ with $d\le 3$. We rely on an exponential transform and conserved quantities for existence of energy solutions. Using paracontrolled calculus, we prove Strichartz inequalities which encode the dispersive properties of the solutions. This allows to obtain local well-posedness for low regularity solutions and uniqueness of energy solutions for various equations. In particular, our results are the first results of propagation without loss of both regularity and localization for such equations in full space as well as the first results on $\IR^3$ for such singular dispersive SPDEs. We are also obtain local well-posedness in two dimensions for deterministic initial data.}
% \tableofcontents
\vspace{0.5cm}

\section{Introduction}

\medskip

We study the nonlinear Schrödinger equation with multiplicative white noise
\begin{equation}\label{NLS}\tag{NLS}
i\partial_tu=-\Delta u+u\xi+|u|^{m-1}u
\end{equation}
on $\IR^d$ for $d\le3$ and $m\ge1$ and the Hartree equation
\begin{equation}\label{Hartree}\tag{Hartree}
i\partial_tu=-\Delta u+u\xi+u\cdot V_\beta*|u|^2
\end{equation}
on $\IR^3$ with nonnegative symmetric $V_\beta\in W^{\beta,1}$ for $\beta>0$.  The noise $\xi$ is a spatial Gaussian white noise with covariance $\IE[\xi(x)\xi(y)]=\delta_0(x-y)$ on $\IR^d$. The difficulty lies in the irregularity of the noise as well as its behavior at infinity since $\xi\in C_{-\delta}^{-\frac{d}{2}-\kappa}$ for any $\delta,\kappa>0$, that is growing at infinity slower than any polynomial $\langle x\rangle^\delta$ with negative Hölder regularity. In dimension $d\ge2$, the noise is too irregular for the equation to make sense by itself and its resolution involves a probabilistic renormalization procedure. The resolution of such singular dispersive SPDEs have been recently achieved following the progess on their parabolic counterparts with regularity structures \cite{Hai14} and paracontrolled calculus \cite{GIP}. For dispersive PDEs, the lack of strong regularization of the Schrödinger group in comparison to the heat semigroup makes this study very different as regularity is key for singular SPDEs.

\medskip

The first resolution of dispersive singular SPDEs was done by by Debussche and Weber \cite{DW} on $\IT^2$ for the cubic \eqref{NLS} equation based on an exponential transform with the new unknown
\begin{equation}
u=e^Xv
\end{equation}
with suitable random field $X$ following Hairer and Labbé \cite{HairerLabbe15} on the parabolic Anderson model on $\IR^2$. To solve the equation, one considers a regularization $(\xi_\eps)_{\eps>0}$ of the potential and relies on uniform bounds on the mass and energy after a renormalization procedure to construct solutions in the limit. This gives existence of solutions while one proves uniqueness with strong enough a priori bounds on the solution, again obtained via bounds on the regularized solutions uniform in $\eps>0$. This was improved in various way on $\IT^2$ by Tzvetkov and Visciglia \cite{TzvetkovVisciglia23,TzvetkovVisciglia23bis} and adapted to $\IR^2$ in different context, see \cite{ChauleurMouzard23,DRTV24,DebusscheMartin19}. The main drawback of this approach based on an exponential transform is the loss of regularity of the solution with respect to the initial data, as well as loss of spatial localization on $\IR^2$. In the compact case, an approach based on paracontrolled calculus can also be used with the construction of the Anderson Hamiltonian
\begin{equation}
\CH=-\Delta+\xi
\end{equation}
as first done by Gubinelli, Ugurcan and Zachhuber \cite{GUZ} on $\IT^d$ with $d\in\{2,3\}$. This operator can be constructed with a random domain $\CD(\CH)$, it has a compact resolvent hence a basis of eigenfunctions with bounded from below eigenvalues. This allows to define the Schrödinger group $e^{it\CH}$ and one can solve the cubic \eqref{NLS} equation with the associated mild formulation. This gives global well-posedness for initial data $u_0\in\CD(\CH)$ as well as the existence of energy solution for initial data in the form domain $\CD(\sqrt{\CH})$. Uniqueness for such equations in the case without noise requires Strichartz inequalities which encodes the dispersive regularization properties of the Schrödinger flow. For the Anderson Hamiltonian, this was obtained by the authors in the previous work \cite{MZ} on any compact surfaces, using the construction by the first author in \cite{Mouzard} based on second order paracontrolled calculus. For example, the operator satisfies the Strichartz inequalities
\begin{equation}
\|e^{it\CH}u_0\|_{L^p([0,T],L^q(M))}\lesssim\|u_0\|_{H^{\frac{1}{p}+\kappa}}
\end{equation}
for $(p,q)$ such that $\frac{1}{p}+\frac{1}{q}=\frac{1}{2}$ and any $\kappa>0$ on a compact surfaces without boundary. This allows to obtain low regularity solutions, that is local well-posedess for the cubic \eqref{NLS} equation for deterministic initial data $u_0\in H^\sigma$ with $\frac{1}{2}<\sigma<1$ from which one recovers uniqueness of energy solutions. In the deterministic case setting $\xi$ to zero, this result was obtained by Burq, Gérard and Tzvetkov \cite{BGT} without the $\kappa$ loss of derivatives which is optimal for general compact surfaces. The approach based on the Anderson Hamiltonian can also be considered via the exponential transform on compact spaces since it gives the form domain of the operator, see for example \cite{MatsudaZuijlen22,MouzardOuhabaz23}, however paracontrolled calculus seems necessary to obtain Strichartz inequalities. One of the main differences also lies in the well-posedness result obtained, exponential transform induces a loss of regularity at positive time as one can see in \cite{DW} while the use of the Anderson Hamiltonian gives a random space that is propagated by the dynamic. In full space, the study of this random operator and its spectral properties are too involved for the moment to help the study of the associated equation, for example its spectrum is $\IR$ in $\IR^d$ for $d\le3$, see \cite{HsuLabbe25,Ueki25} and references therein. 

\medskip

In this work, we prove global well-posedness of energy solutions without loss of regularity or localization on $\IR^d$ with $d\le 3$. In two dimensions, we consider \eqref{NLS} for general $m\ge2$. In three dimensions, we are restricted to the \eqref{Hartree} equation for $\beta>\frac{1}{2}$. We obtain existence for more general parameters such as \eqref{NLS} for $1\le m<6$ or \eqref{Hartree} for $\beta>0$ on $\IR^3$, the restriction is needed to prove uniqueness of solutions. Our approach being based on an exponential transform, we rely on uniform bounds with respect to $\eps>0$ of a suitable renormalized equation and compactness to obtain existence of solutions in the limit with $X$ a well-chosen random field.

\medskip 

\begin{theorem*}
For $\mu\in(0,1]$ and any initial data $u_0\in e^{-X}(L_\mu^2(\IR^d)\cap H^1(\IR^d))$, there exists energy solutions in $C\big(\IR,e^{-X}(L_\mu^2\cap H^1)\big)$ to \eqref{NLS} equation on $\IR^2$ for $m\ge1$, to \eqref{NLS} equation on $\IR^3$ for $1\le m<6$ and to \eqref{Hartree} equation on $\IR^3$ for $\beta>0$. The solution is unique for \eqref{NLS} equation on $\IR^2$ for $m\ge1$ and \eqref{Hartree} equation on $\IR^3$ for $\beta>\frac{1}{2}$.
\end{theorem*}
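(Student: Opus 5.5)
The plan is to follow the exponential transform strategy of Debussche--Weber, adapted to full space with weights. First fix the random field. On $\IR^d$ we cannot take $X=(-\Delta)^{-1}\xi$, since this has infrared divergence and poor growth at infinity. Instead we set $X=(1-\Delta)^{-1}\xi$, or a version localized in frequency, so that $-\Delta X=\xi-X$ with $X\in C^{2-\frac d2-\kappa}_{-\delta}$. For the regularized noise $\xi_\eps$ we write $u_\eps=e^{X_\eps}v_\eps$. Then $v_\eps$ solves
\begin{equation}
i\partial_t v_\eps=-\Delta v_\eps-2\nabla X_\eps\cdot\nabla v_\eps-\big(|\nabla X_\eps|^2-c_\eps\big)v_\eps+X_\eps v_\eps+e^{(m-1)X_\eps}|v_\eps|^{m-1}v_\eps ,
\end{equation}
with the analogous modification $e^{X_\eps}V_\beta*(e^{2X_\eps}|v_\eps|^2)$ in the Hartree case. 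Here $c_\eps$ is the divergent constant, logarithmic for $d=2$ and of order $\eps^{-1}$ plus a subleading divergence for $d=3$. The equation is written in the divergence form $-e^{-2X}\nabla\cdot(e^{2X}\nabla v)$, which is symmetric in $L^2(e^{2X}dx)$. The stochastic input is that $\Wick{|\nabla X_\eps|^2}=|\nabla X_\eps|^2-c_\eps$ converges in $C^{-\kappa}_{-\delta}$ for $d=2$. For $d=3$ it lies only in $C^{-1-\kappa}_{-\delta}$, and a second order transform $X+X_2$ is needed, with $X_2$ solving $(1-\Delta)X_2=\Wick{|\nabla X|^2}$ up to a renormalization; the resulting remainder terms then have regularity above $-1$.

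Next we obtain uniform a priori bounds. Mass $\int e^{2X_\eps}|v_\eps|^2$ and energy
\begin{equation}
E_\eps(v)=\frac12\int e^{2X_\eps}|\nabla v|^2-\frac12\int e^{2X_\eps}\big(\Wick{|\nabla X_\eps|^2}-X_\eps\big)|v|^2+\frac{1}{m+1}\int e^{(m+1)X_\eps}|v|^{m+1}
\end{equation}
are conserved. The indefinite term is controlled by $\eps$-uniform duality estimates in weighted Besov spaces, giving $\|v\|_{H^{1-\kappa'}}\|v\|_{H^\kappa}$. Interpolation and the smallness of the weight growth then let this term be absorbed, yielding a uniform $H^1$ bound. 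The defocusing sign and Gagliardo--Nirenberg with $m<6$ in $d=3$ handle the nonlinearity. To propagate the localization $L^2_\mu$ we compute $\frac{d}{dt}\int\langle x\rangle^{2\mu}e^{2X}|v|^2$. The commutator produces $\int\nabla\langle x\rangle^{2\mu}\cdot e^{2X}\mathrm{Im}(\bar v\nabla v)$, which is bounded by the energy times the weighted mass for $\mu\le1$. Gronwall gives growth that is at most exponential in time. Compactness (Aubin--Lions on bounded domains with weights, via a tightness argument) then yields a limit $v\in C(\IR,L^2_\mu\cap H^1)$ solving the limiting equation in the distributional sense, with energy conservation through weak lower semicontinuity and time reversibility.

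For uniqueness we take two energy solutions with the same data and estimate $w=v_1-v_2$ in $L^2(e^{2X})$. In $d=2$ the difference of nonlinearities requires an $L^1_tL^\infty_x$ bound on $v_i$. We obtain this from Strichartz estimates for the transformed operator, proved by paracontrolled calculus: write $v$ as paracontrolled by $X$, apply the flat Strichartz estimates to a suitably conjugated remainder, and treat the first order perturbations as sources on short time intervals, with losses $\kappa$ localized on compact sets using the growth $\langle x\rangle^\delta$. Combined with the Brezis--Gallouët/Yudovich-type logarithmic argument or a Strichartz-based local well-posedness in $H^\sigma$, $\sigma<1$, this gives uniqueness for all $m$. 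In $d=3$ Hartree, $\beta>\frac12$ makes $V_\beta*|u|^2$ Lipschitz-like in $H^1$, and a Strichartz bound at regularity $1-\beta+\kappa$ suffices.

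The main obstacle is the Strichartz estimate with noise on full space. The first order term $\nabla X\cdot\nabla v$ is not perturbative in $L^2$, and the weights interact with the dispersion. I would first try a local-smoothing/semiclassical time-localization on intervals of length $N^{-1}$ at frequency $N$, as in Burq--Gérard--Tzvetkov, where the paracontrolled correction is bounded.
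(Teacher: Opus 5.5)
Your overall architecture is the paper's: exponential transform, modified mass and energy, moment bound and compactness, then paracontrolled Strichartz for uniqueness. But there is a genuine gap in the choice of the random field.

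\textbf{The gap: $X$ grows at infinity.} You take $X=(1-\Delta)^{-1}\xi$, or a frequency-truncated version, in $\CC^{2-\frac d2-\kappa}_{-\delta}$. This is a stationary Gaussian field, unbounded above and below on $\IR^d$. A fixed Fourier truncation keeps it stationary, so that does not help. Hence $e^{\pm 2X}\notin L^\infty$. The quantities you conserve or propagate, $\int e^{2X}|v|^2$, $\int e^{2X}|\nabla v|^2$ and $\int\langle x\rangle^{2\mu}e^{2X}|v|^2$, are then comparable to $\|v\|_{L^2}$, $\|\nabla v\|_{L^2}$, $\|v\|_{L^2_\mu}$ only up to a factor $\langle x\rangle^{\pm\delta}$. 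Concretely:
\begin{itemize}
\item Since $\nabla v_0$ carries no weight, the initial modified energy $\int e^{2X}|\nabla v_0|^2$ need not be finite for a general $v_0\in H^1\cap L^2_\mu$. Take rapidly oscillating bumps with $\ell^2$-summable gradients near points where $X$ is large.
\item When it is finite, it only controls $\nabla v(t)$ in $L^2_{-\delta}$.
\item The weighted mass only propagates $L^2_{\mu'}$ for $\mu'<\mu$.
\end{itemize}
So your step ``yielding a uniform $H^1$ bound'' fails. The argument produces solutions with a loss of localization, as in Debussche--Martin and Debussche--Liu--Tzvetkov--Visciglia, not $u\in C(\IR,e^X(L^2_\mu\cap H^1))$, which is what the statement asserts. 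The no-loss flow bounds on $L^2_\mu\cap H^\sigma$ that feed the Strichartz argument rely on the same boundedness of $e^{\pm X}$.

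\textbf{The missing idea.} You need the spatially inhomogeneous localization of Gubinelli--Hofmanov\'a, whose frequency cutoff increases with $|x|$. Take $X=\SU_>X_1$ with $X_1=-(1-\Delta)^{-1}\xi$ in $d=2$; note the sign, which is needed so that $\xi-\Delta X_1=-X_1$. In $d=3$ take $X=\SU_>(X_1+X_2+X_3)$. By Lemma \ref{Lem:LocalizationProjectors}, $X$ has the same local singularity as $X_1$ but lies in $\CC^{2-\frac d2-2\kappa}_{+\delta}$. Hence $e^{qX}\in L^\infty$ for all $q\in\IR$ and $L^2(e^{2X}\drm x)\simeq L^2$.

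The growth is carried by the smoother $\SU_{\le}X_1$. It enters only $Y=\xi-\Delta X-\Wick{|\nabla X|^2}$, through $-\Delta\SU_{\le}X_1$ and well-defined cross terms such as $\nabla\SU_{\le}X_1\cdot\nabla X$. Since $Y$ is only paired with $|v|^2$, its growth is absorbed by $\|v\|_{L^2_\delta}$ with $\delta<\mu$; this is where $\mu>0$ is used. With this $X$, your energy, moment and compactness steps go through essentially as written.

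\textbf{Two smaller points.}
\begin{itemize}
\item The limit equation must be understood in the weighted weak form of Definition \ref{def:EnergySolutions}, tested against $\varphi\in H^1\cap L^2_\mu$ through $\langle e^{2X}\nabla v,\nabla\varphi\rangle$. It cannot be taken distributionally in unweighted form, where $\nabla X\cdot\nabla v$ is ill-defined for $v\in H^1$.
\item The plain Yudovich argument gives $y'\lesssim p^{\frac{m-1}{2}}y^{1-\frac1p}$ for $y=\|w\|_{L^2}^2$, which closes only for $m\le3$. For general $m$ you need the Strichartz route. That route is the paper's: conjugate by the paracontrolled map $\Gamma$ so that $\CH^\sharp+\Delta$ gains regularity, then slice time into intervals of length $2^{-(\frac d2-\kappa)j}$ at frequency $2^j$.
\end{itemize}
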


\medskip

Our main contribution to this argument is threefold. Firstly, we prove that with a suitable definition of energy solutions, one can avoid the loss of regularity for solutions at positive time. In particular, this applies to the previous studied framework such as $\IT^2$ by Debussche and Weber \cite{DW} to get existence of energy solutions. Secondly, we consider a new random field $X$ on full space which does not grow at infinity while still play the role of removing the singularity from local irregularity. Global strong solutions for \eqref{NLS} on $\IR^2$ for any $m\ge2$ were recently obtained in \cite{DRTV24} with a loss of both regularity and localization, improving the previous work \cite{DebusscheMartin19}. In the present work, we obtain global well-posedness without any loss. In three dimensions, the only two papers up to our knowledge are on the torus $\IT^3$, see \cite{GUZ} for \eqref{NLS} and \cite{DeVecchiJiZachhuber25} for \eqref{Hartree}. Finally, we obtain Strichartz inequalities for the linear equation on $\IR^d$ for $d\in\{2,3\}$ using paracontrolled calculus. We follow the perturbative argument of our previous work \cite{MZ} using paracontrolled calculus with a modified operator $\CH^\sharp=\Gamma^{-1}\CH\Gamma$ which is a better behaved perturbation of the Laplacian in comparison to $\CH$. The map $\Gamma$ is a random map defined in Appendix \ref{sec:construction-AH} and the only ingredient needed from paracontrolled calculus, see \cite{DebusscheMouzard24} for a similar argument for a deterministic rough potential. This allows to obtain uniqueness of energy solutions as well as local well-posedness for low regularity initial data.

\medskip 

\begin{theorem*}
For any $m\ge2$, \eqref{NLS} is locally well-posed in $L_\sigma^2(\IR^2)\cap H^\sigma(\IR^2)$ for $\sigma\in(\sigma_m,1)$ with $\sigma_m=1-\frac{1}{m-1}$. For any $\beta>\frac{1}{2}$, there exists $\sigma(\beta)<1$ such that equation \eqref{Hartree} is locally well-posed in $e^{-X}(L_\sigma^2(\IR^3)\cap \Gamma H^\sigma(\IR^3))$ for $\sigma\in(\sigma_\beta,1)$. In particular, the solution depends continuously on the initial data.
\end{theorem*}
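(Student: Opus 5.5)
The plan is to run a Strichartz-based fixed point argument for the conjugated linear flow, as in \cite{BGT} and our earlier work \cite{MZ}, but in weighted spaces on full space. The starting point is the operator $\CH^\sharp=\Gamma^{-1}\CH\Gamma$. By the construction in Appendix~\ref{sec:construction-AH}, it is a perturbation of $-\Delta$ whose error terms have positive regularity. For $d=2$ I would work directly with $u$: there $\Gamma$ is close to the identity in $H^\sigma$ for $\sigma<1$, so $\Gamma H^\sigma=H^\sigma$. For $d=3$ I would first use the exponential transform $u=e^{-X}v$ and then write $v=\Gamma w$. This accounts for the space $e^{-X}(L^2_\sigma\cap\Gamma H^\sigma)$ in the statement.

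\textbf{Step 1: linear estimates.} I would prove local-in-time Strichartz estimates for $e^{it\CH^\sharp}$ with a loss:
\[
\|e^{it\CH^\sharp}w_0\|_{L^p([0,T],L^q)}\lesssim\|w_0\|_{H^{\frac1p+\kappa}}
\]
for admissible pairs $(p,q)$. I would follow \cite{MZ}. The first ingredient is the Euclidean estimates for $e^{it\Delta}$ on frequency-localized pieces, applied on time intervals of length about $N^{-1}$ at frequency $N$, where the perturbation $\CH^\sharp+\Delta$ can be treated through Duhamel's formula. The second is a summation of these intervals, which costs $N^{1/p}$. I would also need two propagation bounds on the linear flow:
\begin{itemize}
\item preservation of $H^\sigma$, using that $\CD(\sqrt{\CH^\sharp})$ and $H^1$ are comparable and interpolating;
\item preservation of the weight $L^2_\sigma$, via the commutator $[\CH,\langle x\rangle^\sigma]=[-\Delta,\langle x\rangle^\sigma]$, which is first order with bounded coefficients and can therefore be absorbed.
\end{itemize}

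\textbf{Step 2: nonlinear contraction.} I would work in the space
\[
Y_T=C([0,T],L^2_\sigma\cap H^\sigma)\cap L^p([0,T],W^{s,q}),
\]
with $(p,q)$ chosen so that Sobolev embedding gives $L^p_TL^\infty$ control from $\sigma-\frac1p-\kappa>\frac2q$. For $m$-power NLS in $d=2$, the estimate I aim for is
\[
\||u|^{m-1}u\|_{L^1_TH^\sigma}\lesssim \|u\|_{L^{m-1}_TL^\infty}^{m-1}\|u\|_{L^\infty_TH^\sigma}.
\]
The $L^{m-1}_TL^\infty$ norm is controlled by Strichartz with $p=m-1$ as long as $\sigma>1-\frac1{m-1}=\sigma_m$, after Hölder in time produces a factor $T^\theta$. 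The weighted part is similar, since $\langle x\rangle^\sigma|u|^{m-1}u$ is bounded by $\|u\|_{L^\infty}^{m-1}\|u\|_{L^2_\sigma}$. For Hartree in $d=3$, I would bound $V_\beta*|u|^2$ in $W^{\beta',\infty}$ using $V_\beta\in W^{\beta,1}$. The nonlinearity $e^{-X}\Gamma w\cdot V_\beta*|e^{-X}\Gamma w|^2$ then has to be mapped back through $\Gamma^{-1}e^{X}$ into $H^\sigma$. This is where $\beta>\frac12$ enters: it supplies enough smoothing to compensate the $\frac1p$ Strichartz loss, and it yields some $\sigma_\beta<1$. Contraction in $Y_T$ then gives existence, uniqueness and Lipschitz dependence on the initial data.

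\textbf{Main obstacle.} I expect the hardest step to be the Strichartz estimate for $\CH^\sharp$ on $\IR^3$. There the regularity of $\xi$ is about $-\frac32$, so the perturbation $\CH^\sharp+\Delta$ is only barely better than order one. Moreover, the polynomial growth of $\xi$ at infinity forces weighted paracontrolled estimates. My first attempt would be to cut $\xi$ into dyadic spatial annuli, construct $\Gamma$ with a localization parameter chosen so that the remainder is small in weighted Besov spaces, and check that the Duhamel perturbation on intervals of length $N^{-1}$ costs only $N^{\kappa}$.
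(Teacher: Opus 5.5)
\textbf{Verdict.} Your overall architecture is the paper's: a short-time Duhamel argument against $e^{it\Delta}$ for a paracontrolled conjugate of $\CH$, then a contraction in $C_T(L^2_\sigma\cap H^\sigma)\cap L^p_TW^{s,q}$, with Gubinelli--Hofmanov\'a localization to handle the growth of the noise. In $d=2$ your numerology $\sigma>1-\frac1p$ agrees with the paper's. There are, however, two genuine gaps.

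\textbf{First gap: the Strichartz loss in $d=3$.} There $\CH^\sharp+\Delta$ is not ``barely better than order one''. Even after the second-order ansatz with $Z_1,\dots,Z_4$, the terms $\P_{\nabla X}\nabla v$ and $\PI(\nabla X,\nabla v^\sharp)$ remain, with $\nabla X\in\CC^{-\frac12-\kappa}_\delta$. So the perturbation costs $\frac32+\kappa$ derivatives (Proposition~\ref{prop:fineHcomparison} with $d=3$). On a time interval of length $N^{-1}$ at frequency $N$, the Duhamel term is therefore of size $N^{-1}N^{\frac32+\kappa}=N^{\frac12+\kappa}$, not $N^\kappa$. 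You must use intervals of length $N^{-\frac32}$, and the loss is $\frac{3}{2p}+\kappa$ (the $\frac{d}{2p}$ of Theorem~\ref{thm:Strichartz}), not $\frac1p+\kappa$.

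This matters because the Hartree step is the only place where $\beta>\frac12$ is decided, and your sketch does not contain that computation. With the correct loss and the pair $(2,6)$, data in $L^2_\sigma\cap H^\sigma$ with $\sigma$ slightly above $\frac34+\gamma$ give only $v\in L^2_TW^{\gamma,6}\hookrightarrow L^2_TL^{\frac{6}{1-2\gamma}}$, with $\gamma<\frac14$ and no $L^\infty$ bound. So you cannot bound $V_\beta*|e^Xv|^2$ through $\|V_\beta\|_{W^{\beta,1}}\||v|^2\|_{L^\infty}$. You have to trade the derivatives of $V_\beta$ for integrability. Young's inequality with $W^{\beta,1}\hookrightarrow L^{\frac{3}{3-\beta}}$ controls $\|V_\beta*|e^Xv|^2\|_{L^\infty}$ by $\|v\|_{L^{\frac{6}{1-2\gamma}}}^2$ exactly when $\beta\ge1-2\gamma$. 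The fractional Leibniz term in $W^{\sigma,\frac{3}{1+\gamma}}$ is handled the same way, putting $\beta$ derivatives on $V_\beta$. This yields $\sigma>\frac54-\frac\beta2=\sigma_\beta$, which lies below $1$ precisely when $\beta>\frac12$. With your $\frac1p$ loss the threshold would come out differently and would be unjustified.

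\textbf{Second gap: propagation of $L^2_\sigma\cap H^\sigma$ by the linear flow.} Both your Strichartz argument and your contraction rely on this bound; in the paper it enters as $\|S^\sharp_{t_n}\Delta_kv\|_{H^s}\lesssim\|\Delta_kv\|_{L^2_\mu\cap H^s}$. On $\IR^d$ the Anderson Hamiltonian is not bounded below (its spectrum is $\IR$), so there is no form domain $\CD(\sqrt{\CH^\sharp})$ comparable to $H^1$ to interpolate from.

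Concretely, for $u=e^Xv$ the energy controls $\int|\nabla v|^2e^{2X}$ only up to the pairing of $|v|^2$ with $e^{2X}Y$. Since $Y\in\CC^{1-\frac d2-\kappa}_{-\delta}$ grows at infinity, that pairing needs a weighted $L^2$ norm. Conversely, your commutator identity for $\langle x\rangle^{2\sigma}$ produces $\nabla v$. So the two bounds cannot be run separately.

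The paper closes them jointly at the level $L^2_1\cap H^1$ for the transformed flow $ie^{2X}\partial_tv=e^X\CH e^Xv$. It uses the conserved modified mass and energy, together with an iterated Gronwall argument for the weight. Here $X=\SU_>X_1$ with $e^{\pm X}\in L^\infty$ is essential, since it prevents any loss of localization between $u$ and $v$. The paper then interpolates with the $L^2$ bound on the harmonic-oscillator scale. This is why the weight and regularity exponents coincide in $L^2_\sigma\cap H^\sigma$. It is also why the Strichartz estimate must carry an $L^2_\mu$ norm on the right-hand side, which your unweighted version omits.

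\textbf{Minor point in $d=2$.} You need $p>m-1$, not $p=m-1$, to obtain the factor $T^\theta$, and admissibility forces $p>2$. For $m\in\{2,3\}$ your choice $p=m-1$ is therefore not admissible. The argument actually gives $\sigma>\max(\frac12,\sigma_m)$, which is how the paper's theorem is really stated (it assumes $\sigma\in(\frac12,1)$).
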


\medskip

As mentionned, the only results in three dimensions were obtained in \cite{GUZ,DeVecchiJiZachhuber25} on $\IT^3$. Local well-posdness for \eqref{NLS} follows from \cite{GUZ} while the authors proved global well-posedness for \eqref{Hartree} for $\beta>\frac{17}{20}$, see Sections $4$ and $5$ in \cite{DeVecchiJiZachhuber25}. We get here the improved condition $\beta>\frac{1}{2}$ with respect to \cite{DeVecchiJiZachhuber25}, this is due to the use of better dispersive properties for the deterministic Schrödinger flow $e^{it\Delta}$ on $\IR^3$ than on $\IT^3$. While this is natural to expect, this is the first result of this type for such singular dispersive PDEs. Indeed, singular stochastic PDEs are harder to solve on full space due to the invariance by translation of the noise. In order to prove the previous result, we follow a classical fixed point argument which requires to work with a space stable by the flow. This is only possible because we are able to solve the equation without loss of regularization and localization and relies crucially on our spatial localization argument. One of the main interest of the previous result is that we are able to consider deterministic initial data for \eqref{NLS} on $\IR^2$. On compact surfaces, this result is only known for low regularity initial data with our previous work \cite{MZ}, we prove here that this is also possible on $\IR^2$ given polynomial decay at infinity.

\medskip

Since the work by Debussche and Weber \cite{DW} on $\IT^2$, several works studied dispersive singular SPDEs on full space. Debussche and Martin \cite{DebusscheMartin19} adapted the argument to consider \eqref{NLS} for $m<2$ on $\IR^2$ for strong solutions with loss of regularity and localisation. This was generalised to any $m\ge1$ in \cite{DRTV24} with additional arguments from \cite{TzvetkovVisciglia23bis,TzvetkovVisciglia23} using modified energy methods. Remark that they prove global well-posedness in the sense of existence and uniquenss of a global solution, without propagation of a space of initial data for $t\in\IR$. In a different direction, Chauleur and Mouzard \cite{ChauleurMouzard23} considered a logarithmic nonlinearity which behave nicely with the exponential transform despite not being Lipschitz. More recently, we mention the recent work \cite{LiuTzvetkov2026} where the convergence of solutions on large torus to the solutions on full space is studied. In a different direction, Mackowiak considered \eqref{NLS} with an additional confining harmonic potential on $\IR^2$, see \cite{Mackowiak25,Mackowiak2025}. To the best of our knowledge, this covers all results on $\IR^2$ while no results were known on $\IR^3$.

\bigskip

We now describe the main arguments of the present work. In this context of singular dispersive SPDEs one needs to define carefully a notion of energy solutions, we consider \eqref{NLS} equation in two dimensions in this introduction for the sake of clarity. For the usual equation without noise, energy solutions are defined for $u_0\in H^1$ satisfying the equation in the dual space $(H^1)^*=H^{-1}$. In our case, $u_0\in e^X(H^1\cap L_\mu^2)$ gives a solution in the dual space of $e^X(H^1\cap L_\mu^2)$ which does not contain smooth functions. The new variable $u=e^Xv$ gives the formal equation
\begin{equation}
ie^X\partial_tv=-e^X\Delta v-2e^X\nabla X\cdot\nabla v+e^X(\xi-\Delta X-|\nabla X|^2)v+e^{(m-1)X}|v|^{m-1}v
\end{equation}
which usually motivates the choice $\Delta X=\xi$ to cancel the roughest part of the equation. This is the path followed by Debussche and Weber \cite{DW}, they considered this new equation on $v$ with $\Wick{|\nabla X|^2}$ interpreted as a Wick product since $\xi\in\CC^{-1-\kappa}(\IT^2)$ hence $X\in\CC^{1-\kappa}(\IT^2)$, this is the probabilistic renormalization of the singular SPDE. For the product term $\nabla X\cdot\nabla v$ to be well-defined, one needs regularity higher than one thus motivating them to consider strong solution with better regularity, that is $v_0\in e^XH^2(\IT^2)$. To get a notion of energy solutions, the equation should be satisfied in the dual space of $e^XH^1(\IT^2)$ hence tested against $e^X\varphi$ for $\varphi\in H^1(\IT^2)$ to get
\begin{equation}
\big\langle i\partial_t ve^X,e^X\varphi\big\rangle=\big\langle-e^X\Delta v-e^X2\nabla X\cdot\nabla v-e^X\Wick{|\nabla X|^2}v+e^{(m-1)X}|v|^{m-2}v,e^X\varphi\big\rangle
\end{equation}
as weak formulation of the equation. This amounts to a multiplication of the equation by $e^X$ which gives the equation
\begin{equation}
ie^{2X}\partial_tv=-\big(\nabla e^{2X}\nabla\big)v-e^{2X}\Wick{|\nabla X|^2}v+e^{mX}|v|^{m-2}v
\end{equation}
in $H^{-1}(\IT^2)$ where $v\in H^1(\IT^2)$ is enough for all the terms to make sense. Since $X\in\CC^{1-\kappa}(\IT^2)$, multiplying the equation by $e^{-2X}$ is a singular product which corresponds to the singular product $\nabla X\cdot\nabla v$ for $v\in H^1(\IT^2)$ as in \cite{DW}. Remark that this computation is similar to the construction of the Anderson Hamiltonian on $\IT^d$ for $d\in\{2,3\}$ via its quadratic form from \cite{MouzardOuhabaz23}. In the compact case of the torus, the energy of the equation is formally
\begin{equation}
\langle\CH u,u\rangle=\langle\CH e^Xv,e^Xv\rangle=\langle e^X\CH e^Xv,v\rangle
\end{equation}
for any $v\in H^1$ where $\CH=-\Delta+\xi$ corresponds to the Anderson Hamiltonian. Using the notation $Tv=e^{X}v$, the equation on $v$ is
\begin{equation}
i\partial_tv=T^{-1}\CH T v+T^{-1}(|T v|^{m-2}T v)
\end{equation}
where $T^{-1}u=e^{-X}u$ which involves the singular gradient product for energy solution while the equation
\begin{equation}
iT^*T \partial_tv=T^*\CH T v+T^*(|T v|^{m-2}T v)
\end{equation}
with $T^*=T$ is the equation to consider for energy solution. Since $T$ is not a unitary transform, this yields two different equations, one of our contributions is to identify the second one as a natural notion of energy solutions. For example, this allows to get existence of energy solutions using a compactness argument relying only on uniform bounds for the energy with the exponential transform without loss of derivatives in the case of the two dimensional torus with $u_0\in e^XH^1(\IT^2)$. For uniqueness, Strichartz inequalities are needed which a priori requires paracontrolled calculus. This is the strategy we follow to get global well-posedness for \eqref{NLS} and \eqref{Hartree} on $\IR^d$ with $d\in\{2,3\}$ without any loss of regularity. While we do not rely on the spectral properties of the Anderson operator in full space, we use the notation
\begin{equation}
\CH u=-\Delta u+u\xi
\end{equation}
as the computation are similar to the ones on $\IT^d$.

\medskip

On full space, there is an additional problem of growth at infinity and previous work considered localised initial data. In addition to the loss of regularity, the use of the exponential transform induces a loss of localization at positive time due to the growth at infinity of $e^X$ and $e^{-X}$. To get around this problem, we consider a new random field $X$ such that $e^X,e^{-X}\in L^\infty$ while still cancelling the roughest part of the equation. Following a decomposition introduced by Gubinelli and Hofmanovà in \cite{GubinelliHofmanova19}, one can split the white noise $\xi\in\CC_{-\delta}^{-\frac{d}{2}-\kappa}(\IR^d)$ in two terms as
\begin{equation}
\xi=\xi^\le+\xi^>
\end{equation}
with $\xi^\le\in L_{-\delta}^\infty(\IR^d)$ and $\xi^>\in\CC_\delta^{-\frac{d}{2}-\kappa}(\IR^d)$ for any $\delta,\kappa>0$ using Lemma \ref{Lem:LocalizationProjectors}. The first term encodes the growth at infinity while being more regular and the second term has the same local regularity with decay at infinity. Considering for example $\Delta X=\xi^>$ gives the equation
\begin{equation}
ie^{2X}\partial_tv=-\big(\nabla e^{2X}\nabla\big)v+e^{2X}(\xi^\le-\Wick{|\nabla X|^2})v+e^{mX}|v|^{m-2}v
\end{equation}
where the product with the growing smooth term $\xi^\le$ is not singular. The important property is that the norms of the spaces $L^2(\IR^d,e^{qX}\drm x)$ and $L^2(\IR^d,\drm x)$ are equivalent for any $q\in\IR$ thus it avoids loss of localization between $u$ and $v$. Given this new localized rough field, we adapt the argument from Debussche and Weber to the full space without any loss which give the existence of energy solutions. Note that in three dimensions, the renormalization procedure is more involved, as done on $\IT^3$ in \cite{GUZ}. To prove uniqueness, we obtain Strichartz inequalities for the linear equation
\begin{equation}
ie^{2X}\partial_tv=e^X\CH e^{X}v
\end{equation} 
with $v_0\in L_\mu^2\cap H^1$. We follow a perturbative argument using the result for the linear flow $e^{it\Delta}$, the idea being to consider a mild formulation of the linear equation as
\begin{equation}
% v_t=e^{it\Delta}v_0-i\int_0^te^{i(t-s)\Delta}(2\nabla X\cdot\nabla v_s-Yv_s)\drm s
% v_t^\sharp=e^{it\Delta}v_0^\sharp-i\int_0^te^{i(t-s)\Delta}(\CH^\sharp+\Delta)v_s^\sharp\drm s
v_t=e^{it\Delta}v_0-i\int_0^te^{i(t-s)\Delta}(e^{-X}\CH e^X+\Delta)v_s\drm s
\end{equation}
with the use of spectral projections both for the solution and its initial condition. However $e^{-X}\CH e^X+\Delta$ is not well-behaved enough for this proof to work and we need refined tools, as it was already the case on compact surfaces, see \cite{MZ}. This argument also appears in \cite{DRTV24} on $\IR^2$ where the authors obtained Strichartz inequalities for the regularized equation that blow up logarithmically as $\eps>0$ goes to $0$ which still alllows them to obtain uniqueness. This does not yield Strichartz inequalities for the solution in the limit and does not apply to $\IR^3$ as the blow up is polynomial. In this work, we use paracontrolled calculus to prove Strichartz inequalities for the limiting equation in two and three dimensions. The main tool is a random transformation $\Gamma$ such that 
\begin{equation}
\CH^\sharp=(\Gamma e^X)^{-1}\CH(\Gamma e^X)
\end{equation} 
is a better behaved perturbation of the Laplacian which was first used in \cite{GUZ}, see also \cite{EulryMouzard25,Mackowiak25,Mouzard,DeVecchiJiZachhuber25} and references therein. The mild formulation
\begin{equation}
% v_t=e^{it\Delta}v_0-i\int_0^te^{i(t-s)\Delta}(2\nabla X\cdot\nabla v_s-Yv_s)\drm s
% v_t^\sharp=e^{it\Delta}v_0^\sharp-i\int_0^te^{i(t-s)\Delta}(\CH^\sharp+\Delta)v_s^\sharp\drm s
v_t^\sharp=e^{it\Delta}v_0^\sharp-i\int_0^te^{i(t-s)\Delta}(\CH^\sharp+\Delta)v_s^\sharp\drm s
\end{equation}
with $v=\Gamma v^\sharp$ allows to obtain Strichartz inequalities, see Theorem \ref{thm:Strichartz} for a precise statement. This allows to obtain low regularity solutions using a fixed point formulation as well as uniqueness of energy solutions. While this was done on compact spaces in \cite{MZ}, there is the additionnal difficulty of growth at infinity of the potential. Indeed, the previous mild formulation can not be used a priori since the right hand side induces a arbitrary small loss of localization thus preventing the closure of the fixed point. While a mild formulation with respect to the Laplacian is crucial to obtain Strichartz inequalities for the linear equation, we consider the mild formulation associated to $\CH$ to solve the associated nonlinear equations. In particular, the linear equation
\begin{equation}
ie^{2X}\partial_tv=e^X\CH e^Xv
\end{equation}
defined a linear flow $S_t$ a priori defined on $L_1^2(\IR^d)\cap H^1(\IR^d)$ with our notion of energy solutions. Since we do not have loss of localization in our construction using energy, this allows to avoid the previous loss of localization in the mild formulation with the local well-posedness result mentioned above.

\medskip

The plan of the paper is as follows. In section \ref{Sec:propagation_1d}, we prove a propagation of regularity in one dimension for \eqref{NLS} using the new random field $X$ to avoid the loss of localization. In section \ref{Sec:existence}, we construct energy solutions with a compactness argument without loss of regularity or localization for \eqref{NLS} on $\IR^d$ for $d\in\{2,3\}$ and \eqref{Hartree} on $\IR^3$. We prove Strichartz inequalities in section \ref{Sec:StrongandStrichartz}, this relies on strong solutions for the linear equations. In section \ref{Sec:lowregsolutions}, we prove local well-posedness using a fixed point formulation as well as uniqueness of energy solutions. The needed toolbox of function spaces are recalled in Appendix \ref{Sec:FunctionalSpaces} while Appendix \ref{Sec:stochastic_bounds} gives the different bounds on stochastic objects, including renormalization in two and three dimensions with the localization in full space. The construction of $\CH$ using an exponential transform and then paracontrolled calculus is done in Appendix \ref{sec:construction-AH}.

\medskip

{\bf Acknoledgments :} The second author acknowledges funding by the Deutsche Forschungsgemeinschaft (DFG, German Research Foundation) CRC/TRR 388 Rough Analysis, Stochastic Dynamics and Related Fields Project ID 516748464.

\section{Energy solutions and propagation of regularity in 1D}\label{Sec:propagation_1d}

In one dimension, the equation is not singular and one can get global energy solutions with uniform energy bounds for the solution with regularized noise and pass to the limit. The initial data must have polynomial decay for the energy to be bounded due to the growth of the noise, we are able to propagate $H^1(\IR)\cap L_\mu^2(\IR)$ for $0<\mu\le1$. However one can not propagate strong solutions in $H^2(\IR)\cap L_\mu^2(\IR)$ because of the local irregularity of the noise. We prove that a space of well-prepared data can be propagated using the exponential transform, this was done for example in section $5$ of \cite{ChauleurMouzard23} for the stochastic logarithmic NLS equation. In particular, we already improve this result using the spatial localization defined in Appendix \ref{Sec:stochastic_bounds} from \cite{GubinelliHofmanova19}. We prove the propagation of the random space
\begin{equation}
\CD_\mu^2(\IR):=e^{X}\big(H^2(\IR)\cap L_\mu^2(\R)\big)
\end{equation}
for $0<\mu\le 1$ with no loss of localization or regularity where $X$ is a suitable Gaussian field depending on $\xi$. The main argument is to consider $u=e^Xv$ which satisfies the equation
\begin{equation}
i\partial_tv=-\Delta v-2\nabla X\cdot\nabla v+(\xi-\Delta X-|\nabla X|^2)v+e^{-(m-1)X}|v|^{m-1}v
\end{equation}
with $v_0\in H^2(\IR)\cap L_\mu^2(\IR)$. In particular, the solution belongs to $\CH^{\frac{3}{2}-\kappa}(\R)\cap L_{\mu}^2(\R)$ for any $\kappa>0$ using the regularity of $X$.

\medskip

\begin{theorem}\label{thm:WP1D}
For $0<\mu\le1$, there exists a unique solution $u \in \mathcal{C}\big(\R,H^1(\R)\cap L_{\mu}^2(\R)\big)$ to \eqref{NLS} with initial data $u_0\in H^1(\R)\cap L_{\mu}^2(\R)$. Given two solutions $u,u'$ with respective initial data $u_0,u_0'\in H^1(\R)\cap L_{\mu}^2(\R)$ and $T>0$, we have
\begin{equation}
\sup_{t\in[-T,T]}\|u(t)-u'(t)\|_{L^2(\IR)}\le C_{T,u_0,u_0'} \|u_0-u_0'\|_{L^2(\IR)}
\end{equation}
with $C_{T,u_0,u_0'}>0$. If moreover $u_0\in\CD_{\mu}^2$, then $u\in\CC(\R,\CD_\mu^2)$.
\end{theorem}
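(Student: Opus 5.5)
The proof splits into three parts: energy solutions with uniform bounds, uniqueness and $L^2$ stability, and propagation of $\CD_\mu^2$ through the exponential transform.

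\textbf{Energy solutions.} In one dimension $\xi\in\CC_{-\delta}^{-\frac12-\kappa}$, so the product $u\xi$ is defined for $u\in H^1\cap L_\mu^2$ by duality. Indeed, $|u|^2\in W^{1,1}$ with weight, and the pairing $\langle \xi,|u|^2\rangle$ is controlled by $\|u\|_{H^1}\|u\|_{L^2_\mu}$ together with a small power of $\|u\|_{H^{1}}^{1/2+\kappa}$, using that $\xi$ grows slower than $\langle x\rangle^\delta$ with $\delta<\mu$. I would regularize $\xi_\eps$ and solve the smooth equation globally. For the regularized solutions I would then prove three bounds:
\begin{itemize}
\item conservation of mass;
\item conservation of the energy $\frac12\|\nabla u\|^2+\frac12\langle\xi_\eps,|u|^2\rangle+\frac1{m+1}\|u\|_{L^{m+1}}^{m+1}$;
\item a weighted bound $\frac{d}{dt}\|\langle x\rangle^\mu u\|_{L^2}^2=2\,\mathrm{Im}\int\nabla\langle x\rangle^{2\mu}\bar u\nabla u\lesssim\|u\|_{L^2_\mu}\|u\|_{H^1}$, valid since $\mu\le1$.
\end{itemize}
Absorbing the noise term with the interpolation above and Young's inequality gives bounds on $\|u_\eps\|_{H^1\cap L^2_\mu}$ that are uniform in $\eps$ and grow at most polynomially in $T$. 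Compactness (Aubin–Lions, using the equation to bound $\partial_tu_\eps$ in $H^{-1}_{loc}$, with the weight providing tightness) then yields a limit $u\in L^\infty([-T,T],H^1\cap L^2_\mu)$ that solves the equation in $H^{-1}$. To upgrade to continuity in time, I would combine weak continuity with conservation of the limiting energy and the norm equivalence.

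\textbf{Uniqueness and $L^2$ stability.} Let $w=u-u'$. Then
\[\frac{d}{dt}\|w\|_{L^2}^2=2\,\mathrm{Im}\langle |u|^{m-1}u-|u'|^{m-1}u',w\rangle,\]
because the linear part $-\Delta+\xi$ is symmetric and $\langle \xi w,w\rangle$ is real, which can be justified in the $H^1$–$H^{-1}$ duality. Since $H^1(\IR)\subset L^\infty$, the nonlinear term is bounded by $C(\|u\|_{L^\infty}+\|u'\|_{L^\infty})^{m-1}\|w\|^2_{L^2}$. Gronwall's lemma then gives the stated bound with $C_{T,u_0,u_0'}$ depending on the $H^1$ bounds, and uniqueness follows.

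\textbf{Propagation of $\CD_\mu^2$.} Here I would use $X$ solving $\Delta X=\xi^>$, following the localization of Lemma \ref{Lem:LocalizationProjectors}. This gives $X\in \CC^{\frac32-\kappa}$ and, for a suitable cutoff, $X,\nabla X\in L^\infty$. In particular $e^{\pm X}\in L^\infty$ and $H^1$, and the weighted spaces are preserved under $v\mapsto e^{\pm X}v$. Writing $u=e^Xv$, the function $v$ satisfies
\[i\partial_tv=-\Delta v-2\nabla X\cdot\nabla v+(\xi^\le-|\nabla X|^2)v+e^{-(m-1)X}|v|^{m-1}v,\]
in which every coefficient is a function: $\nabla X\in\CC^{\frac12-\kappa}$, and $\xi^\le\in L^\infty_{-\delta}$ is smooth. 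I would propagate $H^2$ on $v$ by the standard argument of differentiating in time. Set $w=\partial_tv$, which satisfies a linear equation with the same operator plus a bounded perturbation coming from the nonlinearity. The operator $L=-\Delta-2\nabla X\cdot\nabla+\dots$ is symmetric in $L^2(e^{2X}dx)$, so the weighted $L^2$ energy estimate for $w$ closes with Gronwall's lemma. Then $\|\partial_tv\|_{L^2}$ controls $\|\Delta v\|_{L^2}$ through the equation, because $\|\nabla X\cdot\nabla v\|_{L^2}\le\|\nabla X\|_{L^\infty}\|v\|_{H^1}$. The growing term $\xi^\le v$ is the delicate part: it requires $v\in L^2_\delta$ with $\delta<\mu$, which the weighted bound already supplies. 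I would take the initial value $\partial_tv(0)$ from the equation at $v_0\in H^2\cap L^2_\mu$.

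\textbf{Main obstacle.} The hard step is making this rigorous uniformly in $\eps$, since the renormalization-free 1D setting still has $\xi^\le$ unbounded at infinity. I expect to handle it by working with $\xi_\eps$ and the correspondingly regularized $X_\eps$, deriving all three estimates ($H^1$, $L^2_\mu$, and $\partial_t v$) uniformly, and passing to the limit by the $L^2$ stability above. Continuity in $\CD^2_\mu$ would then follow from weak continuity together with continuity of the conserved modified quantities.
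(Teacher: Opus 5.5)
Your proposal is correct and follows the paper's proof essentially step by step. Existence comes from regularized mass and energy conservation, the duality--interpolation bound on $\langle\xi_\eps,|u_\eps|^2\rangle$, a weighted virial estimate and compactness. Uniqueness and $L^2$ stability come from Gronwall together with $H^1(\IR)\hookrightarrow L^\infty(\IR)$. Propagation of $\CD_\mu^2$ comes, after $u=e^Xv$, from a Gronwall estimate for $w=\partial_tv$ in $L^2(e^{2X}\drm x)$, with $\Delta v$ then read off from the equation. Your one-step weighted bound, using $|\nabla\langle x\rangle^{2\mu}|\lesssim\langle x\rangle^{2\mu-1}\le\langle x\rangle^{\mu}$ for $\mu\le1$, is in fact a slight simplification of the paper's iteration from $\eta\le\frac12$.

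The one point to make precise is the construction of $X$. Solving $\Delta X=\xi^>$ literally on $\IR$ gives a generically unbounded field: the one-dimensional Green's function grows linearly, and $\int\xi^>\neq0$ almost surely. You should instead take, as in Appendices~\ref{Sec:stochastic_bounds} and~\ref{sec:construction-AH}, $X=\SU_>(1-\Delta)^{-1}\xi$ (up to sign). This field decays, so $e^{\pm X}$ and $\nabla X$ are bounded, and it still leaves $\xi-\Delta X$ a function of arbitrarily mild polynomial growth, which is handled exactly like your $\xi^\le$ term.
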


\medskip 

\begin{proof}
Let $u_\eps$ be the solution to the regularized equation
\begin{equation}
i\partial_tu_\eps=-\Delta u_\eps+u_\eps\xi_\eps+|u_\eps|^{m-1}u_\eps
\end{equation}
with initial data $u_\eps(0)=u_0$. It has conserved mass
\begin{equation}
\int_\IR|u_\eps(t,x)|^2\drm x
\end{equation}
and energy
\begin{equation}
\CE_\eps(t)=\frac{1}{2}\int_\IR|\nabla u_\eps(t,x)|^2\drm x+\frac{1}{2}\int_\IR|u_\eps(t,x)|^2\xi_\eps(x)\drm x+\frac{1}{m+1}\int_\IR|u_\eps(t,x)|^{m+1}\drm x
\end{equation}
for any $\eps>0$. Since the equation is defocusing, this gives
\begin{equation}
\int_\IR|\nabla u_\eps(t,x)|^2\drm x\le \CE_\eps(0)-\int_\IR|u_\eps(t,x)|^2\xi_\eps(x)\drm x
\end{equation} 
hence
\begin{align}
\int_\IR|\nabla u_\eps(t,x)|^2\drm x&\lesssim \CE_\eps(0)+\|u_\eps(t)^2\|_{\CW_{2\delta}^{\frac{1}{2}+\kappa,1}}\|\xi_\eps\|_{C_{-2\delta}^{-\frac{1}{2}-\kappa}}\\
&\lesssim \CE_\eps(0)+\|u_\eps(t)\|_{H_{\delta}^{\frac{1}{2}+2\kappa}}^2\|\xi_\eps\|_{C_{-2\delta}^{-\frac{1}{2}-\kappa}}\\
&\lesssim \CE_\eps(0)+\|u_\eps(t)\|_{L_{\frac{\delta}{\frac{1}{2}-2\kappa}}^2}^{1-4\kappa}\|u_\eps(t)\|_{H^1}^{1+4\kappa}\|\xi_\eps\|_{C_{-2\delta}^{-\frac{1}{2}-\kappa}}
\end{align}
where $\CW_{2\delta}^{\frac{1}{2}+\kappa,1}=\CB_{1,1,2\delta}^{\frac{1}{2}+\kappa}$ for $\delta,\kappa>0$ arbitrary small using Lemmas \ref{interpolation_estimate_Besov}, \ref{product_estimate_Besov} and \ref{duality_estimate_Besov}. For the weighted norm, we use the equation to get
\begin{align}
\frac{\drm}{\drm t}\|u_\eps(t)\|_{L_{\eta}^2}^2&=-\text{Im}\Big(2\eta\int_\IR\frac{x}{\langle x\rangle^{2\eta-2}}u_\eps(t,x)\nabla\overline u_\eps(t,x)\drm x\Big)\\
&\lesssim\|u_\eps(t)\|_{L^2}\|\nabla u_\eps(t)\|_{L_{2\eta-1}^2}\\ 
&\lesssim\|u_0\|_{L^2}\|u_\eps(t)\|_{H^1}
\end{align}
for $2\eta-1\le0$ using the conservation of mass. We get
\begin{equation}
\sup_{t\in I}\|u_\eps(t)\|_{L_{\eta}^2}^2\lesssim\|u_0\|_{L_{\eta}^2}^2+|I|\cdot\|u_0\|_{L^2}\sup_{t\in I}\|u_\eps(t)\|_{H^1}
\end{equation}
for any time interval $I$ containing $0$ with $\eta\le\frac{1}{2}\wedge\mu$. Together with the previous energy bound, this yields
\begin{equation}
\sup_{t\in I}\int_\IR|\nabla u_\eps(t,x)|^2\drm x\lesssim \CE_\eps(0)+|I|\sup_{t\in I}\|u_\eps(t)\|_{H^1}^{\frac{3}{2}+2\kappa}
\end{equation}
using that $\xi_\eps$ in uniformly bounded in $C_{-2\delta}^{-\frac{1}{2}-\kappa}$. Since $u_0\in H^1(\IR)\cap L_\mu^2(\IR)$, the energy of the initial data $\CE_\eps(0)$ is uniformly bounded and
\begin{equation}
\sup_{\eps>0}\ \sup_{t\in I}\|u_\eps(t)\|_{H^1}<\infty
\end{equation}
for any finite interval $I\subset\IR$. Up to extraction using compactness, this yields the existence of weak solution $u\in\CC(\IR,H^1\cap L_\nu^2)$ satisfying the equation in $H^{-1}\cap L_{-\mu}^2$, see for example Ginibre and Velo \cite{GinibreVelo85} or Proposition $4.1$ in \cite{ChauleurMouzard23}. To propagate localization up to $\mu=1$, we write
\begin{align}
\frac{\drm}{\drm t}\|u_\eps(t)\|_{L_{\eta}^2}^2&=-\text{Im}\Big(2\eta\int_\IR\frac{x}{\langle x\rangle^{2\eta-2}}u_\eps(t,x)\nabla\overline u_\eps(t,x)\drm x\Big)\\
&\lesssim\|u_\eps(t)\|_{L_{\frac{1}{2}}^2}\|\nabla u_\eps(t)\|_{L_{2\eta-\frac{3}{2}}^2}\\ 
&\lesssim\|u_0\|_{L_{\frac{1}{2}}^2}\|u_\eps(t)\|_{H^1}
\end{align}
for $\eta\le\frac{3}{4}$ using that one has already propage $\eta=\frac{1}{2}$, one can go up to $\eta=1$ with iteration of this argument. Uniqueness follows directly using $H^1(\IR)\hookrightarrow L^\infty(\IR)$. Indeed, for $u$ and $u'$ two solutions with respective initial data $u_0$ and $u_0'$, we have
\begin{equation}
i\partial_tw=-\Delta w+w\xi+|u|^{m-1}u-|u'|^{m-1}u'
\end{equation}
where $w=u-u'$. We get
\begin{align}
\frac{\drm}{\drm t}\|w(t)\|_{L^2}^2&\lesssim\text{Im}\int_{\IR}\big(|u(t,x)|^{m-1}u(t,x)-|u'(t,x)|^{m-1}u'(t,x)\big)w(t,x)\drm x\\ 
&\lesssim\int_{\IR}(1+|u(t,x)|^{m-1}+|u'(t,x)|^{m-1})|w(t,x)|^2\drm x\\ 
&\lesssim\big(1+\|u(t)\|_{L^\infty}^{m-1}+\|u'(t)\|_{L^\infty}^{m-1}\big)\|w(t)\|_{L^2}^2\\
&\lesssim\big(1+\|u(t)\|_{H^1}^{m-1}+\|u'(t)\|_{H^1}^{m-1}\big)\|w(t)\|_{L^2}^2
\end{align}
which yields
\begin{equation}
\|u(t)-u'(t)\|_{L^2}^2\lesssim e^{\int_0^t\big(1+\|u(s)\|_{H^1}^{m-1}+\|u'(s)\|_{H^1}^{m-1}\big)\drm s}\|u_0-u_0'\|_{L^2}^2
\end{equation}
with Gronwall's lemma hence uniqueness and dependance on the initial data. Finally, we prove the propagation of the random space
\begin{equation}
\CD_{\mu}^2=e^X(L_{\mu}^2\cap H^2)
\end{equation}
where $X\in C_\delta^{\frac{3}{2}-\kappa}$ such that
\begin{equation}
Y=\xi-\Delta X-|\nabla X|^2\in\CC_{-\delta}^{\frac{1}{2}-\kappa}
\end{equation}
for any $\kappa,\delta>0$ and $e^{qX}\in L^\infty$ for any $q\in\IR$, see Appendix \ref{Sec:stochastic_bounds} for details on localization. Since $u(t)\in L_\mu^2$ and $e^{-X}\in L^\infty$, we have
\begin{equation}
v(t)=e^{-X}u(t)\in L_\mu^2
\end{equation}
and it only remains to prove that $v(t)=e^{-X}u(t)\in H^2$. It satisfies the transformed equation
\begin{equation}
i\partial_tv=-\Delta v-2\nabla X\cdot\nabla v+Yv+e^{(m-1)X}|v|^{m-1}v
\end{equation}
with initial data $v_0\in L_{\mu}^2\cap H^2$. The time derivative $w=\partial_tv$ satisfies the equation
\begin{equation}
i\partial_tw=-\Delta w-2\nabla X\cdot\nabla w+Yw+e^{(m-1)X}\big(\frac{m-1}{2}+1\big
)|v|^{m-1}w+e^{(m-1)X}\frac{m-1}{2}|v|^{m-3}v^2\overline{w}
\end{equation}
which gives
\begin{align}
\frac{\drm}{\drm t}\int_\IR|w(t,x)|^2e^{2X(x)}\drm x&=\frac{m-1}{2}\textrm{Im}\Big(\int_\IR|v(t,x)|^{m-3}v(t,x)^2\overline{w}(t,x)e^{(m-1)X(x)}\drm x\Big)\\ 
&\lesssim\|v^{m-1}(t)\|_{L^2}\|w(t)e^{X}\|_{L^2}\\
&\lesssim\|v(t)\|_{H^1}^{m-1}\|w(t)e^{X}\|_{L^2}
\end{align}
using that $e^{(m-2)X}\in L^\infty$. Gronwall's lemma gives
\begin{equation}
\|w(t)e^{X}\|_{L^2}\le e^{ct}\|w(0)e^{X}\|_{L^2}
\end{equation}
for a positive constant $c>0$ depending on the initial data. Using the equation on $v$ at time $t=0$, we have
\begin{equation}
iw(0)=-\Delta v_0-2\nabla X\cdot\nabla v_0+Yv_0+e^{(m-1)X}|v_0|^{m-1}v_0
\end{equation}
which belongs to $L^2$ for $v_0\in L_{\mu}^2\cap H^2$ with $\mu>0$ since $X\in C_\delta^{\frac{3}{2}-\kappa}$ and $Y\in C_{-\delta}^{\frac{1}{2}-\kappa}$. This implies that $w(t)e^X\in L^2$ for any $t>0$ thus $w(t)\in L^2$ since $e^{-X}\in L^\infty$. Again, the equation on $v$ gives
\begin{equation}
\Delta v=-i\partial_tv-2\nabla X\nabla v+Yv+e^{(m-1)X}|v|^{m-1}v
\end{equation}
hence $\Delta v(t)\in L^2$ which completes the proof of propagation of regularity for $v$.
\end{proof}

\begin{remark}
We considered here the defocusing equation but similar result also holds for subquintic focusing NLS with $m<6$ or $m=6$ with small initial data using the same interpolation argument for the $L^p$ norm between $L^2$ and $H^1$ in the conservation of energy.
\end{remark}

\section{Existence of energy solutions in higher dimensions}\label{Sec:existence}

We define a notion of energy solution for the equation
\begin{equation}\label{GeneralEquation}
i\partial_tu=-\Delta u+u\xi+f(u)
\end{equation}
on $\IR^d$ with $d\in\{2,3\}$ and $f(u)$ the nonlinearity. The initial data $u_0$ is taken in the random space 
\begin{equation}
\CD_\mu^1(\IR^d):=e^{X}\big(H^1(\IR^d)\cap L_\mu^2(\R^d)\big)
\end{equation}
for $0<\mu\le1$ and a suitable random field $X$ depending on $\xi$ and the dimension. As for $d=1$, one can consider $u=e^Xv$ and multiply the equation by $e^X$ to formally get
\begin{equation}
ie^{2X}\partial_tv=-(\nabla e^{2X}\nabla)v+e^{2X}(\xi-\Delta X-|\nabla X|^2)v-e^Xf(e^Xv)
\end{equation}
with initial data $v_0\in H^1(\IR^d)\cap L_\mu^2(\IR^d)$. For $d\in\{2,3\}$, this falls in the range of singular dispersive SPDEs and requires a probabilistic renormalization procedure. Indeed, $\nabla X$ is a distribution of Hölder regularity $\gamma<1-\frac{d}{2}$ thus its square $|\nabla X|^2$ is ill-defined. The equation makes sense for $\xi_\eps$ a smooth regularization of the noise and the renormalization procedure amounts to the construction
\begin{equation}
Y=\lim_{\eps\to 0}(\xi_\eps-\Delta X_\eps-|\nabla X_\eps|^2-c_\eps)=:\lim_{\eps\to 0}Y_\eps
\end{equation}
for a suitable random field $Y$ and $c_\eps$ a diverging constant. This idea was used on $\IT^2$ by Debussche and Weber \cite{DW} with $\Delta X=\xi$ up to the mode zero, see also \cite{ChauleurMouzard23,DRTV24,DebusscheMartin19,TzvetkovVisciglia23,TzvetkovVisciglia23bis} for similar constructions on $\IT^2$ and $\IR^2$. On full space, this choice induces a growth at infinity of the random field $Y$ which causes a loss in localization at positive time with respect to the initial data. We consider a different random field $X$ using a spatial localization, see Appendix \ref{Sec:stochastic_bounds} where we prove the convergence of $Y_\eps$ to a limit $Y$ in $\CC_{-\delta}^{\frac{2-d}{2}-\kappa}$ for any $\delta,\kappa>0$. The main idea is that one can cancel the local irregularity of the rough distributions without including the growth at infinity in the field $X$. The equation
\begin{equation}
ie^{2X_\eps}\partial_tv_\eps=-(\nabla e^{2X_\eps}\nabla)v_\eps+e^{2X_\eps}(\xi_\eps-\Delta X_\eps-|\nabla X_\eps|^2)v_\eps-e^{X_\eps}f(e^{X_\eps}v)
\end{equation}
then converges as $\eps$ goes to $0$ to
\begin{equation}
ie^{2X}\partial_tv=-(\nabla e^{2X}\nabla)v+e^{2X}Yv-e^Xf(e^Xv)
\end{equation}
where $e^{2X}Yv$ makes sense for $v\in L_\mu^2(\IR^d)\cap H^1(\IR^d)$. If one multiply the initial equation by $e^{-X}$ instead of $e^X$, this gives the formal equation
\begin{equation}
i\partial_tv=-\Delta v-2\nabla X\cdot\nabla v+Yv-e^{-X}f(e^Xv)
\end{equation}
where the term $\nabla X\cdot\nabla v$ is ill-defined for $v\in L_\mu^2(\IR^d)\cap H^1(\IR^d)$. This is the main reason for which all previous work considered strong solutions with a loss of regularization, see for example the remark following Corollary $3.2$ in \cite{DW} even on $\IT^2$. With our formulation, one can then consider weak solutions with
\begin{equation}
\big\langle ie^{2X}v_t-ie^{2X}v_0,\varphi\big\rangle=\int_0^t\big\langle-(\nabla e^{2X}\nabla)v_s+e^{2X}Yv_s+e^Xf(e^Xv_s),\varphi\big\rangle\drm s
\end{equation}
for any test function $\varphi\in H^1(\IR^d)\cap L_\mu^2(\IR^d)$. For the equation on a compact domain such as $\IT^d$, this is natural as one considers initial data in the form domain of the random Anderson operator which gives solutions of the equation in its dual, the equation is tested against functions themselves in the form domain. This is not the case on full space due to the random growth of the potential at infinity thus our notation of energy solutions depends on the choice of $X$ and the initial growth $0<\mu\le 1$.

\medskip

\begin{definition}\label{def:EnergySolutions}
A function $u\in L^\infty([0,T],\CD_\mu^1)$ is called an energy solution to \eqref{GeneralEquation} if
\begin{equation}
\big\langle ie^{2X}v(t)-ie^{2X}v_0,\varphi\big\rangle=\int_0^t\big\langle-(\nabla e^{2X}\nabla)v(s)+e^{2X}Yv(s)+e^Xf(e^Xv(s)),\varphi\big\rangle\drm s
\end{equation}
for any test function $\varphi\in H^1(\IR^d)\cap L_\mu^2(\IR^d)$ and almost all $t\in[0,T]$ where $v(t)=e^{-X}u(t)$.
\end{definition}

\medskip

For any $\eps>0$, one can consider the renormalized equation
\begin{equation}
i\partial_tu_\eps=-\Delta u_\eps+u_\eps(\xi_\eps-c_\eps)+f(u_\eps)
\end{equation}
for which existence of global energy solutions follows from classical theory, for example with a spatial trunction and compactness argument. In this section, we prove existence of energy solutions using uniform bounds on $v_\eps(t)=e^{-X_\eps}u_\eps(t)$ and a compactness argument. We consider the \eqref{NLS} equation in two and three dimensions as well as the \eqref{Hartree} equation in three dimensions.

\subsection{Existence in two dimensions}

We consider the solution $u_\eps$ to
\begin{equation}
i\partial_tu_\eps=-\Delta u_\eps+u_\eps(\xi_\eps-c_\eps)+|u_\eps|^{m-1}u_\eps
\end{equation}
with initial data $u_\eps(0)=e^{X_\eps-X}u_0\in H^1\cap L_\mu^2$ where $c_\eps$ is a logarithmic diverging constant defined in Appendix \ref{Sec:stochastic_bounds}. This choice of initial data is to have regular initial condition that converges to $u_0\in\CD_\mu^1$, this is often refered to as well-prepared initial data in this context. We consider $u_\eps(t)=e^{X_\eps}v_\eps(t)$ to get
\begin{equation}
ie^{2X_\eps}\partial_tv_\eps=-(\nabla e^{2X_\eps}\nabla) v_\eps+e^{2X_\eps}Y_\eps v_\eps+e^{(m+1)X_\eps}|v_\eps|^{m-1}v_\eps
\end{equation}
with initial data $v_\eps(0)=e^{-X}u_0\in H^1\cap L_\mu^2$ and 
\begin{equation}
Y_\eps=\xi_\eps-\Delta X_\eps-|\nabla X_\eps|^2-c_\eps
\end{equation} 
which converges to $Y$ in $C_{-\delta}^{-\kappa}(\IR^2)$ for any $\kappa,\delta>0$. Using uniform bounds on $Y_\eps$ and conservation of energy, we prove the following result with $\CD_\mu^1=e^{X}\big(H^1\cap L_\mu^2\big)$.

\medskip

\begin{theorem}
% Let $0<\mu\le 1$ and $u_0\in\CD_\mu^1$. Then there exists global energy solution in $C(\IR,\CD_\mu^1)$ to the renormalized \eqref{NLS} equation.
Let $0<\mu\le1$. For any initial data $u_0\in \CD_\mu^1$, there exists energy solutions in $\CC(\IR,\CD_\mu^1)$ to \eqref{NLS} on $\IR^2$ for $m\ge1$. For any time interval $I$ containing $0$, we have the bound
\begin{equation}
\sup_{t\in I}\|u(t)\|_{\CD_\mu^1}\lesssim(1+|I|)\|u_0\|_{\CD_\mu^1}
\end{equation}
with a constant that depends only on the noise.
\end{theorem}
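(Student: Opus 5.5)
We argue on the regularized transformed equation for $v_\eps=e^{-X_\eps}u_\eps$, derive bounds uniform in $\eps$ from conservation of mass and renormalized energy together with the localization estimate, and pass to the limit by compactness. Since $u_\eps$ solves a smooth defocusing NLS with potential $\xi_\eps-c_\eps$, both the mass $\int|u_\eps|^2=\int e^{2X_\eps}|v_\eps|^2$ and the energy are conserved. In the $v_\eps$ variable, integration by parts gives the renormalized energy
\begin{equation}
\CE_\eps(t)=\frac12\int_{\IR^2}e^{2X_\eps}|\nabla v_\eps|^2+\frac12\int_{\IR^2}e^{2X_\eps}Y_\eps|v_\eps|^2+\frac1{m+1}\int_{\IR^2}e^{(m+1)X_\eps}|v_\eps|^{m+1}.
\end{equation}
The renormalization constant $c_\eps$ is precisely what makes this quantity finite uniformly in $\eps$. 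By Appendix \ref{Sec:stochastic_bounds}, $e^{\pm X_\eps}$ are bounded in $L^\infty$ uniformly in $\eps$, $X_\eps\to X$ in a positively weighted Hölder space of regularity $1-\kappa$, and $Y_\eps$ is bounded in $\CC_{-\delta}^{-\kappa}$. Hence $\|e^{X_\eps}v\|_{L^2}\simeq\|v\|_{L^2}$ and $\int e^{2X_\eps}|\nabla v|^2\simeq\|\nabla v\|_{L^2}^2$ uniformly. The initial energy $\CE_\eps(0)$ is then uniformly controlled by $\|v_0\|_{H^1\cap L^2_\mu}$, where $v_0=e^{-X}u_0$. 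For the nonlinear term we use the Sobolev embedding $H^1(\IR^2)\subset L^{m+1}$.

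The key a priori estimate controls the singular term $\int e^{2X_\eps}Y_\eps|v_\eps|^2$. Exactly as in the proof of Theorem \ref{thm:WP1D}, we use the duality Lemma \ref{duality_estimate_Besov} with weights $\delta$ and $-\delta$, the product Lemma \ref{product_estimate_Besov}, and the interpolation Lemma \ref{interpolation_estimate_Besov}. This bounds the term by
\begin{equation}
\|e^{2X_\eps}\|_{\CC^{1-\kappa}_\delta}\|Y_\eps\|_{\CC^{-\kappa}_{-\delta}}\|v_\eps\|_{H^{2\kappa}_{\delta}}^2\lesssim\|v_\eps\|_{L^2_{c\delta}}^{2-c\kappa}\|v_\eps\|_{H^1}^{c\kappa}.
\end{equation}
This is a small power of the $H^1$ norm, so it can be absorbed, giving $\|\nabla v_\eps(t)\|_{L^2}^2\lesssim\CE_\eps(0)+\|v_\eps(t)\|_{L^2_{c\delta}}^{2}+1$ by Young's inequality.

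We expect the weighted $L^2$ bound to be the main obstacle, because the dynamics is not conservative for the weight. To obtain it, we compute $\frac{\drm}{\drm t}\int\langle x\rangle^{2\eta}|u_\eps|^2$ directly on the $u_\eps$ equation. The potential and nonlinear terms are real and drop out, leaving $\lesssim\|u_\eps\|_{L^2_{2\eta-1}}\|\nabla u_\eps\|_{L^2}$. The difficulty is that $\nabla u_\eps=e^{X_\eps}(\nabla v_\eps+v_\eps\nabla X_\eps)$, and $\nabla X_\eps$ is not uniformly bounded. We therefore rewrite the time derivative in terms of $v_\eps$ as $\text{Im}\int e^{2X_\eps}\nabla\langle x\rangle^{2\eta}\,\overline{v_\eps}\nabla v_\eps$; the $v\nabla X$ contribution is real, since it involves $|v|^2$, and vanishes. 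This leaves $\lesssim\|v_\eps\|_{L^2_{2\eta-1}}\|v_\eps\|_{H^1}$ with the weight gradient bounded. Iterating in $\eta$ up to $\mu$, as done for $d=1$, and combining with the energy bound through a continuity/bootstrap argument on $I$ gives
\begin{equation}
\sup_{t\in I}\|v_\eps(t)\|_{H^1\cap L^2_\mu}\lesssim(1+|I|)\|v_0\|_{H^1\cap L^2_\mu},
\end{equation}
up to tracking the homogeneity in the initial data as in the statement.

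Finally, the equation shows $e^{2X_\eps}\partial_tv_\eps$ is bounded in $L^\infty(I,H^{-1}\cap L^2_{-\mu})$. Multiplying by $e^{-2X_\eps}$, whose $W^{1-\kappa,\infty}$-type weighted norm is uniformly controlled, gives a bound on $\partial_tv_\eps$ in a slightly weaker negative space. Aubin--Lions, together with the compact embedding of $H^1\cap L^2_\mu$ into $L^2_{\mu'}\cap H^{s}$ for $\mu'<\mu$, $s<1$, yields a subsequence with $v_\eps\to v$ in $C(I,L^2_{\mu'}\cap H^{s})$ and weak-$*$ in $L^\infty(I,H^1\cap L^2_\mu)$. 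Each term in the weak formulation of Definition \ref{def:EnergySolutions} passes to the limit because $X_\eps\to X$ and $Y_\eps\to Y$ in the above topologies. The nonlinearity converges by strong $L^p$ convergence, using Sobolev interpolation and the exponential bounds. Time continuity with values in $H^1\cap L^2_\mu$ itself, rather than weak continuity, follows from weak continuity together with conservation of the limiting mass and energy, obtained by lower semicontinuity applied forward and backward in time.
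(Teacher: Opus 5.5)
Your overall route is the paper's: conserved mass and renormalized energy for $v_\eps=e^{-X_\eps}u_\eps$, absorption of $\int e^{2X_\eps}Y_\eps|v_\eps|^2$ by duality, product and interpolation, the weighted virial estimate in which the $|v_\eps|^2\nabla X_\eps$ part is real and drops out, iteration in the weight, and then compactness. Your signs $e^{+2X_\eps}$ are the correct ones.

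\textbf{The time-compactness step fails as written.} You bound $\partial_t v_\eps$ by multiplying $e^{2X_\eps}\partial_tv_\eps\in L^\infty(I,H^{-1})$ by $e^{-2X_\eps}$. But $e^{-2X_\eps}$ is only uniformly bounded in $\CC^{1-\kappa}$. A product $\CC^{1-\kappa}\times H^{-1}$ has regularity sum $-\kappa<0$, so its resonant part is not uniformly controlled in any space. Concretely, $e^{-2X_\eps}\nabla\cdot(e^{2X_\eps}\nabla v_\eps)=\Delta v_\eps+2\nabla X_\eps\cdot\nabla v_\eps$. The term $\nabla X_\eps\cdot\nabla v_\eps$, with $\nabla X_\eps\in\CC^{-\kappa}$ and $\nabla v_\eps\in L^2$ only, is exactly the ill-defined product flagged in the remark after this theorem, which forced earlier works to use strong solutions.

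The repair is to run Aubin--Lions on $w_\eps:=e^{2X_\eps}v_\eps$ instead.
\begin{itemize}
\item $w_\eps$ is bounded in $L^\infty(I,H^{1-\kappa}\cap L^2_\mu)$.
\item Since $X_\eps$ does not depend on time, $\partial_tw_\eps=e^{2X_\eps}\partial_tv_\eps$ is bounded in $L^\infty(I,H^{-1}_{-\delta})$.
\item Hence $w_\eps\to w$ in $\CC(I,L^2_{\mu'})$ for $\mu'<\mu$.
\item Then $v_\eps=e^{-2X_\eps}w_\eps\to e^{-2X}w$ in $\CC(I,L^2_{\mu'})$, because $e^{-2X_\eps}\to e^{-2X}$ uniformly.
\item Interpolation with the uniform $H^1$ bound upgrades this to $\CC(I,H^\sigma_{\mu'})$ for $\sigma<1$.
\end{itemize}
This matches Definition \ref{def:EnergySolutions}, where the time derivative only ever falls on $e^{2X}v$.

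\textbf{Your last step overreaches.} Lower semicontinuity gives only $\CE(v(t))\le\CE(v_0)$. Running the construction backward from $v(t)$ produces \emph{some} energy solution through $v(t)$. It coincides with $v$ only under uniqueness, which is not available at this stage; in 2D it comes later, from the Strichartz estimates. The energy also says nothing about continuity of the $L^2_\mu$ norm. What the compactness argument actually yields is $v\in L^\infty(I,H^1\cap L^2_\mu)$, weakly continuous with values there and strongly continuous in $H^\sigma_{\mu'}$. That is also all the paper's own argument provides, despite its writing $\CC(\IR,\CD_\mu^1)$.

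Finally, your homogeneity caveat cannot be removed. $\CE_\eps(0)$ contains $\|v_0\|_{L^{m+1}}^{m+1}$, so what you can prove is $\sup_{t\in I}\|u(t)\|_{\CD^1_\mu}\lesssim(1+|I|)\|u_0\|_{\CD^1_\mu}+\|e^{-X}u_0\|_{L^{m+1}}^{(m+1)/2}$, not a bound linear in $\|u_0\|_{\CD_\mu^1}$. The paper's proof silently drops this term too.
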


\medskip

\begin{proof}
The equation
\begin{equation}
i\partial_tu_\eps=-\Delta u_\eps+(\xi_\eps-c_\eps) u_\eps+|u_\eps|^{m-1}u_\eps
\end{equation}	
with $u_\eps(0)=e^{X_\eps-X}u_0\in H^1\cap L_\mu^2$ has conserved mass $\|u_\eps(t)\|_{L^2(\IR^2)}^2$ and energy
\begin{equation}
\CE_\eps(t)=\frac{1}{2}\int_{\IR^2}|\nabla u_\eps(t,x)|^2\drm x+\frac{1}{2}\int_{\IR^2}|u_\eps(t,x)|^2(\xi_\eps(x)-c_\eps)\drm x+\frac{1}{m+1}\int_{\IR^2}|u_\eps(t,x)|^{m+1}\drm x
\end{equation}
for any $\eps>0$. For $v_\eps(t)=e^{X_\eps}u_\eps(t)$ with $v_0=e^{-X}u_0\in H^1\cap L_\mu^2$, we get conserved modified mass
\begin{equation}
\int_{\IR^2}|v_\eps(t,x)|^2e^{-2X_\eps(x)}\drm x=\int_{\IR^2}|v_0|^2e^{-2X_\eps(x)}\drm x
\end{equation}
which is uniformly bounded with respect to $\eps>0$ as $e^{-2X_\eps}$ converges to $e^{-2X}$ in $L^\infty$. For the energy, we have
\begin{align}
\CE_\eps(t)=\frac{1}{2}\int_{\IR^2}&|\nabla v_\eps(t,x)|^2e^{-2X_\eps(x)}\drm x+\frac{1}{2}\int_{\IR^2}|v_\eps(t,x)|^2Y_\eps(x)e^{-2X_\eps(x)}\drm x\\
&+\frac{1}{m+1}\int_{\IR^2}|v_\eps(t,x)|^{m+1}e^{-(m+1)X_\eps(x)}\drm x
\end{align}
which is constant in time. For $v_0\in H^1\cap L_\mu^2$ with $\mu>0$, the energy $\CE_\eps(t)=\CE_\eps(0)$ is also uniformly bounded in $\eps>0$ since $Y_\eps$ converges to $Y$ in $\CC_{-\delta}^{-\kappa}(\IR^2)$ for $\delta,\kappa>0$. The equation being defocusing, we get
\begin{align}
\frac{1}{2}\int_{\IR^2}|\nabla v_\eps(t,x)|^2e^{-2X_\eps(x)}\drm x&\le\CE_\eps(0)-\frac{1}{2}\int_{\IR^2}|v_\eps(t,x)|^2Y_\eps(x)e^{-2X_\eps(x)}\drm x\\
&\lesssim\CE_\eps(0)+\|v_\eps(t)^2\|_{\CW_{2\delta}^{\kappa,1}}\|Y_\eps e^{-2X_\eps}\|_{\CC_{-2\delta}^{-\kappa}}\\
&\lesssim\CE_\eps(0)+\|v_\eps(t)\|_{H_{2\delta}^{2\kappa}}^2\\
&\lesssim\CE_\eps(0)+\|v_\eps(t)\|_{L_{\frac{2\delta}{1-2\kappa}}^2}^{2-4\kappa}\|v_\eps(t)\|_{H^1}^{4\kappa}
\end{align}
using that $Y_\eps e^{-2X_\eps}$ is uniformly bounded in $\CC_{-2\delta}^{-\kappa}$ for any $\delta,\kappa>0$ with using duality, product and interpolation of Besov spaces with Lemmas \ref{interpolation_estimate_Besov}, \ref{product_estimate_Besov} and \ref{duality_estimate_Besov}. Since $e^{2X_\eps}$ is uniformly bounded with respect to $\eps>0$ in $L^\infty$, we get
\begin{equation}
\int_{\IR^2}|\nabla v_\eps(t,x)|^2\drm x\lesssim\CE_\eps(0)+z\|v_\eps(t)\|_{L_{\frac{2\delta}{1-2\kappa}}^2}^2+z^{-\frac{1-2\kappa}{2\kappa}}c_{2\kappa}\|v_\eps(t)\|_{H^1}^2
\end{equation}
for any $z>0$ using the bound $x^{1-\theta}y^\theta\le zx+c_\theta z^{-\frac{1-\theta}{\theta}}y$ for $x,y>0$ and $c_\theta=(1-\theta)^{\frac{1-\theta}{\theta}}\theta$ with $\theta\in(0,1)$ which is a scaled version of Lemma I.3.1 from \cite{Yosida74}. It remains to bound the weighted norm. We have
\begin{align}
\frac{1}{2}\frac{\drm}{\drm t}\|v_\eps(t)e^{-X_\eps}\|_{L_{\eta}^2}^2&=\text{Re}\Big(\int_{\IR^2}\langle x\rangle^{2\eta}\partial_tv_\eps(t,x)\overline{v_\eps}(t,x)e^{-2X_\eps(x)}\drm x\Big)\\
&=\text{Im}\Big(\int_{\IR^2}\langle x\rangle^{2\eta}\big(\Delta v_\eps(t,x)-2\nabla X_\eps(x)\cdot\nabla v_\eps(t,x)\big)\overline{v_\eps}(t,x)e^{-2X_\eps(x)}\Big)\drm x\\
&=-\text{Im}\Big(\int_\IR\frac{2\eta x}{\langle x\rangle^{2\eta-2}}v_\eps(t,x)\nabla\overline v_\eps(t,x)e^{-2X_\eps(x)}\drm x\Big)\\
&\lesssim\|v_\eps(t)e^{-2X_\eps}\|_{L^2}\|\nabla v_\eps(t)\|_{L_{2\eta-1}^2}\\ 
&\lesssim\|v_0e^{-2X_\eps}\|_{L^2}\|v_\eps(t)\|_{H^1}
\end{align}
for $2\eta-1\le 0$ using the conservation of modified mass. We get
\begin{equation}
\sup_{t\in I}\|v_\eps(t)\|_{L_\eta^2}^2\lesssim\|v_0\|_{L_\eta^2}^2+|I|\cdot\|v_0\|_{L^2}\sup_{t\in I}\|v_\eps(t)\|_{H^1}
\end{equation}
for any finite interval $I$ containing $0$ with $\eta\le\mu\wedge\frac{1}{2}$ thus the energy bound with $z>0$ small enough yields
\begin{equation}
\sup_{t\in I}\int_{\IR^2}|\nabla v_\eps(t,x)|^2\drm x\lesssim\|v_0\|_{\CD_\mu^1}^2+|I|\|v_0\|_{L^2}\sup_{t\in I}\|v_\eps(t)\|_{H^1}.
\end{equation}
Indeed, one can put some weight on the gradient to get up to $0<\mu\le1$ using iterated Gronwall's lemma as in one dimensions. We get
\begin{equation}
\sup_{\eps>0}\ \sup_{t\in I}\|v_\eps(t)\|_{H^1}\lesssim(1+|I|)\|v_0\|_{\CD_\mu^1}
\end{equation}
for any finite interval $I\subset\IR$ which is uniformly bounded with respect to $\eps>0$. Using classical compactness argument as explained in the proof of Theorem \ref{thm:WP1D}, we get that there exists $v\in\CC(\IR,\CD_\mu^1)$ such that $v_\eps$ converges weakly in $H^1\cap L_\mu^2$ at almost all time $t\in\IR$ and to $v$ in $\CC(\IR,H_\delta^\sigma)$ for any $\delta<\mu$ and $\sigma<1$ up to extraction. Using this convergence in the equation
\begin{equation}
ie^{2X_\eps}\partial_tv_\eps=-\nabla e^{2X_\eps}\nabla v_\eps+e^{2X_\eps}Y_\eps v_\eps+e^{(m+1)X_\eps}|v_\eps|^{m-1}v_\eps
\end{equation}
gives that $v$ sastifies
\begin{equation}
ie^{2X}\partial_tv=-\nabla e^{2X}\nabla v+e^{2X}Y v+e^{(m+1)X}|v|^{m-1}v
\end{equation}
using also that $X_\eps$ converges to $X$ and $Y_\eps$ to $Y$ respectively in $\CC_{\delta}^{1-\kappa}$ and $\CC_{-\delta}^{-\kappa}$ hence $v$ is an energy solution in the sense of Definition \ref{def:EnergySolutions}. For example, the nonlinear term is controlled in $H_\delta^{-1}$ using the embeddings
\begin{equation}
H_\delta^\sigma(\IR^2)\hookrightarrow L_\delta^{m}(\IR^2)\hookrightarrow H_\delta^{-1}
\end{equation}
for $\sigma\ge 1-\frac{2}{m}$ and $e^{(m+1)X}\in L^\infty(\IR^2)$.
\end{proof}

\begin{remark}
In the previous proof, we consider the regularized equation
\begin{equation}
ie^{2X_\eps}\partial_tv_\eps=-\nabla e^{2X_\eps}\nabla v_\eps+e^{2X_\eps}Y_\eps v_\eps+e^{(m+1)X_\eps}|v_\eps|^{m-1}v_\eps
\end{equation}
where both sides converges in $H^{-1}(\IR^2)$. One can also consider the equation
\begin{equation}
i\partial_tv_\eps=-e^{-2X_\eps}\nabla e^{2X_\eps}\nabla v_\eps+Y_\eps v_\eps+e^{(m-1)X_\eps}|v_\eps|^{m-1}v_\eps
\end{equation}
however this does not converges a priori since $e^{2X_\eps}$ converges only in $\CC_\delta^{1-\kappa}(\IR^2)$. In two dimensions, the sum of the regularity is $-2\kappa<0$ and one can consider this equation as soon as one propagate any regularity better than $H_{\text{loc}}^1(\IR^2)$. This is the main reason that motivated Debussche and Weber to consider strong solutions in \cite{DW} and all previous works.
\end{remark}

\subsection{Existence in three dimensions}

With our notation, the computations are similar in three dimensions with different regularity exponent. The difficulty here lies in the construction of $X$ and $Y$ such as
\begin{equation}
Y=\lim_{\eps\to0}(\xi_\eps-\Delta X_\eps-|\nabla X_\eps|^2-c_\eps)
\end{equation}
in $\CC_{-\delta}^{-\frac{1}{2}-\kappa}(\IR^3)$ while $X_\eps$ converges to $X$ in $\CC_{\delta}^{\frac{1}{2}-\kappa}$ for any $\delta,\kappa>0$, this is done in Appendix \ref{Sec:stochastic_bounds}. We also consider the solution $u_\eps$ to
\begin{equation}
i\partial_tu_\eps=-\Delta u_\eps+u_\eps(\xi_\eps-c_\eps)+|u_\eps|^{m-1}u_\eps
\end{equation}
with well-prepared initial data $u_\eps(0)=e^{X_\eps-X}u_0\in H^1\cap L_\mu^2$. For $u_\eps=e^{X_\eps}v_\eps$, we get
\begin{equation}
ie^{2X_\eps}\partial_tv_\eps=-\nabla e^{2X_\eps}\nabla v_\eps+e^{2X_\eps}Y_\eps v_\eps+e^{(m+1)X_\eps}|v_\eps|^{m-1}v_\eps
\end{equation}
with initial data $v_\eps(0)=v_0=e^{-X}u_0\in H^1\cap L_\mu^2$. Using uniform bounds on $Y_\eps$ in $\CC_{-\delta}^{-\frac{1}{2}-\kappa}(\IR^2)$ and conservation of energy, we prove the following result with $\CD_\mu^1=e^{X}\big(H^1\cap L_\mu^2\big)$ with a condition on $m\ge1$ due to the regularity of $Y$ and the Besov embedding in three dimensions.

\medskip

\begin{theorem}
Let $0<\mu\le1$. For any initial data $u_0\in \CD_\mu^1$, there exists energy solutions in $\CC(\IR,\CD_\mu^1)$ to \eqref{NLS} on $\IR^3$ for $1\le m<6$. For any time interval $I$ containing $0$, we have the bound
\begin{equation}
\sup_{t\in I}\|u(t)\|_{\CD_\mu^1}\lesssim(1+|I|)\|u_0\|_{\CD_\mu^1}
\end{equation}
with a constant that depends only on the noise.
\end{theorem}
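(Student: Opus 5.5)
We follow the two-dimensional argument, working with the renormalized regularized solution $u_\eps$ and $v_\eps=e^{-X_\eps}u_\eps$, now with $Y_\eps$ bounded only in $\CC_{-\delta}^{-\frac12-\kappa}(\IR^3)$ and $X_\eps$ in $\CC_\delta^{\frac12-\kappa}$. The structure is unchanged. First, conservation of the modified mass $\int|v_\eps|^2e^{2X_\eps}$ gives a uniform $L^2$ bound, because $e^{\pm 2X_\eps}$ is uniformly bounded in $L^\infty$ by the construction of Appendix \ref{Sec:stochastic_bounds}. Second, conservation of the energy
\begin{equation}
\CE_\eps=\frac12\int|\nabla v_\eps|^2e^{2X_\eps}+\frac12\int|v_\eps|^2Y_\eps e^{2X_\eps}+\frac1{m+1}\int|v_\eps|^{m+1}e^{(m+1)X_\eps}
\end{equation}
gives a bound on the gradient. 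Third, a weighted mass estimate propagates localization. Finally, compactness lets us pass to the limit in the weak formulation of Definition \ref{def:EnergySolutions}.

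We first need $\CE_\eps(0)$ uniformly bounded. The quadratic term at time $0$ is handled like the term at time $t$ below. For the potential energy we use $H^1(\IR^3)\hookrightarrow L^{6}$, so $\|v_0\|_{L^{m+1}}$ is finite as soon as $m+1\le6$; the hypothesis $m<6$ covers this with room to spare. Since the equation is defocusing, the potential energy is then dropped at time $t$.

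The main step is the $Y$-term, and this is where the dimension matters. By duality, product and interpolation (Lemmas \ref{duality_estimate_Besov}, \ref{product_estimate_Besov}, \ref{interpolation_estimate_Besov}),
\begin{equation}
\Big|\int|v_\eps|^2Y_\eps e^{2X_\eps}\Big|\lesssim\|v_\eps^2\|_{\CW^{\frac12+\kappa,1}_{2\delta}}\|Y_\eps e^{2X_\eps}\|_{\CC^{-\frac12-\kappa}_{-2\delta}}\lesssim\|v_\eps\|^2_{H^{\frac12+2\kappa}_{\delta}}\lesssim\|v_\eps\|_{L^2_{\delta'}}^{1-4\kappa}\|v_\eps\|_{H^1}^{1+4\kappa}.
\end{equation}
This requires $Y_\eps e^{2X_\eps}$ to be uniformly bounded in $\CC^{-\frac12-\kappa}_{-2\delta}$. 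That product has total regularity $(\frac12-\kappa)+(-\frac12-\kappa)<0$, so it is genuinely singular. I expect this to be the main obstacle. It must come from Appendix \ref{Sec:stochastic_bounds}, either through a paraproduct decomposition in which the resonant term $Y_\eps\circ e^{2X_\eps}$ is renormalized or controlled stochastically, or, more simply, by writing $Y_\eps e^{2X_\eps}$ as a divergence-type expression using $\Delta X_\eps$. The route I would try first is to expand $e^{2X_\eps}$ paracontrolled by $X_\eps$, so that only the resonant product $\Delta X_\eps\circ X_\eps$-type terms appear, which are the same ones already controlled in the construction of $Y$.

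Because the exponent $1+4\kappa$ is strictly less than $2$, Young's inequality in the form $x^{1-\theta}y^\theta\le zx+c_\theta z^{-\frac{1-\theta}{\theta}}y$ absorbs the $H^1$ factor. Since $e^{-2X_\eps}\in L^\infty$ uniformly, this gives
\begin{equation}
\|\nabla v_\eps(t)\|_{L^2}^2\lesssim\CE_\eps(0)+C\|v_\eps(t)\|^2_{L^2_{\delta'}}.
\end{equation}
The weighted mass is estimated exactly as in two dimensions. Differentiating $\|v_\eps e^{X_\eps}\|^2_{L^2_\eta}$ and using the equation, the singular terms cancel inside the imaginary part and leave
\begin{equation}
\frac{\drm}{\drm t}\|v_\eps e^{X_\eps}\|^2_{L^2_\eta}\lesssim\|v_0\|_{L^2}\|v_\eps\|_{H^1}\quad\text{for }\eta\le\tfrac12 .
\end{equation}
Iterating, as in Theorem \ref{thm:WP1D}, raises $\eta$ up to $\mu\le1$. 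Combining the two bounds yields $\sup_{\eps}\sup_{t\in I}\|v_\eps(t)\|_{H^1\cap L^2_\mu}\lesssim(1+|I|)\|v_0\|_{H^1\cap L^2_\mu}$.

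By Aubin–Lions-type compactness, we extract a subsequence with $v_\eps\to v$ in $\CC(I,H^\sigma_{\delta})$ for $\sigma<1$ and $\delta<\mu$, together with weak-$*$ convergence in $L^\infty(I,H^1\cap L^2_\mu)$. We then pass to the limit term by term in the tested equation. The divergence term $\nabla e^{2X_\eps}\nabla v_\eps$ converges in $H^{-1}$. The term $e^{2X_\eps}Y_\eps v_\eps$ converges by the same product estimate as above, using strong convergence in $H^{\frac12+2\kappa}_\delta$. For the nonlinearity we need $H^\sigma_\delta\hookrightarrow L^{m}_\delta\hookrightarrow H^{-1}_\delta$ in three dimensions, and it is here that $m<6$ is used: this requires $\frac{2m}{m+1}\ge\frac65$ together with $\sigma<1$ close to $1$ with $m\le\frac{6}{3-2\sigma}$. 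Continuity in time with values in $\CD^1_\mu$ follows from the weak continuity combined with the conserved quantities in the limit.
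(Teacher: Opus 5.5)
Your outline follows the paper's proof essentially step for step: modified mass, modified energy with the $Y_\eps$-term absorbed by interpolation and Young's inequality, weighted mass through the cancellation of $|v_\eps|^2\nabla X_\eps$ in the imaginary part, and compactness. On the product you single out as the main obstacle you are at the same level as the paper. The paper simply asserts that $Y_\eps$ times the exponential weight is uniformly bounded in $\CC^{-\frac12-\kappa}_{-2\delta}$. Your remark that the regularities add up to $-2\kappa$, so that this bound must come from the stochastic construction, is correct.

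The genuine gap is the range $5<m<6$. Your sentence ``$\|v_0\|_{L^{m+1}}$ is finite as soon as $m+1\le6$; the hypothesis $m<6$ covers this'' is an arithmetic slip, since $m<6$ only gives $m+1<7$. For $5<m<6$, take $v_0=|x|^{-a}\chi(x)$ with $\chi$ a smooth compactly supported cutoff equal to $1$ near $0$ and $\frac{3}{m+1}<a<\frac12$. Then $v_0\in H^1(\IR^3)\cap L^2_\mu$ but $\int|v_0|^{m+1}\drm x=\infty$. Since $e^{(m+1)X_\eps}$ is bounded below, $\CE_\eps(0)=+\infty$ and the a priori bound never starts. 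The same obstruction reappears when you pass to the limit. Pairing $e^{(m+1)X}|v|^{m-1}v$ with $\varphi\in H^1\subset L^6$ needs $|v|^{m-1}v\in H^{-1}$. For general $v\in H^1(\IR^3)$ this holds only through $|v|^m\in L^{6/5}$, i.e. $v\in L^{6m/5}$, which forces $m\le5$. For $m<5$ your strong convergence in $L^p_{\mathrm{loc}}$, $p<6$, suffices; for $m=5$ use boundedness in $L^{6/5}$ plus a.e. convergence. Your condition $\frac{2m}{m+1}\ge\frac65$ is not the relevant one. So your argument proves the theorem only for $1\le m\le5$. For $m>5$ the equation is $\dot H^1$-supercritical, and no argument based on finite energy of $H^1$ data can reach it.

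You have inherited this from the paper rather than derived it. The paper's proof asserts that the energy is finite at $t=0$ for $v_0\in H^1\cap L^2_\mu$ and justifies $m<6$ by ``$H^\sigma_\delta\hookrightarrow L^p_\delta$ for $p<6$, hence the restriction''. That is the right threshold for the nonlinearity $|u|^{m-2}u$ (potential energy $\frac1m\int|u|^m$) written in the introduction. It is not right for $|u|^{m-1}u$ as in \eqref{NLS}, with energy $\frac1{m+1}\int|u|^{m+1}$, where it should read $m\le5$.

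Two smaller points, also glossed over in the paper. First, the final bound cannot be linear in $\|v_0\|_{H^1\cap L^2_\mu}$ with a constant depending only on the noise. $\CE_\eps(0)$ contains $\frac1{m+1}\int|v_0|^{m+1}e^{(m+1)X_\eps}$, so unless $m=1$ the constant also depends on the size of the data. Second, compactness gives only weak continuity in $H^1\cap L^2_\mu$, together with $\CE(t)\le\CE(0)$ by lower semicontinuity. Invoking ``conserved quantities in the limit'' to upgrade to $\CC(\IR,\CD^1_\mu)$ requires energy equality. That would need an extra input such as uniqueness, which is not available here for \eqref{NLS} on $\IR^3$.
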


\medskip

\begin{proof}
We follow the proof for the same result on $\IR^2$ and emphasize the difference in the regularity exponents. We have the same convervation of modified mass
\begin{equation}
\int_{\IR^3}|v_\eps(t,x)|^2e^{-2X_\eps(x)}\drm x=\int_{\IR^3}|v_0|^2e^{-2X_\eps(x)}\drm x
\end{equation}
and modified energy
\begin{align}
\CE_\eps(t)=\frac{1}{2}\int_{\IR^3}&|\nabla v_\eps(t,x)|^2e^{-2X_\eps(x)}\drm x+\frac{1}{2}\int_{\IR^3}|v_\eps(t,x)|^2Y_\eps(x)e^{-2X_\eps(x)}\drm x\\
&+\frac{1}{m+1}\int_{\IR^3}|v_\eps(t,x)|^{m+1}e^{-(m+1)X_\eps(x)}\drm x
\end{align}
which are finite at $t=0$ since $v_0\in H^1\cap L_\mu^2$ with $\mu>0$. In particular, $Y_\eps$ is uniformly bounded in $\CC_{-\delta}^{-\frac{1}{2}-\kappa}$ hence $v\in H^1$ is enough for all the term to be well-defined. We get
\begin{align}
\frac{1}{2}\int_{\IR^3}|\nabla v_\eps(t,x)|^2e^{-2X_\eps(x)}\drm x&\le\CE_\eps(0)-\frac{1}{2}\int_{\IR^3}|v_\eps(t,x)|^2Y_\eps(x)e^{-2X_\eps(x)}\drm x\\
&\lesssim\CE_\eps(0)+\|v_\eps(t)^2\|_{\CW_{2\delta}^{\frac{1}{2}+\kappa,1}}\|e^{-2X_\eps}Y_\eps\|_{\CC_{-2\delta}^{-\frac{1}{2}-\kappa}}\\
&\lesssim\CE_\eps(0)+\|v_\eps(t)\|_{H_{2\delta}^{\frac{1}{2}+2\kappa}}^2\\
&\lesssim\CE_\eps(0)+\|v_\eps(t)\|_{L_{\frac{4\delta}{1-4\kappa}}^2}^{1-4\kappa}\|v_\eps(t)\|_{H^1}^{1+4\kappa}
\end{align}
using that $e^{-2X_\eps}Y_\eps$ is uniformly bounded in $\CC_{-2\delta}^{-\frac{1}{2}-\kappa}$ for any $\delta,\kappa>0$ with Lemmas \ref{interpolation_estimate_Besov}, \ref{product_estimate_Besov} and \ref{duality_estimate_Besov}. Since $e^{2X_\eps}$ is uniformly bounded with respect to $\eps>0$ in $L^\infty$, we get
\begin{equation}
\int_{\IR^3}|\nabla v_\eps(t,x)|^2\drm x\lesssim\CE_\eps(0)+z\|v_\eps(t)\|_{L_{\frac{4\delta}{1-4\kappa}}^2}^2+z^{-\frac{1-4\kappa}{1+4\kappa}}\|v_\eps(t)\|_{H^1}^2
\end{equation}
for any $z>0$ as in two dimensions and it remains to bound the weighted norm. We have
\begin{equation}
\frac{\drm}{\drm t}\|v_\eps(t)e^{-X_\eps}\|_{L_{\eta}^2}^2\lesssim\|v_0e^{-2X_\eps(x)}\|_{L^2}\|v_\eps(t)\|_{H^1}
\end{equation}
for $2\eta-1\le 0$ using the conservation of modified mass hence
\begin{equation}
\sup_{t\in I}\|v_\eps(t)\|_{L_\eta^2}^2\lesssim\|v_0\|_{L_\eta^2}^2+|I|\sup_{t\in I}\|v_\eps(t)\|_{H^1}
\end{equation}
for any finite interval $I$ containing $0$ with $\eta\le\mu\wedge\frac{1}{2}$. Again, the energy bound then yields
\begin{equation}
\sup_{t\in I}\int_{\IR^3}|\nabla v_\eps(t,x)|^2\drm x\lesssim\|v_0\|_{\CD_\mu^1}+|I|\sup_{t\in I}\|v_\eps(t)\|_{H^1}^{\frac{3}{2}+2\kappa}
\end{equation}
which gives the bound
\begin{equation}
\sup_{\eps>0}\ \sup_{t\in I}\|v_\eps(t)\|_{H^1}\lesssim(1+|I|)\|v_0\|_{\CD_\mu^1}
\end{equation}
for any finite interval $I\subset\IR$ uniform with respect to $\eps>0$. The compactness argument gives a solution $v\in C(\IR,\CD_\mu^1)$ such that $v_\eps$ converges to $v$ in $L^\infty(\IR,H_\delta^\sigma)$ for any $\delta<\mu$ and $\sigma<1$ up to extraction. It is a solution to
\begin{equation}
ie^{2X}\partial_tv=-\nabla e^{2X}\nabla v+e^{2X}Y v+e^{(m+1)X}|v|^{m-1}v
\end{equation}
using also that $X_\eps$ converges to $X$ and $Y_\eps$ to $Y$ respectively in $\CC_{\delta}^{\frac{1}{2}-\kappa}$ and $\CC_{-\frac{1}{2}-\delta}^{-\kappa}$ hence $v$ is an energy solution in the sense of Definition \ref{def:EnergySolutions}. The main difference here is the control of the nonlinear term which imposes a restriction on $m\ge1$. In three dimension, we have the embeddings
\begin{equation}
H_\delta^\sigma(\IR^3)\hookrightarrow L_\delta^p(\IR^3)
\end{equation}
for $\sigma\ge \frac{3}{2}-\frac{3}{p}$, that is $p\le\frac{6}{3-2\sigma}$.	Since $\sigma<1$, we can control $L_\delta^p(\IR^3)$ with $p<6$ hence the restriction. 
\end{proof}

We now prove a similar theorem for \eqref{Hartree} equation.

\medskip

\begin{theorem}
Let $0<\mu\le1$. For any initial data $u_0\in \CD_\mu^1$, there exists energy solutions in $\CC(\IR,\CD_\mu^1)$ to \eqref{Hartree} on $\IR^3$ for $\beta>0$.
\end{theorem}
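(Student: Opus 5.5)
We follow the proof of the previous theorem for \eqref{NLS} on $\IR^3$ and only replace the power nonlinearity by the Hartree term. We consider the regularized renormalized equation
\begin{equation}
i\partial_tu_\eps=-\Delta u_\eps+u_\eps(\xi_\eps-c_\eps)+u_\eps\cdot V_\beta*|u_\eps|^2
\end{equation}
with well-prepared data $u_\eps(0)=e^{X_\eps-X}u_0$. Global well-posedness at fixed $\eps>0$ is classical. After the change of unknown $u_\eps=e^{X_\eps}v_\eps$, we get
\begin{equation}
ie^{2X_\eps}\partial_tv_\eps=-\nabla e^{2X_\eps}\nabla v_\eps+e^{2X_\eps}Y_\eps v_\eps+e^{2X_\eps}v_\eps\cdot V_\beta*\big(e^{2X_\eps}|v_\eps|^2\big).
\end{equation}
The modified mass is conserved exactly as before. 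The modified energy now has the quartic term
\begin{equation}
\frac14\int_{\IR^3}\big(V_\beta*(e^{2X_\eps}|v_\eps|^2)\big)e^{2X_\eps}|v_\eps|^2\,\drm x ,
\end{equation}
which equals $\frac14\int (V_\beta*|u_\eps|^2)|u_\eps|^2$. This term is nonnegative because $V_\beta\ge0$, so the equation is defocusing.

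For the a priori bounds, we first check that $\CE_\eps(0)$ is uniformly bounded. By Young's inequality, the quartic term is bounded by $\|V_\beta\|_{L^1}\|u_\eps\|_{L^4}^4\lesssim\|v_0\|_{H^1}^4$, using $H^1(\IR^3)\hookrightarrow L^4$ and $e^{\pm X_\eps}$ uniformly bounded in $L^\infty$. We then drop the nonnegative quartic term. The three-dimensional argument (duality and product estimates, the interpolation $\|v\|_{H^{1/2+2\kappa}_{2\delta}}^2\lesssim\|v\|_{L^2_\cdot}^{1-4\kappa}\|v\|_{H^1}^{1+4\kappa}$, Young's inequality with parameter $z$, and the weighted $L^2_\eta$ Gronwall iteration up to $\mu$) applies verbatim. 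It gives
\begin{equation}
\sup_{\eps>0}\sup_{t\in I}\|v_\eps(t)\|_{H^1\cap L_\mu^2}\lesssim_{|I|}1
\end{equation}
for any finite interval $I$. The weighted-norm computation is unaffected, since the Hartree term is a real potential times $v_\eps$ and drops out of the imaginary part.

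For compactness, we extract $v_\eps\to v$ in $\CC(I,H^\sigma_\delta)$ for $\sigma<1$ and $\delta<\mu$, with weak-$*$ convergence in $L^\infty(I,H^1\cap L^2_\mu)$, as in the proof of Theorem \ref{thm:WP1D}. The linear terms pass to the limit as before, since $X_\eps\to X$ and $Y_\eps\to Y$ in the stated spaces.

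The main point is the nonlinear term tested against $\varphi\in H^1\cap L^2_\mu$. We use
\begin{equation}
\|V_\beta*(fg)\|_{L^\infty}\le\|V_\beta\|_{L^\infty}\|f\|_{L^2}\|g\|_{L^2}\quad\text{or}\quad\|V_\beta*(fg)\|_{L^{3}}\le\|V_\beta\|_{L^1}\|fg\|_{L^3},
\end{equation}
with $V_\beta\in W^{\beta,1}\subset L^1$ being enough. Since $H^\sigma\hookrightarrow L^p$ for $p<6$ when $\sigma$ is close to $1$, the trilinear form
\begin{equation}
\int e^{2X_\eps}v_\eps\,\big(V_\beta*(e^{2X_\eps}|v_\eps|^2)\big)\,\overline\varphi
\end{equation}
is controlled in $L^{p_1}\times L^{p_2}\times L^{p_3}$ with exponents below $6$. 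By Hölder and Young, this needs $\|V*(|v|^2)\|_{L^3}\lesssim\|v\|_{L^{3}}^2$ and $v,\varphi\in L^3$. Strong convergence of $v_\eps$ in $L^p_{loc}$ together with uniform $L^2_\mu$ tails, and convergence of $e^{2X_\eps}$ in $L^\infty$, then give convergence of the nonlinear term.

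Only $V_\beta\in L^1$ and $V_\beta\ge0$ are used, which is why any $\beta>0$ suffices. The only mildly delicate step is the tail control at infinity, which is handled by the uniform weighted bound in $L^2_\mu$. Continuity in time into $\CD^1_\mu$ follows as in the previous theorems, and yields $u=e^Xv\in\CC(\IR,\CD_\mu^1)$ as an energy solution in the sense of Definition \ref{def:EnergySolutions}.
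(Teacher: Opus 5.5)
Your proposal is correct and follows essentially the same route as the paper. You drop the nonnegative Hartree energy term using $V_\beta\ge0$ (the symmetry of $V_\beta$, which you use implicitly, is what gives energy conservation), bound its value at $t=0$ uniformly in $\eps$ via Young's convolution inequality with $V_\beta\in W^{\beta,1}\subset L^1$ and Sobolev, and then reuse the three-dimensional a priori bounds and compactness verbatim; you also spell out the limit of the nonlinear term, which the paper leaves implicit.

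One slip in that limit step: $\|V_\beta*|v|^2\|_{L^3}\le\|V_\beta\|_{L^1}\|v\|_{L^6}^2$, not $\|v\|_{L^3}^2$, and $L^6$ is the endpoint you cannot reach with $\sigma<1$. Use instead $\|V_\beta*|v|^2\|_{L^2}\le\|V_\beta\|_{L^1}\|v\|_{L^4}^2$ with $v,\varphi\in L^4$, which $H^\sigma$ provides for $\sigma\ge\frac34$. Also drop the $L^\infty$ alternative, since $W^{\beta,1}\not\subset L^\infty$ for small $\beta$.
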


\medskip

\begin{proof}
The difference with the previous proof is the control of the nonlinearity
\begin{equation}
u_\eps\cdot V_\beta*|u_\eps|^2
\end{equation}
for $\eps>0$. The energy is
\begin{equation}
\CE_\eps(t)=\frac{1}{2}\int_{\IR^3}|\nabla u_\eps(t,x)|^2\drm x+\frac{1}{2}\int_{\IR^3}|u_\eps(t,x)|^2(\xi_\eps(x)-c_\eps)\drm x+\frac{1}{4}\int_{\IR^3}|u_\eps(t,x)|^2(V_\beta*|u_\eps(t)|^2)(x)\drm x
\end{equation}
where $V$ is symmetric thus the only difference in the modified energy is the term
\begin{equation}
\int_{\IR^3}|v_\eps(t,x)|^2(V_\beta*|e^{-2X_\eps}v_\eps(t)|^2)(x)e^{-2X_\eps(x)}\drm x
\end{equation}
which is nonnegative since $V\ge0$. Following the previous proof, we only need to bound this at $t=0$ using $v_0\in H^1(\IR^3)\hookrightarrow L^6(\IR^3)$. Using $e^{-2X_\eps}\in L^\infty$ is uniformly bounded, we have
\begin{align}
\|V_\beta*|e^{-2X_\eps}v_0|^2\|_{L^3(\IR^3)}&\lesssim\|V_\beta\|_{L^1(\IR^3)}\|e^{-2X_\eps}v_0|^2\|_{L^3(\IR^3)}\\
&\lesssim\|V_\beta\|_{W^{1,\beta}}\|v_0\|_{L^6(\IR^d)}
\end{align}
using Young inequality for convolution and $W^{1,\beta}(\IR^3)\hookrightarrow L^1(\IR^3)$ for any $\beta>0$. We get
\begin{align}
\int_{\IR^3}|v_0|^2(V_\beta*|e^{-2X_\eps}v_0|^2)(x)e^{-2X_\eps(x)}\drm x&\lesssim\|v_0\|_{L^{\frac{3}{2}}}^2\|V_\beta*|e^{-2X_\eps}v_0|^2\|_{L^3(\IR^3)}\\
&\lesssim\|v_0\|_{H^1}^3\|V_\beta\|_{W^{1,\beta}}
\end{align}
which completes the proof.
\end{proof}

\section{Strong solutions and Strichartz inequalities}\label{Sec:StrongandStrichartz}

This section deals with the linear equation, propagation of regularity for strong solutions is obtained which is then used in our proof of Strichartz inequalities.

\subsection{Propagation of regularity}

The previous construction also gives existence of solutions to the linear equation
\begin{equation}
i\partial_tu=\CH u
\end{equation}
interpreted as
\begin{equation}\label{LinearEquation}
ie^{2X}\partial_tv=e^X\CH e^Xv
\end{equation}
with the new variable $u=e^Xv$ for $v_0\in L_\mu^2(\IR^d)\cap H^1(\IR^d)$ for $0<\mu\le 1$. For the linear equation, one can directly obtain uniqueness which gives a construction of the associated linear flow.

\medskip

\begin{proposition}
Let $0<\mu\le1$. There exists a unique solution in $\CC(\IR,L_\mu^2(\IR^d)\cap H^1(\IR^d))$ to the linear equation \eqref{LinearEquation} for $v_0\in L_\mu^2(\IR^d)\cap H^1(\IR^d)$. Denoting as $S_t$ the associated flow, it satisfies
\begin{equation}
\|S_tv_0\|_{L_{\sigma}^2(\IR^d)\cap H^\sigma(\IR^d)}\lesssim\|v_0\|_{L_{\sigma}^2(\IR^d)\cap H^\sigma(\IR^d)}
\end{equation}
for any $\sigma\in[0,1]$. 
\end{proposition}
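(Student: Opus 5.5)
Existence is inherited from the compactness argument of Section \ref{Sec:existence} with the nonlinearity removed. The linear regularized problem
\begin{equation}
ie^{2X_\eps}\partial_tv_\eps=-(\nabla e^{2X_\eps}\nabla)v_\eps+e^{2X_\eps}Y_\eps v_\eps
\end{equation}
conserves both the modified mass $\int|v_\eps|^2e^{2X_\eps}$ and the modified energy. As in the two- and three-dimensional proofs, these give a uniform bound $\sup_{t\in I}\|v_\eps(t)\|_{H^1\cap L_\mu^2}\lesssim(1+|I|)\|v_0\|_{H^1\cap L_\mu^2}$. The key ingredients are the duality/product/interpolation lemmas for the term $\int|v_\eps|^2Y_\eps e^{2X_\eps}$, absorbed with the Young-type inequality, and the iterated weighted-mass estimate. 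Passing to the limit yields a solution in $\CC(\IR,L_\mu^2\cap H^1)$ satisfying \eqref{LinearEquation} in the energy sense.

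For uniqueness, linearity reduces the question to showing that an energy solution $w$ with $w_0=0$ vanishes. Since $w(t)\in H^1\cap L_\mu^2$ is an admissible test function in Definition \ref{def:EnergySolutions}, I would test the equation against $w$ itself. The operator $-(\nabla e^{2X}\nabla)+e^{2X}Y$ is symmetric as a quadratic form on $H^1\cap L_\mu^2$, so the pairing $\langle\cdot\,w,w\rangle$ is real. Taking imaginary parts gives $\frac{\drm}{\drm t}\int|w|^2e^{2X}=0$.

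To make this rigorous when only the weak formulation is available, I would mollify in time (Steklov averages) or use the Lions–Magenes lemma in the Gelfand triple $H^1\cap L_\mu^2\subset L^2(e^{2X}\drm x)\subset(H^1\cap L_\mu^2)^*$. This uses $\partial_t(e^{2X}w)\in L^\infty((H^1\cap L_\mu^2)^*)$, which is read off from the equation because $e^{2X}Y w$ and $\nabla e^{2X}\nabla w$ lie in that dual space. Since $e^{\pm2X}\in L^\infty$, this forces $w\equiv0$. The same identity shows that $\|S_tv_0\|_{L^2(e^{2X})}$ is conserved, so $\|S_tv_0\|_{L^2}\lesssim\|v_0\|_{L^2}$, which is the case $\sigma=0$.

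The bound at $\sigma=1$ comes from the uniform estimate above, passed to the weak limit by lower semicontinuity. With $\mu=1$ this gives $\|S_tv_0\|_{L_1^2\cap H^1}\lesssim\|v_0\|_{L_1^2\cap H^1}$ on bounded time intervals, where the constant depends on $|t|$. The intermediate cases $\sigma\in(0,1)$ would then follow by complex interpolation between $L^2$ and $L_1^2\cap H^1$, since $[L^2,L_1^2\cap H^1]_\sigma=L_\sigma^2\cap H^\sigma$.

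I expect this interpolation step to be the main obstacle. Justifying the identity $[L^2,L_1^2\cap H^1]_\sigma=L_\sigma^2\cap H^\sigma$ for intersections requires a common retraction, for example through the harmonic oscillator $-\Delta+|x|^2$, whose fractional domains are exactly $H^\sigma\cap L_\sigma^2$. If that is problematic, I would argue directly instead: define the weight via $\langle x\rangle^{2\sigma}$ and treat the $H^\sigma$ part through the quadratic form of $e^{-X}\CH e^{X}+C$, which is positive on $L^2(e^{2X})$ after shifting by a constant once the $\langle x\rangle$-growth of $Y$ is controlled by the weight.
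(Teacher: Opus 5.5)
Your proposal is correct and takes essentially the same route as the paper. Existence comes from the uniform energy and weighted-mass bounds plus compactness. Uniqueness, the $L^2$ bound, and hence the extension of $S_t$ to $L^2$, come from conservation of the modified mass $\int|v|^2e^{2X}\drm x$; your Lions--Magenes argument makes rigorous the computation the paper only writes formally. The intermediate $\sigma$ follow by interpolating between $L^2$ and $L_1^2\cap H^1$, which the paper does by citing the Bongioanni--Torrea harmonic-oscillator Sobolev-space result you anticipate.

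Your fallback is not needed, and it would not work as stated: $\CH$ on $\IR^d$ is unbounded below (its spectrum is all of $\IR$), so no constant shift of $e^{-X}\CH e^X$ gives a positive form.
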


\medskip

\begin{proof}
The existence follows from the same bounds as for the nonlinear equation for $v_0\in L_\mu^2(\IR^d)\cap H^1(\IR^d)$. For uniqueness, it is enough to prove that the solution is $0$ for $v_0=0$ by linearity. It follows from the computation
\begin{align}
\frac{1}{2}\frac{\drm}{\drm t}\int_{\IR^d}|v(t,x)|^2e^{2X(x)}\drm x&=\text{Re}\int_{\IR^d}\partial_tv(t,x)\overline{v(t,x)}e^{2X(x)}\drm x\\
&=-\text{Re}\int_{\IR^d}ie^X(\CH e^Xv)(t,x)\overline{v(t,x)}\drm x\\
&=\text{Re}\int_{\IR^d}\big(i\nabla e^{2X}\nabla v)(t,x)+Y(x)v(t,x)\big)\overline{v(t,x)}\drm x\\
&=0
\end{align}
which is nothing more that the conservation of the modified mass. Since there exists a unique solution for any $v_0\in L_\mu^2(\IR^d)\cap H^1(\IR^d)$ for any horizon time $T>0$, we denote as $S_t$ the associated flow. The conservation of modified mass implies that the flow satisfies
\begin{equation}
\|S_tv_0\|_{L^2(\IR^d,e^{2X(x)}\drm x)}=\|v_0\|_{L^2(\IR^d,e^{2X(x)}\drm x)}
\end{equation}
for any $v_0\in L_\mu^2(\IR^d)\cap H^1(\IR^d)$ which is a dense subspace of $L^2(\IR^d)$. Since the usual norm on $L^2(\IR^d)$ is equivalent to the modified mass, $S_t$ extends uniquely as a continuous operator on $L^2(\IR^d)$. Considering $\mu=1$, we also proved the bound
\begin{equation}
\|S_tv_0\|_{L_1^2(\IR^d)\cap H^1(\IR^d)}\lesssim\|v_0\|_{L_1^2(\IR^d)\cap H^1(\IR^d)}
\end{equation}
hence interpolation, for example see \cite{BonjioanniTorrea06} Lemma 3, gives
\begin{equation}
\|S_tv_0\|_{L_{\sigma}^2(\IR^d)\cap H^\sigma(\IR^d)}\lesssim\|v_0\|_{L_{\sigma}^2\cap H^\sigma(\IR^d)}
\end{equation}
for any $\sigma\in[0,1]$.
\end{proof}

\begin{remark}\label{Remark:linearFlow}
\textbf{(i)} We have a bound where the exponent weight has to be precisely the regularity exponent, this comes from the interpolation between $L^2(\IR^d)$ and $L_\mu^2(\IR^d)\cap H^1(\IR^d)$ which is the energy space of the harmonic oscillator. We did not find a precise interpolation between $L^2(\IR^d)$ and $L_1^2(\IR^d)\cap H^1(\IR^d)$ for general $\mu\in(0,1)$ while we believe that it follows from the same line of reasoning. It would give the more general bound
\begin{equation}
\|S_tv_0\|_{L_{\sigma\mu}^2(\IR^d)\cap H^\sigma(\IR^d)}\lesssim\|v_0\|_{L_{\sigma\mu}^2(\IR^d)\cap H^\sigma(\IR^d)}
\end{equation}
for $0<\mu\le \sigma\le1$ hence remove the condition $\mu\ge\gamma$ in Theorem \ref{thm:Strichartz}.

\smallskip

\textbf{(ii)} The flow is also continuous from $L^2(\IR^d)$ to itself by extension of a continuous linear operator. This naturally gives a definition of the Anderson Hamiltonian on $\IR^d$ for $d\le3$ as the generator of the strongly continuous one-parameter unitary group $(S_t)_{t\in\IR}$. Hsu and Labbé \cite{HsuLabbe25} recently used the heat semigroup to construct this operator. In their case, the domain of the semigroup depends on $t>0$ while this is not the case for the Schrödinger group defined on $L^2(\IR^d)$. See also \cite{Ueki25} for a direct construction of the operator on $\IR^2$ using paracontrolled calculus.
\end{remark}

We now prove propagation of regularity beyond energy solution for the linear equation in our weighted framework with the space 
\begin{equation}
L_\mu^2(\IR^d)\cap\Gamma H^2(\IR^d)
\end{equation}
for any $0<\mu\le 1$ where $\Gamma$ is defined using paracontrolled calculus in Appendix \ref{sec:construction-AH}. It is implicitly defined by the paracontrolled expansion
\begin{equation}
\Gamma u^\sharp=\P_{\nabla\Gamma u^\sharp}\nabla Z_1+\P_{\Gamma u^\sharp}Z_2+u^\sharp
\end{equation}
for well chosen random field $Z_1$ and $Z_2$ for $u^\sharp\in H^2(\IR^d)$, the construction being harder in three dimensions. The map $\Gamma$ can be extended to $L^2(\IR^d)$ and we have the following Proposition.

\medskip

\begin{proposition}
For $\mu\in(0,1]$, the flow $S_t$ is continuous from $L_\mu^2(\IR^d)\cap\Gamma H^2(\IR^d)$ to itself. In particular, we have
\begin{equation}
\|S_tv_0\|_{L_{\mu}^2(\IR^d)\cap\Gamma H^\sigma(\IR^d)}\lesssim\|v_0\|_{L_{\mu}^2(\IR^d)\cap \Gamma H^\sigma(\IR^d)}
\end{equation}
for any $\sigma\in[1,2]$.
\end{proposition}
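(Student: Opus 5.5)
The plan mirrors the 1D argument of Theorem \ref{thm:WP1D}: differentiate in time, use conservation of the modified mass for $w=\partial_t v$, and read off spatial regularity from the equation through the operator characterising $\Gamma H^2$. The linear equation \eqref{LinearEquation} is time-invariant, so formally $w=\partial_tv$ solves the same equation $ie^{2X}\partial_tw=e^X\CH e^Xw$. The previous Proposition and conservation of the modified mass then give
\begin{equation}
\|\partial_tv(t)\|_{L^2(\IR^d)}\lesssim\|\partial_tv(0)\|_{L^2(\IR^d)}.
\end{equation}
This is exactly the isometry of $S_t$ on $L^2(\IR^d,e^{2X}\drm x)$ applied to $w(0)$. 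To make the step rigorous, I will work with difference quotients $h^{-1}(S_{t+h}v_0-S_tv_0)=S_t\big(h^{-1}(S_hv_0-v_0)\big)$, which avoids differentiating energy solutions directly.

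The initial value $\partial_tv(0)=-ie^{-2X}e^X\CH e^Xv_0$ has to be controlled in $L^2$ for $v_0\in L_\mu^2\cap\Gamma H^2$. Here I use Appendix \ref{sec:construction-AH}: $e^{-X}\CH e^X\Gamma=\Gamma\CH^\sharp$, with $\CH^\sharp$ mapping $H^2_{\text{loc}}$ to $L^2$ up to the growth of the potential. The growing part $\xi^\le$ (or $Y$ in $\CC_{-\delta}$) only costs a weight $\langle x\rangle^\delta$, which is absorbed by $v_0\in L_\mu^2$ for $\delta<\mu$. This yields
\begin{equation}
\|e^{-X}\CH e^Xv_0\|_{L^2}\lesssim\|v_0\|_{L_\mu^2\cap\Gamma H^2}.
\end{equation}
Conversely, at time $t$ the equation gives $e^{-X}\CH e^Xv(t)=ie^{X}\partial_tv(t)\cdot e^{-X}\in L^2$. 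The elliptic estimate $\|\Gamma^{-1}v\|_{H^2}\lesssim\|\CH^\sharp\Gamma^{-1}v\|_{L^2}+\|v\|_{L^2_\mu}$ then gives $v(t)\in\Gamma H^2$; its weighted remainder again uses the localisation of $v(t)$, which the previous Proposition already provides in $L_\mu^2\cap H^1$. Continuity in time follows from the same bound applied to differences together with the strong continuity of $S_t$ on $L^2$.

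The intermediate range $\sigma\in[1,2]$ is obtained by interpolating the $\sigma=1$ bound from the previous Proposition with the $\sigma=2$ bound just proved. This uses $\Gamma H^\sigma=\Gamma[H^1,H^2]_{\sigma-1}$, which holds because $\Gamma$ is a bounded invertible map $H^s\to\Gamma H^s$ for $s\in[0,2]$ by construction, while the weight $L_\mu^2$ is fixed on both ends.

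The main obstacle is the elliptic step: showing that $\CH^\sharp+\Delta$ is a small enough perturbation in weighted spaces that $\CH^\sharp u\in L^2$ and $u\in L^2_\mu$ imply $u\in H^2$. Because the potential grows at infinity, the error terms carry $\langle x\rangle^\delta$ weights. I would first handle this by commuting a weight $\langle x\rangle^{-\delta}$ through $\CH^\sharp$, so that the commutator terms are lower order and bounded in $H^1_{\mu}$, and then by high-frequency cutoffs inside $\Gamma$ to make the paracontrolled remainders small.
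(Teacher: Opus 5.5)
Your proposal follows the paper's proof essentially step by step: $w=\partial_tv=S_tw(0)$ with $w(0)$ controlled by $v_0\in L_\mu^2\cap\Gamma H^2$, the equation then gives $e^X\CH e^Xv_t\in L^2$, this is converted into $v_t\in\Gamma H^2$, and the range $\sigma\in[1,2]$ follows by interpolating the conjugated flow $S_t^\sharp=\Gamma^{-1}S_t\Gamma$ between $\sigma=1$ and $\sigma=2$. The elliptic step you single out as the main obstacle is exactly what the paper takes from Corollary \ref{cor:EquivalenceDomainSpaces} (after absorbing the $\Gamma H^{\frac{d}{2}+\kappa}$ term by interpolation), and your propagation of $w$ in unweighted $L^2$ through the isometry of $S_t$ on $L^2(\IR^d,e^{2X}\drm x)$ is, if anything, cleaner than the paper's $L^2_{\mu'}$ bound.
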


\medskip

\begin{proof}
For $v_0\in L_\mu^2\cap \Gamma H^2\subset L_\mu^2\cap H^1$, there exists a unique solution $v$ to the linear equation 
\begin{equation}
ie^{2X}\partial_tv=e^X\CH e^Xv
\end{equation}
with $v(0)=v_0$. Then $w=\partial_tv$ satisfies the same linear equation with
\begin{equation}
w(0)=e^X\CH e^Xv_0\in L_{\mu'}^2(\IR^d)
\end{equation}
for any $\mu'<\mu$ using that $v_0\in L_\mu^2\cap\Gamma H^2$. Since $w_t=S_tw(0)$, we get
\begin{equation}
\|w_t\|_{L_{\mu'}^2(\IR^d)}\lesssim\|v_0\|_{L_\mu^2\cap \Gamma H^2}
\end{equation}
hence
\begin{equation}
\|e^X\CH e^Xv_t\|_{L_{\mu'}^2(\IR^d)}\lesssim\|v_0\|_{L_\mu^2\cap \Gamma H^2}
\end{equation}
using the equation on $v$. This gives
\begin{equation}
\|v_t\|_{L_{\mu'}^2\cap \Gamma H^2}\lesssim\|v_0\|_{L_\mu^2\cap \Gamma H^2}
\end{equation}
using Corollary \ref{cor:EquivalenceDomainSpaces} thus the flow
\begin{equation}
S_t^\sharp:=\Gamma^{-1}S_t\Gamma
\end{equation}
is continuous from $L_\mu^2(\IR^d)\cap H^\sigma(\IR^d)$ to itself for $\sigma\in\{1,2\}$ and interpolation completes the proof.
\end{proof}

\begin{remark}\label{remark:vsharpEquation}
One can prove that $\Gamma H^\sigma(\IR^d)=H^\sigma(\IR^d)$ for $\sigma<2$ in two dimensions and $\sigma<\frac{3}{2}$ in three dimensions hence propagation of regularity beyond energy solutions for the equation on $v$. For $d=2$, we get $v_t\in H^{1+\gamma}(\IR^2)$ for $v_0\in H^{1+\gamma}$ and any $\gamma\in(0,1)$. We get that $e^X\CH e^Xv\in H^{-1+\gamma}(\IR^2)$ for $v\in H^{1+\gamma}(\IR^2)$ hence its products with $e^{-2X}\in\CC^{1-\kappa}(\IR^2)$ is well-defined. This gives the equation 
\begin{equation}
i\partial_tv=e^{-X}\CH e^Xv
\end{equation}
which involves the gradient term $\nabla X\cdot\nabla v$ since $\gamma>0$. For the linear equation, this recovers the notion of solution of Debussche and Weber \cite{DW} on the torus for example. In three dimensions, we have propagation of $H^\sigma(\IR^3)$ for $\sigma<\frac{3}{2}$ while $e^{-2X}\in\CC^{\frac{1}{2}-\kappa}$ hence this does not work. Using the paracontrolled calculus, one can consider the conjugated flow
\begin{equation}
S_t^\sharp=\Gamma^{-1}S_t\Gamma
\end{equation}
which is then continuous from $L_\mu^2(\IR^d)\cap H^\sigma(\IR^d)$ to itself for any $\sigma\in[1,2]$ and $0<\mu\le1$ and we get the equation
\begin{equation}
i\partial_tv^\sharp=\Gamma^{-1}e^{-X}\CH e^X\Gamma v^\sharp
\end{equation}
with $v=\Gamma v^\sharp$ for $\sigma\in(\frac{3}{2},2)$. In particular, $S_t^\sharp$ is the flow associated to $\CH^\sharp=(\Gamma e^X)^{-1}\CH (e^X\Gamma)$.
\end{remark}

\subsection{Strichartz inequalities}

Let $u_\eps$ and $u_\eps'$ be two solutions to \eqref{GeneralEquation} with a regularized renormalized noise and consider $w_\eps=u_\eps-u_\eps'$. We get
\begin{equation}
i\partial_tw_\eps=\CH_\eps w_\eps+f(u_\eps)-f(u_\eps')
\end{equation}
with $f$ the polynomial or Hartree nonlinearity. Multiply the equation by $\overline{w_\eps}$, integrate in time and take the imaginary part gives
\begin{equation}
\frac{\drm }{\drm t}\int_{\IR^d}|w_\eps(t,x)|^2\drm x=\text{Im}\Big(\int_{\IR^d}\big(f(u_\eps)-f(u_\eps')\big)(t,x)\overline{w_\eps(t,x)}\drm x\Big)
\end{equation}
where one has to specify the nonlinearity at this point. For example in the cubic case, we have
\begin{equation}
f(u_\eps)-f(u_\eps')=|u_\eps|^2u_\eps-|u_\eps'|^2u_\eps'
\end{equation} 
which gives
\begin{equation}
\frac{\drm }{\drm t}\int_{\IR^d}|w_\eps(t,x)|^2\drm x\lesssim\int_{\IR^d}(|u_\eps(t,x)|^2+|u_\eps'(t,x)|^2)|w_\eps(t,x)|^2\drm x.%\int_{\IR^d}|f(u_\eps)-f(u_\eps')|^2(t,x)\drm x\int_{\IR^d}|w_\eps(t,x)|^2\drm x.
\end{equation}
hence with Grönwall's lemma, we get
\begin{equation}
\int_{\IR^d}|w_\eps(T,x)|^2\drm x\lesssim e^{c\int_0^T(\|u_\eps(t)\|_{L^\infty}^2+\|u_\eps'(t)\|_{L^\infty}^2)\drm t}\int_{\IR^d}|u_0(x)-u_0'(x)|^2\drm x
\end{equation}
for any $T>0$. From this estimate, uniqueness follows from a bound on $u$ and $u'$ in $L^2([0,T],L^\infty(\IR^d))$ for the cubic nonlinearity. While conservation of mass yields solutions in $L^\infty([0,T],L^2(\IR^d))$, the goal is to trade time integrability for spatial integrability. In particular, the initial data does not belong to $L^\infty(\IR^d)$ for energy solutions with $d\in\{2,3\}$ and this requires dispersive properties of the solution. This is exactly encoded in Strichartz inequalities with the following result from Keel and Tao \cite{KeelTao1998} in the deterministic case.

\medskip

\begin{theorem}\label{Thm:KeelTao}
Let $T>0$ and $\mu\in(0,1)$. Let $(p_1,q_2)$ and $(p_2,q_2)$ two Strichartz pairs, that is
\begin{equation}
\frac{2}{p_i}+\frac{d}{q_i}=\frac{d}{2}
\end{equation}
for $i\in\{1,2\}$ with $(p_i,q_i)\neq(2,\infty)$ in two dimensions. Let $w$ be a solution to
\begin{equation}
i\partial_tw=-\Delta w+F
\end{equation}
with initial data $w_0$. Then the solution satisfies the bound
\begin{equation}
\|w\|_{L^p([0,T],L^q(\IR^d))}\lesssim\|w_0\|_{L^2(\IR^d)}+\|F\|_{L^{p_2'}([0,T],L^{q_2'}(\IR^d))}
\end{equation}
where $p_2'$ and $q_2'$ denotes the conjugated exponent of $p_2$ and $q_2$.
\end{theorem}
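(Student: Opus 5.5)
This is the classical Keel--Tao endpoint-inclusive Strichartz estimate for the free Schrödinger group on $\IR^d$. I would prove it through the abstract $TT^*$ framework rather than by any paracontrolled argument, since no noise is involved. I read the statement with $(p,q)=(p_1,q_1)$ as the first admissible pair; the typo $(p_1,q_2)$ should read $(p_1,q_1)$. The parameter $\mu$ plays no role here.

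\textbf{Step 1: dispersive and energy estimates.} Write $U(t)=e^{it\Delta}$, so that by Duhamel
\begin{equation}
w(t)=U(t)w_0-i\int_0^tU(t-s)F(s)\,\drm s .
\end{equation}
The family $U(t)$ satisfies two bounds:
\begin{itemize}
\item the energy estimate $\|U(t)f\|_{L^2}=\|f\|_{L^2}$, by Plancherel;
\item the dispersive estimate $\|U(t)U(s)^*g\|_{L^\infty}\lesssim|t-s|^{-\frac d2}\|g\|_{L^1}$, from the explicit Gaussian kernel $(4\pi i t)^{-d/2}e^{i|x|^2/4t}$.
\end{itemize}
These are exactly the hypotheses of Keel--Tao's abstract theorem with decay exponent $\sigma=\frac d2$. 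The sharp admissibility condition is then $\frac1p+\frac{\sigma}{q}=\frac\sigma2$, which is our condition $\frac2p+\frac dq=\frac d2$. The pair $(2,\infty)$ must be excluded when $\sigma=1$, that is when $d=2$.

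\textbf{Step 2: non-endpoint homogeneous estimate.} Set $T_0f=U(t)f$ restricted to $[0,T]$. By duality, $T_0$ is bounded from $L^2$ to $L^pL^q$ if and only if $T_0T_0^*$ is bounded from $L^{p'}L^{q'}$ to $L^pL^q$. Now
\begin{equation}
T_0T_0^*G(t)=\int U(t-s)G(s)\,\drm s .
\end{equation}
Interpolating the energy and dispersive bounds (Riesz--Thorin) gives
\begin{equation}
\|U(t-s)G(s)\|_{L^q}\lesssim|t-s|^{-d(\frac12-\frac1q)}\|G(s)\|_{L^{q'}}=|t-s|^{-\frac2p}\|G(s)\|_{L^{q'}}.
\end{equation}
For $p>2$, Hardy--Littlewood--Sobolev in time then closes the estimate.

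\textbf{Step 3: inhomogeneous estimate.} Consider the retarded operator $\int_{s<t}U(t-s)F(s)\,\drm s$ with two different admissible pairs.
\begin{itemize}
\item \emph{Non-endpoint cases.} Factor the full-line operator through $L^2$ as $T_1T_2^*$. The time truncation $s<t$ is then removed by the Christ--Kiselev lemma, which is valid whenever $p_1>p_2'$. This holds unless both exponents equal $2$.
\item \emph{Double endpoint $p_1=p_2=2$.} This case exists only for $d\ge3$, which covers $d=3$ in the paper. Here I would reproduce the Keel--Tao argument. Decompose $\{s<t\}$ dyadically according to $|t-s|\sim2^j$, giving bilinear forms
\begin{equation}
T_j(F,G)=\iint_{|t-s|\sim 2^j}\langle U(s)^*F(s),U(t)^*G(t)\rangle\,\drm s\,\drm t .
\end{equation}
Prove the bound
\begin{equation}
|T_j(F,G)|\lesssim 2^{-j\beta(a,b)}\|F\|_{L^2L^{a'}}\|G\|_{L^2L^{b'}}
\end{equation}
for $(1/a,1/b)$ in a neighbourhood of $(1/q,1/q)$. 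The two inputs are the $L^1\to L^\infty$ dispersive bound and an $L^2$-based bound obtained by localizing time to intervals of length $2^j$ and using the non-endpoint homogeneous estimate.
\end{itemize}

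\textbf{Main obstacle.} The hardest step is summing over $j$ in the endpoint case. The exponent $\beta(a,b)$ vanishes exactly at the endpoint, so the sum does not converge directly. I would first use bilinear real interpolation, as in Keel--Tao: the family of bounds yields boundedness from $\ell^{s}_{\beta}$-valued Besov-type spaces, and choosing exponents with $\beta=0$ and the Lorentz refinement $L^{q',2}\subset L^{q'}$ gives the summed estimate.

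Combining the homogeneous bound of Step 2 for $U(t)w_0$ with the inhomogeneous bound of Step 3 for the Duhamel term yields the stated inequality on $[0,T]$, with a constant independent of $T$.
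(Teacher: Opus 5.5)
The paper does not prove this statement; it only cites Keel--Tao, and your outline is their argument, so the route is the intended one. As organized, however, your plan never establishes the endpoint homogeneous estimate $\|e^{it\Delta}w_0\|_{L^2([0,T],L^{q}(\IR^d))}\lesssim\|w_0\|_{L^2}$ with $q=\frac{2d}{d-2}$, and this estimate is the real content of Keel--Tao. Step 2 only covers $p>2$: at $p=2$ the time kernel is $|t-s|^{-1}$ and Hardy--Littlewood--Sobolev fails. Yet two later steps rely on the endpoint case.

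\begin{itemize}
\item Your final paragraph appeals to ``the homogeneous bound of Step 2'' for every admissible pair.
\item Your mixed case in Step 3 (one endpoint pair, one non-endpoint pair) factors the untruncated operator as $T_1T_2^*$. This needs $T_1\colon L^2\to L^2L^{q_1}$ at the endpoint.
\end{itemize}

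This matters for the paper: in $d=3$ the pair $(2,6)$ is exactly the one used for the Hartree equation, through the homogeneous part of the proof of Theorem~\ref{thm:Strichartz}.

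The fix is to reorder the argument.
\begin{enumerate}
\item Run the dyadic bilinear argument first, to get $|T(F,G)|\lesssim\|F\|_{L^2L^{q'}}\|G\|_{L^2L^{q'}}$ for the retarded form $T(F,G)=\iint_{s<t}\langle U(s)^*F(s),U(t)^*G(t)\rangle\,\drm s\,\drm t$.
\item The contribution of $\{s>t\}$ to $\langle T_0T_0^*F,G\rangle$ is $\overline{T(G,F)}$. So this single bound controls $T_0T_0^*\colon L^2L^{q'}\to L^2L^q$, hence $T_0\colon L^2\to L^2L^q$ by the $TT^*$ principle.
\item By duality, the same bilinear bound is also the double-endpoint retarded estimate.
\end{enumerate}
With the endpoint homogeneous estimate in hand, your mixed case (factorization plus Christ--Kiselev) and your final combination go through as you describe.

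A smaller slip: the Lorentz inclusion in your summation step is reversed. Bilinear real interpolation controls $\sum_j|T_j(F,G)|$ by $\|F\|_{L^2L^{q',2}}\|G\|_{L^2L^{q',2}}$. You then conclude because $q'<2$ gives $L^{q'}=L^{q',q'}\subset L^{q',2}$. The inclusion $L^{q',2}\subset L^{q'}$ that you wrote is false for $q'<2$.
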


\medskip

In this section, we obtain Strichartz inequalities for \eqref{GeneralEquation} in the linear case $f=0$ using this result for the deterministic equation. The proof relies on the transformed operator
\begin{equation}
\CH^\sharp=(e^X\Gamma)^{-1}\CH(e^X\Gamma)
\end{equation}
introduced in Appendix \ref{sec:construction-AH} with Proposition \ref{prop:fineHcomparison} which gives
\begin{equation}
\|(\CH^\sharp+\Delta)v^\sharp\|_{H^{\gamma-\kappa}(\IR^d)}\lesssim\|v^\sharp\|_{H^{\frac{d}{2}+\gamma}(\IR^d)\cap L_\mu^2(\IR^d)}
\end{equation}
for any $\kappa>0$ and $\gamma\in(0,2-\frac{d}{2})$. The construction of the $\Gamma$ map requires paracontrolled calculus which explains that $\CH^\sharp+\Delta$ takes values in a space of positive regularity, this is in contrast with the weak bound
\begin{equation}
\|(e^{-X}\CH e^X+\Delta)v\|_{H^{-\kappa}(\IR^d)}\lesssim\|v\|_{H^{\frac{d}{2}+\gamma}(\IR^d)\cap L_\mu^2(\IR^d)}
\end{equation}
for $\gamma>0$ where one does not gain from the regularity of $v$. We prove the following Strichartz inequalities following \cite{MZ,DeVecchiJiZachhuber25}. Paracontrolled calculus is only needed for Proposition \ref{prop:fineHcomparison} which is proved in the Appendix and can be considered as a black box. In the previous section, we proved continuity properties of the flow $S_t$ associated to the linear equation
\begin{equation}
ie^{2X}\partial_tv=e^X\CH e^Xv
\end{equation}
for $v_0\in L_\mu^2(\IR^2)\cap H^1(\IR^2)$. We naturally have the conjugated flow
\begin{equation}
S_t^\sharp=\Gamma^{-1}S_t\Gamma
\end{equation}
which is continuous from $L_\mu^2(\IR^d)\cap H^\sigma(\IR^d)$ to itself for any $\sigma\in[1,2]$. As explained in Remark \ref{remark:vsharpEquation}, considering $v=\Gamma v^\sharp$ with $v^\sharp\in H^\sigma$ for $\sigma\in(\frac{d}{2},2)$ gives the equation
\begin{equation}
i\partial_tv^\sharp=\CH^\sharp v^\sharp
\end{equation}
with $v^\sharp(0)=\Gamma^{-1}v_0$ since $\Gamma$ does not depend on time. One can then consider the mild formulation associated to the Laplacian
\begin{equation}
v_t^\sharp=e^{it\Delta}v_0^\sharp-i\int_0^te^{i(t-s)\Delta}(\CH^\sharp+\Delta)v_s^\sharp\drm s
\end{equation}
using the classical Schrödinger group. As done in \cite{MZ} on compact surfaces, we use this decomposition to obtain the following Strichartz inequalities.

\medskip

\begin{theorem}\label{thm:Strichartz}
Let $(p,q)$ be a Strichartz pair. We have
\begin{equation}
\|S_t^\sharp v_0\|_{L^p([0,1],W^{\gamma,q}(\IR^d))}\lesssim\|v_0\|_{L_\mu^2\cap H^{\frac{d}{2p}+\kappa+\gamma}}
\end{equation}
for any $\gamma\in[0,2-\frac{d}{2p})$ and $\mu\in(\gamma,1)$ with $\kappa>0$ small enough. For the initial flow, we get
\begin{equation}
\|S_t v_0\|_{L^p([0,1],W^{\gamma,q}(\IR^d))}\lesssim\|v_0\|_{L_\mu^2\cap H^{\frac{d}{2p}+\kappa+\gamma}}
\end{equation}
for any $\frac{d}{2p}+\gamma<3-\frac{d}{2}$ and $\mu\in(\gamma,1)$.
\end{theorem}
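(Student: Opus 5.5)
The plan follows the perturbative argument of \cite{MZ}, adapted to full space, with a Littlewood–Paley decomposition in frequency and a time splitting at a frequency-dependent scale. Fix a dyadic block $\Delta_n$ at frequency $2^n$ and split $[0,1]$ into intervals $I_k$ of length $2^{-n}$; there are about $2^n$ of them. On each $I_k=[t_k,t_{k+1}]$ I write the mild formulation of Remark \ref{remark:vsharpEquation} with respect to the Laplacian, started at $t_k$:
\begin{equation}
v_t^\sharp=e^{i(t-t_k)\Delta}v_{t_k}^\sharp-i\int_{t_k}^te^{i(t-s)\Delta}(\CH^\sharp+\Delta)v_s^\sharp\drm s .
\end{equation}
I then apply Theorem \ref{Thm:KeelTao} on $I_k$ with the dual pair $(p_2',q_2')=(1,2)$ to $\Delta_nv^\sharp$. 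This gives
\begin{equation}
\|\Delta_nv^\sharp\|_{L^p(I_k,W^{\gamma,q})}\lesssim2^{n\gamma}\Big(\|\Delta_nv_{t_k}^\sharp\|_{L^2}+\int_{I_k}\|\Delta_n(\CH^\sharp+\Delta)v_s^\sharp\|_{L^2}\drm s\Big).
\end{equation}

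For the first term I use the continuity of $S_t^\sharp$ on $L_\mu^2\cap H^\sigma$, from the propagation proposition in the form valid for $S_t^\sharp$ via Remark \ref{remark:vsharpEquation}. For the forcing term I use Proposition \ref{prop:fineHcomparison}, the bound $\|(\CH^\sharp+\Delta)v^\sharp\|_{H^{\gamma'-\kappa}}\lesssim\|v^\sharp\|_{H^{\frac d2+\gamma'}\cap L_\mu^2}$, together with the propagation estimate, to control the integrand uniformly in $s$. The integral over $I_k$ then carries a factor $|I_k|=2^{-n}$. Raising to the power $p$ and summing over the $2^n$ intervals gives a factor $2^{n/p}$ on the initial data term. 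By Bernstein's inequality and the frequency localization this becomes $2^{n(\frac{1}{p}+\gamma)}$ times $L^2$ mass at frequency $2^n$. I then rescale the splitting: take intervals of length $2^{-2n\theta}$ in general and optimize. In the end I expect the loss $\frac{d}{2p}$ derivatives as stated, with the factor $\frac d2$ coming from the fact that Proposition \ref{prop:fineHcomparison} needs $\frac d2+\gamma$ derivatives on the input while only gaining up to $\gamma$ on the output. The time scale should be chosen so that the loss from counting intervals, $2^{n\cdot 2\theta/p}$, balances the loss in the Duhamel term, $2^{-2n\theta}2^{n(\frac d2+\kappa)}$. Summing over $n$ with the extra $\kappa$ gives the $H^{\frac{d}{2p}+\kappa+\gamma}$ norm. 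The condition $\mu>\gamma$ enters when I interpolate the weighted propagation bounds between $L^2$ and $L_1^2\cap H^1$ (Remark \ref{Remark:linearFlow}) to get the intermediate regularity with weight. The restriction $\gamma<2-\frac{d}{2p}$ comes from the fact that $S^\sharp_t$ is only known to propagate $H^\sigma$ for $\sigma\le2$.

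For the original flow I write $S_tv_0=\Gamma S_t^\sharp\Gamma^{-1}v_0$ and use the mapping properties of $\Gamma$ from Appendix \ref{sec:construction-AH}. The map $\Gamma$ is a paracontrolled perturbation of the identity, so $\Gamma$ and $\Gamma^{-1}$ are bounded on $W^{\gamma,q}$ and on $H^s\cap L^2_\mu$ as long as the regularity stays below that of $Z_1,Z_2$, which is about $2-\frac d2$ above the identity part. This yields the restriction $\frac{d}{2p}+\gamma<3-\frac d2$.

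The main obstacle is the Duhamel term. To make it work I need the forcing $(\CH^\sharp+\Delta)v^\sharp$ in a positive-regularity, weighted-controlled space that is uniform in time. This requires combining Proposition \ref{prop:fineHcomparison} with propagation of $H^{\frac d2+\gamma}\cap L^2_\mu$ under $S^\sharp$ without loss of localization. I would first try to absorb the weight loss $\mu'<\mu$ by the strict inequality $\mu>\gamma$.
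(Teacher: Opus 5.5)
Your overall architecture matches the paper's. It uses the mild formulation with respect to the Laplacian on short intervals, homogeneous Strichartz for the free part, the dual pair $(1,2)$ for the Duhamel part, Proposition~\ref{prop:fineHcomparison} for the forcing, and $S_t=\Gamma S_t^\sharp\Gamma^{-1}$ at the end. However, the Duhamel estimate fails with the localization you chose.

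You only project the output, $\Delta_nS^\sharp_tv_0$. The forcing term is then $\|\Delta_n(\CH^\sharp+\Delta)S^\sharp_sv_0\|_{L^2}$, where $S^\sharp_sv_0$ carries all frequencies. Proposition~\ref{prop:fineHcomparison} bounds this only by $2^{-n\gamma'}\|S^\sharp_sv_0\|_{H^{\frac d2+\gamma'+\kappa}\cap L^2_\mu}$ with $\gamma'\in(0,2-\frac d2)$, hence by $\|v_0\|_{H^{\frac d2+\gamma'+\kappa}\cap L^2_\mu}$. That is more than $\frac d2$ derivatives on the data whatever time step you pick, well above $\frac d{2p}+\gamma+\kappa$.

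The factor $2^{n(\frac d2+\kappa)}$ you balance against the number of intervals would be available only if $\CH^\sharp+\Delta$ and $S^\sharp_s$ preserved frequency localization. Then the $L^2$ mass of $v^\sharp_s$ at frequency $2^n$ would be all that matters. But $\CH^\sharp+\Delta$ is built from products with rough fields, and $S^\sharp_t$ does not commute with $\Delta_n$. Only $e^{it\Delta}$ does, which is why your free term is fine. Taking $\gamma'<0$ does not rescue this: the proposition gives nothing into negative Sobolev spaces, and for low-regularity inputs products such as $\nabla X\cdot\nabla v$ are not even defined. (Your balancing also drops the factor $N^{1/p}$ that the Duhamel term picks up when summing over intervals.)

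The missing idea is to localize the \emph{input} as well. Decompose $v_0=\sum_k\Delta_kv_0$ and estimate $\Delta_jS^\sharp_t\Delta_kv_0$. The continuity of $S^\sharp$ on $L^2_\mu\cap H^\sigma$ for $\sigma\le2$ gives $\|S^\sharp_s\Delta_kv_0\|_{H^{\frac d2+\gamma+\kappa}}\lesssim\|\Delta_kv_0\|_{L^2_\mu\cap H^{\frac d2+\gamma+\kappa}}$. By Bernstein the extra $\frac d2$ derivatives now cost only a power of the input frequency, $2^{k(\frac d2+\kappa)}$.

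The paper takes $N=2^{(\frac d2-\kappa)j}$ intervals when $k\le j$, paying for the $\frac d2$ derivatives with $\|\Delta_{\le j}v_0\|_{H^{s}}\lesssim2^{\eta j}\|\Delta_{\le j}v_0\|_{H^{s-\eta}}$. It takes $N=2^{(\frac d2-\kappa)k}$ when $k>j$. The output projector $\Delta_j$ then only supplies the decay $2^{-j\sigma}$ needed to sum the double series. With $N\approx2^{\frac d2\max(j,k)}$, the free contribution $N^{1/p}$ and the Duhamel contribution $N^{1/p}N^{-1}2^{\frac d2\max(j,k)}$ are both of size $2^{\frac d{2p}\max(j,k)}$. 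This is where the loss of $\frac d{2p}$ derivatives comes from.
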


\medskip

\begin{proof}
We work with the Paley-Littlewood projectors $\Delta_j$ on an annulus of size $2^j$ in frequencies. Let $k,j\ge0$ be fixed. For any $N\ge1$, we have
\begin{equation}
\|\Delta_jS_t^\sharp\Delta_kv\|_{L^p([0,1],W^{\gamma,q}(\IR^d))}^p=\sum_{n=0}^{N-1}\|\Delta_jS_t^\sharp\Delta_kv\|_{L^p([t_n,t_{n+1}],W^{\gamma,q}(\IR^d))}^p
\end{equation}
with $t_n=\frac{n}{N}$ for $0\le n\le N$. For $t\in[t_n,t_{n+1}]$, we have
\begin{align*}
S_t^\sharp\Delta_kv&=S_{t-t_n}^\sharp S_{t_n}^\sharp\Delta_kv\\
&=e^{-i(t-t_n)\Delta}S_{t_n}^\sharp\Delta_kv+\int_{t_n}^te^{-i(t-s)\Delta}\big(\CH^\sharp+\Delta\big)S_s^\sharp\Delta_kv\drm s
\end{align*}
using the mild formulation for $S_{t-t_n}^\sharp$ with respect to the Laplacian hence
\begin{align*}
\|\Delta_jS_t^\sharp\Delta_kv\|_{L^p([0,1],W^{\gamma,q}(\IR^d))}^p\le\sum_{n=0}^{N-1}&\|\Delta_je^{-i(t-t_n)\Delta}S_{t_n}^\sharp\Delta_kv\|_{L^p([t_n,t_{n+1}],W^{\gamma,q}(\IR^d))}^p\\
&+\|\int_{t_n}^t\Delta_je^{-i(t-s)\Delta}\big(\CH^\sharp+\Delta\big)S_s^\sharp\Delta_kv\drm s\|_{L^p([t_n,t_{n+1}],W^{\gamma,q}(\IR^d))}^p.
\end{align*}
A crucial point is that the projector $\Delta_j$ commutes with $e^{it\Delta}$ while this is not true for $S_t^\sharp$. For the first term, we have
\begin{align*}
\|\Delta_je^{-i(t-t_n)\Delta}S_{t_n}^\sharp\Delta_kv\|_{L^p([t_n,t_{n+1}],W^{\gamma,q}(\IR^d))}&=\|e^{-i(t-t_n)\Delta}\Delta_jS_{t_n}^\sharp\Delta_kv\|_{L^p([t_n,t_{n+1}],W^{\gamma,q}(\IR^d))}\\
&\lesssim \|\Delta_jS_{t_n}^\sharp\Delta_kv\|_{H^\gamma}\\
&\lesssim2^{-j\delta}\|S_{t_n}^\sharp\Delta_kv\|_{H^{\gamma+\delta}}\\
&\lesssim2^{-j\delta}\|\Delta_kv\|_{L_\mu^2\cap H^{\gamma+\delta}}\\
&\lesssim2^{-j\delta}2^{-k\delta'}\|\Delta_kv\|_{L_\mu^2\cap H^{\gamma+\delta+\delta'}}
\end{align*}
for any $\delta,\delta',\eps>0$ using Bernstein lemma and continuity of $S_t^\sharp$ on $L_\mu^2\cap H^{\gamma+\delta}$ to $H^{\gamma+\delta}$ for $\gamma+\delta\le 2$ and $\mu\ge\gamma+\delta$ if $\gamma+\delta<1$. For the second term, we have
\begin{align*}
\|\int_{t_n}^t\Delta_je^{-i(t-s)\Delta}\big(\CH^\sharp+\Delta\big)S_s^\sharp\Delta_kv\drm s\|_{L^p([t_n,t_{n+1}],W^{\gamma,q}(\IR^d))}&\le\int_{t_n}^{t_{n+1}}\|\Delta_j\big(\CH^\sharp+\Delta\big)S_s^\sharp\Delta_kv\|_{H^\gamma}\drm s\\
&\le\int_{t_n}^{t_{n+1}}2^{-j\sigma}\|\big(\CH^\sharp+\Delta\big)S_s^\sharp\Delta_kv\|_{H^{\gamma+\sigma}}\drm s\\
&\le\int_{t_n}^{t_{n+1}}2^{-j\sigma}\|S_s^\sharp\Delta_kv\|_{H^{\gamma+\sigma+\frac{d}{2}-\kappa}}\drm s\\
&\lesssim N^{-1}2^{-j\sigma}\|\Delta_kv\|_{L_\mu^2\cap H^{\gamma+\sigma+\frac{d}{2}-\kappa}}\\
&\lesssim N^{-1}2^{-j\sigma}2^{-k\sigma'}\|\Delta_kv\|_{L_\mu^2\cap H^{\gamma+\sigma+\sigma'+\frac{d}{2}-\kappa}}
\end{align*}
for any $\sigma,\sigma',\eps>0$ with the same arguments in addition to the control of $\CH^\sharp+\Delta$ in a positive Sobolev space for $\gamma+\sigma<2-\frac{d}{2}$. We get
\begin{equation}
\|\Delta_jS_t^\sharp\Delta_kv\|_{L^p([0,1],W^{\gamma,q}(\IR^d))}\lesssim N^{\frac{1}{p}}2^{-j\delta}2^{-k\delta'}\|\Delta_kv\|_{L_\mu^2\cap H^{\gamma+\delta+\delta'}}+N^{-\frac{p-1}{p}}2^{-j\sigma}2^{-k\sigma'}\|\Delta_kv\|_{L_\mu^2\cap H^{\gamma+\sigma+\sigma'+\frac{d}{2}-\kappa}}
\end{equation}
and we choose different parameters to deal with the sums $k\le j$ and $k>j$. For the first sum, consider
\begin{equation}
\delta=\frac{1}{p}\eta'+\kappa,\delta'>0,\sigma>0,\sigma'>0,N=2^{\eta'j},\eta=\frac{p-1}{p}\eta',\eta'=\frac{d}{2}-\kappa
\end{equation}
which gives
\begin{align*}
\sum_{k\le j}\|\Delta_j&S_t^\sharp\Delta_kv\|_{L^p([0,1],W^{\gamma,q}(\IR^d))}\lesssim\sum_{j\ge0}\sum_{k\le j}N^{\frac{1}{p}}2^{-j\delta}2^{-k\delta'}\|\Delta_kv\|_{L_\mu^2\cap H^{\gamma+\delta+\delta'}}\\
&\hspace{4cm}+N^{-\frac{p-1}{p}}2^{-j\sigma}2^{-k\sigma'}\|\Delta_kv\|_{L_\mu^2\cap H^{\gamma+\sigma+\sigma'+\frac{d}{2}-\kappa}}\\
&\lesssim\sum_{j\ge0}N^{\frac{1}{p}}2^{-j\delta}\|\Delta_{\le j}v\|_{L_\mu^2\cap H^{\gamma+\delta+\delta'}}+N^{-\frac{p-1}{p}}2^{-j\sigma}\|\Delta_{\le j}v\|_{L_\mu^2\cap H^{\gamma+\sigma+\sigma'+\frac{d}{2}-\kappa}}\\
&\lesssim\sum_{j\ge0}N^{\frac{1}{p}}2^{-j\delta}\|\Delta_{\le j}v\|_{L_\mu^2\cap H^{\gamma+\delta+\delta'}}+N^{-\frac{p-1}{p}}2^{-j\sigma}2^{\eta j}\|\Delta_{\le j}v\|_{L_\mu^2\cap H^{\gamma+\sigma+\sigma'+\frac{d}{2}-\kappa-\eta}}\\
&\lesssim\sum_{j\ge0}2^{-j\kappa}\|\Delta_{\le j}v\|_{L_\mu^2\cap H^{\gamma+\frac{1}{p}\eta'+\kappa+\delta'}}+2^{-\frac{p-1}{p}\eta'j}2^{-j\sigma}2^{\frac{p-1}{p}\eta'j}\|\Delta_{\le j}v\|_{L_\mu^2\cap H^{\gamma+\sigma+\sigma'+\frac{d}{2}-\kappa-\frac{p-1}{p}\eta'}}\\
% &\lesssim\sum_{j\ge0}2^{-j\kappa}\|\Delta_{\le j}v\|_{H^{\frac{1}{6}\gamma'+\eps+\kappa+\delta'}}+2^{-\frac{5}{6}\gamma'j}2^{-j\sigma}2^{\frac{5}{6}\gamma'j}\|\Delta_{\le j}v\|_{H^{\eps+\sigma+\sigma'+1-\kappa-\frac{5}{6}\gamma'}}\\
&\lesssim\|v\|_{L_\mu^2\cap H^{\gamma+\frac{1}{p}\eta'+\kappa+\delta'}}+\|v\|_{L_\mu^2\cap H^{\gamma+\frac{d}{2}-\kappa-\frac{p-1}{p}\eta'+\sigma+\sigma'}}\\
&\lesssim\|v\|_{L_\mu^2\cap H^{\gamma+\frac{d}{2p}+\eps'}}
\end{align*}
for any $\eps'>0$. For the sum $k>j$, we take
\begin{equation}
\delta>0,\delta'=\frac{1}{p}\gamma'+\kappa,\sigma>0,\sigma'>0,N=2^{\gamma'k},\eta=\frac{p-1}{p}\eta',\eta'=\frac{d}{2}-\kappa
\end{equation}
hence
\begin{align*}
\sum_{k>j}\|\Delta_je^{it\CH^\sharp}\Delta_kv\|_{L^p([0,1],W^{\gamma,q}(\IR^d))}&\lesssim\sum_{k\ge0}\sum_{j<k}N^{\frac{1}{p}}2^{-j\delta}2^{-k\delta'}\|\Delta_kv\|_{L_\mu^2\cap H^{\gamma+\delta+\delta'}}\\
&\hspace{1.5cm}+N^{-\frac{p-1}{p}}2^{-j\sigma}2^{-k\sigma'}\|\Delta_kv\|_{L_\mu^2\cap H^{\gamma+\sigma+\sigma'+\frac{d}{2}-\kappa}}\\
&\lesssim\sum_{k\ge0}2^{\frac{1}{p}\eta'k}2^{-k\delta'}\|\Delta_kv\|_{L_\mu^2\cap H^{\gamma+\delta+\delta'}}\\
&\hspace{1.5cm}+2^{-\frac{p-1}{p}\eta'k}2^{-k\sigma'}2^{k\eta}\|\Delta_kv\|_{L_\mu^2\cap H^{\gamma+\sigma+\sigma'+\frac{d}{2}-\kappa-\eta}}\\
&\lesssim\|v\|_{L_\mu^2\cap H^{\gamma+\frac{1}{p}\eta'+\eps'}}+\|v\|_{L_\mu^2\cap H^{\gamma+\frac{d}{2}-\kappa-\frac{p-1}{p}\eta'+\eps'}}\\
&\lesssim\|v\|_{L_\mu^2\cap H^{\gamma+\frac{d}{2p}+\eps'}}
\end{align*}
for any $\eps'>0$ which completes the proof. The bounds for $S_t$ follows using continuity results on $\Gamma$ and $\Gamma^{-1}$ on $W^{\sigma,p}$ for any $0\le \sigma<3-\frac{d}{2}$.
\end{proof}

\section{Local well-posedness for low regularity initial data}\label{Sec:lowregsolutions}

In this section, the nonlinear equation
\begin{equation}
i\partial_tu=\CH u+f(u)
\end{equation}
is interpreted as a mild formulation associated to the linear propagation $S_t$. For $u=e^Xv$ and initial data $v_0\in L_\mu^2(\IR^d)\cap\Gamma H^2(\IR^d)$, the equation is formaly
\begin{equation}
i\partial_tv=e^{-X}\CH e^Xv+e^{-X}f(e^Xv)
\end{equation}
hence the mild formulation 
\begin{equation}
v_t=S_tv_0-i\int_0^tS_{t-s}e^{-X}f(e^Xv_s)\drm s
\end{equation}
which can be solved using a fixed point formulation. Using that the linear flow satisfies
\begin{equation}
ie^{2X}\partial_tS_t=e^{X}\CH e^XS_t
\end{equation}
in suitable spaces, any such fixed point satisfies
\begin{align}
ie^{2X}\partial_tv_t&=e^X\CH e^XS_tv_0+e^{X}f(e^Xv_t)+\int_0^te^X\CH e^XS_{t-s}e^{-X}f(e^Xv_s)\drm s\\
&=e^X\CH e^Xv_t+e^{X}f(e^Xv_t)
\end{align}
hence it is an energy solutions for $v_0\in L_\mu^2(\IR^d)\cap H^1(\IR^d)$. Since the linear propagator is continuous from $L_\sigma^2(\IR^d)\cap H^\sigma(\IR^d)$ to itself for $\sigma\in(0,1)$, one can search for mild solutions given low regularity initial data. We consider the fixed point map
\begin{equation}
\Phi(v)_t:=S_tv_0-i\int_0^tS_{t-s}e^{-X}f(e^Xv_s)\drm s
\end{equation}
for $v\in\CC([0,T],L_\sigma^2(\IR^d)\cap H^\sigma(\IR^d))$ with $\sigma>0$.

\subsection{Polynomial nonlinearity}

For the polynomial nonlinearity $f(u)=|u|^{m-1}u$ in two dimensions, consider the fixed point map
\begin{equation}
\Phi(v)_t:=S_tv_0-i\int_0^tS_{t-s}e^{(m-1)X}|v_s|^{m-1}v_s\drm s
\end{equation}
for any $v\in L^\infty([0,T],L_\sigma^2(\IR^2)\cap H^\sigma(\IR^2))$ with $\frac{1}{2}<\sigma<1$. We have
\begin{align}
\|\Phi(v)_t\|_{L_\sigma^2\cap H^\sigma}&\lesssim\|v_0\|_{L_\sigma^2\cap H^\sigma}+\int_0^t\|e^{(m-1)X}|v_s|^{m-1}v_s\|_{L_\sigma^2\cap H^\sigma}\drm s\\
&\lesssim\|v_0\|_{L_\sigma^2\cap H^\sigma}+\|e^{(m-1)X}\|_{\CC^\sigma}\int_0^T\|v_s\|_{L^\infty}^{m-1}\|v_s\|_{L_\sigma^2\cap H^\sigma}\drm s\\
&\lesssim\|v_0\|_{L_\sigma^2\cap H^\sigma}+T^{1-\frac{m-1}{p}}\|v\|_{L^p([0,T],L^\infty)}^{m-1}\|v\|_{L^\infty([0,T],L_\sigma^2\cap H^\sigma)}
\end{align}
using Hölder inequality in the last line which also requires $v\in L^p([0,T],L^\infty(\IR^2))$ with $p\ge m-1$. This is where Strichartz inequalities are crucial using the embbeding
\begin{equation}
W^{\gamma,q}(\IR^2)\hookrightarrow L^\infty(\IR^2)
\end{equation}
for $\gamma q>2$ where $W^{s,p}=B_{p,p}^s$. We have
\begin{align}
\|\Phi(v)\|_{L^p([0,T],W^{\gamma,q})}&\lesssim\|v_0\|_{L_\mu^2\cap H^{\gamma+\frac{1}{p}+\kappa}}+\int_0^T\|e^{(m-1)X}|v_t|^{m-1}v_t\|_{L_\mu^2\cap H^{\gamma+\frac{1}{p}+\kappa}}\drm t\\
&\lesssim\|v_0\|_{L_\mu^2\cap H^{\gamma+\frac{1}{p}+\kappa}}+\int_0^T\|v_t\|_{L^\infty}^{m-1}\|v_t\|_{L_\mu^2\cap H^{\gamma+\frac{1}{p}+\kappa}}\drm t\\
&\lesssim\|v_0\|_{L_\mu^2\cap H^{\gamma+\frac{1}{p}+\kappa}}+T^{1-\frac{m-1}{p}}\|v\|_{L^p([0,T],L^\infty)}^{m-1}\|v\|_{L^\infty([0,T],L_\mu^2\cap H^{\gamma+\frac{1}{p}+\kappa})}
\end{align}
for $\mu=\gamma+\frac{1}{p}+\kappa<1$ since $e^{(m-1)X}\in\CC^{1-\kappa}(\IR^2)$ thus we control the norm in the solution space
\begin{equation}
\CS_T^{\sigma,p,q,\gamma}(\IR^2):=L^\infty([0,T],L_\sigma^2\cap H^\sigma)\cap L^p([0,T],W^{\gamma,q})
\end{equation}
for $(p,q)$ a Strichartz pair and $\sigma=\mu=\gamma+\frac{1}{p}+\kappa<1$. We have to optimize the choice of $(p,q)$, that is
\begin{equation}
\gamma<1,\quad \gamma q>2\quad\text{and}\quad\frac{2}{p}+\frac{2}{q}=1
\end{equation}
to have the condition on $\sigma$. We get
\begin{equation}
\sigma>1-\frac{1}{p}
\end{equation}
which gives $\sigma>\frac{1}{2}$ taking $p>2$ as small as possible.

\medskip

\begin{theorem}
Let $\sigma\in(\frac{1}{2},1)$. For any $v_0\in L_\sigma^2(\IR^2)\cap H^\sigma(\IR^2)$ and $(p,q)$ a Strichartz pair such that
\begin{equation}
\sigma>1-\frac{1}{p},
\end{equation} 
there exists a time $T>0$ until which there exists a unique solution
\begin{equation}
v\in\CC([0,T],L_\sigma^2(\IR^2)\cap H^\sigma(\IR^2))\cap L^p([0,T],W^{\gamma,q}(\IR^2))
\end{equation}
to the mild formulation
\begin{equation}
v_t=S_tv_0-i\int_0^tS_{t-s}e^{(m-1)X}|v_s|^{m-1}v_s\drm s
\end{equation}
with $p>m-1$. In particular, there exists a unique mild solution for $\sigma_m<\sigma<1$ with
\begin{equation}
\sigma_m=1-\frac{1}{m-1}
\end{equation}
for any $m\ge2$. Moreover, the solution depends continuously on the initial data $v_0\in L_\sigma^2(\IR^2)\cap H^\sigma(\IR^2)$.
\end{theorem}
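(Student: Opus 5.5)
We prove this by a standard contraction argument for the map $\Phi$ in the solution space
\begin{equation}
\CS_T:=\CC([0,T],L_\sigma^2\cap H^\sigma)\cap L^p([0,T],W^{\gamma,q}),
\end{equation}
with $(p,q)$ a Strichartz pair, $p>m-1$, $\gamma<1$, $\gamma q>2$ and $\sigma=\gamma+\frac{1}{p}+\kappa$. Such a choice of $\gamma,\kappa$ exists exactly when $\sigma>1-\frac1p$: from $\frac2q=1-\frac2p$ the condition $\gamma q>2$ reads $\gamma>1-\frac2p$, so we need $\sigma-\frac1p-\kappa>1-\frac2p$. The two constraints $p>m-1$ and $\sigma>1-\frac1p$ are compatible for $\sigma>1-\frac{1}{m-1}=\sigma_m$ (take $p$ slightly above $\max(m-1,2)$), which gives the second claim.

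\textbf{Linear part.} We bound $S_tv_0$ using the previous Proposition, which gives continuity of $S_t$ on $L_\sigma^2\cap H^\sigma$ for $\sigma\in[0,1]$, together with Theorem \ref{thm:Strichartz} with $\mu=\sigma>\gamma$. Continuity in time in $L_\sigma^2\cap H^\sigma$ follows by density from the energy-space continuity.

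\textbf{Duhamel term.} We use Minkowski's inequality to move the norm inside $\int_0^t$, then apply the same two linear estimates to $S_{t-s}F_s$ with $F_s=e^{(m-1)X}|v_s|^{m-1}v_s$. We need the fractional Leibniz-type bound
\begin{equation}
\|e^{(m-1)X}|v|^{m-1}v\|_{L_\sigma^2\cap H^\sigma}\lesssim\|v\|_{L^\infty}^{m-1}\|v\|_{L_\sigma^2\cap H^\sigma}.
\end{equation}
It follows from $e^{(m-1)X}\in\CC^{1-\kappa}\cap L^\infty$ with $\sigma<1-\kappa$, and from the Moser estimate $\||v|^{m-1}v\|_{H^\sigma}\lesssim\|v\|_{L^\infty}^{m-1}\|v\|_{H^\sigma}$, which is valid for $m\ge2$ since $\sigma<1$ and the map is $C^1$. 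The weight is handled pointwise. Hölder's inequality in time then gives
\begin{equation}
\|\Phi(v)\|_{\CS_T}\lesssim\|v_0\|_{L_\sigma^2\cap H^\sigma}+T^{1-\frac{m-1}{p}}\|v\|_{\CS_T}^m,
\end{equation}
using $W^{\gamma,q}\hookrightarrow L^\infty$.

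\textbf{Contraction.} For the difference we use
\begin{equation}
\big||v|^{m-1}v-|w|^{m-1}w\big|\lesssim(|v|^{m-1}+|w|^{m-1})|v-w|.
\end{equation}
To avoid fractional derivatives of non-smooth differences when $m<3$, we contract in the weaker norm $L^\infty_tL^2\cap L^p_tL^q$, i.e.\ Strichartz at $\gamma=0$, on a ball of $\CS_T$. That ball is closed for this metric by weak-$*$ lower semicontinuity. The resulting bound is $T^{1-\frac{m-1}{p}}\|v-w\|$. Choosing $T$ small in terms of $\|v_0\|$ gives a unique fixed point in the ball. Uniqueness in the whole of $\CS_T$ follows by a continuity argument on short intervals.

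\textbf{Continuous dependence.} The same difference estimate gives Lipschitz dependence in the weak norm. Continuity in $L_\sigma^2\cap H^\sigma$ follows by interpolation with the uniform bound in a slightly higher $\sigma'$, or by a Bona--Smith argument.

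The main obstacle is the Duhamel Strichartz step. Theorem \ref{thm:Strichartz} is stated only for homogeneous data, so it must be applied to $S_{t-s}F_s$ on the shifted interval $[s,T]$, which requires stationarity of the estimate under time translation, and $F_s$ must lie in $L^2_{\sigma}\cap H^{\sigma}$ with the weight $\mu=\sigma>\gamma$. The delicate point is the product of $e^{(m-1)X}$ with the weighted Sobolev norm, which needs $\sigma<1-\kappa$. This is why the upper restriction $\sigma<1$ appears.
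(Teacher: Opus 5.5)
Most of your argument matches the paper's: contraction for $\Phi$ in $\CS_T^{\sigma,p,q,\gamma}$ with $\sigma=\mu=\gamma+\frac1p+\kappa$, the product bound via $e^{(m-1)X}\in\CC^{1-\kappa}$, Minkowski plus the homogeneous estimate of Theorem \ref{thm:Strichartz} for the Duhamel term, and H\"older in time with $p>m-1$. The time shift you flag is harmless, since $S_t$ is autonomous. Your bookkeeping ($\gamma q>2\iff\sigma>1-\frac1p+\kappa$, then $p$ slightly above $\max(m-1,2)$) is correct. The flaw is in the contraction step, where you depart from the paper.

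\textbf{The weaker metric does not close.} Theorem \ref{thm:Strichartz} at $\gamma=0$ is not a lossless $L^2$ estimate; it reads $\|S_tf\|_{L^p([0,1],L^q)}\lesssim\|f\|_{L^2_\mu\cap H^{\frac1p+\kappa}}$ with $\mu\in(0,1)$. So the $L^p_tL^q$ component of $\Phi(v)-\Phi(w)$ is bounded by $\int_0^T\|e^{(m-1)X}(F(v_s)-F(w_s))\|_{L^2_\mu\cap H^{\frac1p+\kappa}}\drm s$, with $F(z)=|z|^{m-1}z$. That needs exactly the fractional derivative of the difference you meant to avoid, and it is not controlled by $\|v-w\|_{L^\infty_tL^2\cap L^p_tL^q}$. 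Hence your claimed bound $T^{1-\frac{m-1}{p}}\|v-w\|$ in that metric does not follow. It could be repaired by contracting in $L^\infty_tL^2$ alone: $S_t$ is bounded on $L^2$, and the $L^p_tL^\infty$ control of $v,w$ comes from the ball.

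\textbf{The detour is unnecessary, and it costs you continuous dependence.} For $m\ge2$ one has $|F'(a)-F'(b)|\lesssim(|a|+|b|)^{m-2}|a-b|$. Write $F(v)-F(w)=G\,(v-w)$ with $G=\int_0^1F'(w+\theta(v-w))\drm\theta$ and use the Gagliardo seminorm for $\sigma\in(0,1)$. This gives
\begin{equation*}
\|F(v)-F(w)\|_{H^\sigma}\lesssim M^{m-1}\|v-w\|_{H^\sigma}+M^{m-2}\big(\|v\|_{H^\sigma}+\|w\|_{H^\sigma}\big)\|v-w\|_{L^\infty},\qquad M=\|v\|_{L^\infty}+\|w\|_{L^\infty}.
\end{equation*}
The weight and the factor $e^{(m-1)X}$ are handled as in your bound for $\Phi(v)$. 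After H\"older in time, and using $W^{\gamma,q}\hookrightarrow L^\infty$ on the last factor, you get $\|\Phi(v)-\Phi(w)\|_{\CS_T}\lesssim T^{1-\frac{m-1}{p}}R^{m-1}\|v-w\|_{\CS_T}$ on the ball of radius $R$. That is a contraction in the full norm, which is what the paper's ``similar computations'' means. It gives Lipschitz dependence on $v_0$ in $L^2_\sigma\cap H^\sigma$ directly.

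Your interpolation route to continuity fails. For data only in $L^2_\sigma\cap H^\sigma$ there is no uniform bound in any $\sigma'>\sigma$, so interpolating with a weak-norm Lipschitz bound only yields continuity in $H^{\sigma''}$ for $\sigma''<\sigma$. Only the Bona--Smith argument, which you name but do not carry out, would rescue it.
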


\medskip

\begin{proof}
The proof follows from a contraction argument with the bounds explained above. One can choose $R>0$ such that
\begin{equation}
\Phi:B(0,R)_{\CS_T^{\sigma,p,q,\gamma}(\IR^2)}\to B(0,R)_{\CS_T^{\sigma,p,q,\gamma}(\IR^2)}
\end{equation}
is a contraction for $T=T(R)>0$ small enough. Indeed, since
\begin{equation}
\|\Phi(v)\|_{\CS_T^{\sigma,p,q,\gamma}(\IR^2)}\le C\|v_0\|_{L_\sigma^2(\IR^2)\cap H^\sigma(\IR^2)}+CT^{\frac{p-m+1}{p}}\|v\|_{\CS_T^{\sigma,p,q,\gamma}(\IR^2)}^m
\end{equation}
with $C=C(\Xi)>0$, taking $R=2C\|v_0\|_{L_\sigma^2(\IR^2)\cap H^\sigma(\IR^2)}$ and $T\le R^{-m\frac{p}{p-m+1}}$ gives that $\Phi$ sends the ball $B(0,R)$ to itself. Then taking $T$ small enough gives that $\Phi$ is a contraction with similar computations for $p>m-1$.
\end{proof}

\subsection{Hartree nonlinearity}

For the Hartree nonlinearity in three dimensions, consider the fixed point map
\begin{equation}
\Phi(v)_t:=S_tv_0-i\int_0^tS_{t-s}v_s\cdot V_\beta*|e^{X}v_s|^2\drm s
\end{equation}
for any $v\in L^\infty([0,T],L_\sigma^2(\IR^2)\cap H^\sigma(\IR^2))$. Strichartz inequalities from Theorem \ref{thm:Strichartz} gives
\begin{equation}
\|S_tv_0\|_{L^2([0,T],W^{\gamma,6}(\IR^3))}\lesssim\|v_0\|_{L_{\frac{3}{4}+\gamma+\kappa}^2(\IR^3)\cap H^{\frac{3}{4}+\gamma+\kappa}(\IR^3)}
\end{equation}
for $\gamma\in(0,\frac{1}{4})$ with $(2,6)$ a Strichartz pair in three dimensions. We consider the solution space
\begin{equation}
L^\infty([0,T],L_\sigma^2\cap H^\sigma)\cap L^2([0,T],W^{\gamma,6})
\end{equation}
with $\sigma=\frac{3}{4}+\gamma<1$. We have
\begin{equation}
W^{\gamma,6}(\IR^3)\hookrightarrow L^{\frac{6}{1-2\gamma}}(\IR^3)
\end{equation}%$W^{\beta,1}(\IR^3)\hookrightarrow L^{\frac{3}{3-\beta}}(\IR^3)$ for $0\le\beta\le3$
which goes not allow to obtain $L^\infty(\IR^3)$. We have
\begin{align}
\|\Phi(&v)_t\|_{H^\sigma}\lesssim\|v_0\|_{L_\sigma^2\cap H^\sigma}+\int_0^t\|v_s\cdot V_\beta*|e^{X}v_s|^2\|_{L_\sigma^2\cap H^\sigma}\drm s\\
&\lesssim\|v_0\|_{L_\sigma^2\cap H^\sigma}+\int_0^T\big(\|V_\beta*|e^{X}v_s|^2\|_{L^\infty}\|v_s\|_{L_\sigma^2\cap H^\sigma}+\|v_s\|_{L^{\frac{6}{1-2\gamma}}}\|V_\beta*|e^{X}v_s|^2\|_{L_\sigma^2\cap W^{\sigma,\frac{3}{1+\gamma}}}\big)\drm s\\
&\lesssim\|v_0\|_{L_\sigma^2\cap H^\sigma}+\int_0^T\|V_\beta\|_{W^{\beta,1}}\|e^{2X}|v_s|^2\|_{W^{\delta-\beta+\eps,\frac{3}{\delta}}}\|v_s\|_{L_\sigma^2\cap H^\sigma}\drm s\\
&\hspace{2cm}+\int_0^T\|v_s\|_{L^{\frac{6}{1-2\gamma}}}\|V_\beta\|_{W^{\beta,1}}\|e^{2X}|v_s|^2\|_{L_\sigma^2\cap W^{\sigma-\beta,\frac{3}{1+\gamma}}}\drm s\\
&\lesssim\|v_0\|_{L_\sigma^2\cap H^\sigma}+\|v\|_{L^\infty([0,T],L_\sigma^2\cap H^\sigma)}\int_0^T\|v_s^2\|_{W^{\delta-\beta+\eps,\frac{3}{\delta}}}\drm s+\int_0^T\|v_s\|_{L^{\frac{6}{1-2\gamma}}}\|v_s^2\|_{L_\sigma^2\cap W^{\sigma-\beta,\frac{3}{1+\gamma}}}\drm s
\end{align}
for $\eps>0$ small using fractionnal Leibniz rule, Young inequality and assuming that $0\le\delta-\beta<\frac{1}{2}$ as well as $\sigma-\beta<\frac{1}{2}$ since $X\in\CC^{\frac{1}{2}-\kappa}(\IR^3)$. Using the bounds
\begin{equation}
\|v_s^2\|_{L_\mu^2\cap W^{\sigma-\beta,\frac{3}{1+\gamma}}}\lesssim\|v_s\|_{L^{\frac{6}{1-2\gamma}}}\|v_s\|_{L_\mu^2\cap W^{\sigma-\beta,\frac{6}{1+4\gamma}}}
\end{equation}
and
\begin{equation}
\|v_s^2\|_{L_\mu^2\cap W^{\delta-\beta+\eps,\frac{3}{\delta}}}\lesssim\|v_s\|_{L^{\frac{6}{1-2\gamma}}}\|v_s\|_{L_\mu^2\cap W^{\delta-\beta+\eps,\frac{6}{2\delta+2\gamma-1}}},
\end{equation}
we get
\begin{align}
\|\Phi(v)_t\|_{H^\sigma}&\lesssim\|v_0\|_{L_\mu^2\cap H^\sigma}+\|v\|_{L^\infty([0,T],L_\mu^2\cap H^\sigma)}\int_0^T\|v_s\|_{L^{\frac{6}{1-2\gamma}}}\|v_s\|_{L_\mu^2\cap W^{\delta-\beta+\eps,\frac{6}{2\delta+2\gamma-1}}}\drm s\\
&\hspace{2cm}+\int_0^T\|v_s\|_{L^{\frac{6}{1-2\gamma}}}^2\|v_s\|_{L_\mu^2\cap W^{\sigma-\beta,\frac{6}{1+4\gamma}}}\drm s.
\end{align}
At this point, taking $\delta-\beta+\eps=\gamma$ or $\frac{6}{2\delta+2\gamma-1}=6$ gives the condition
\begin{equation}
\beta>1-2\gamma
\end{equation}
which allows to cover all $\beta>\frac{1}{2}$ taking $\gamma>0$ small. For the other norm, Strichartz inequalites from Theorem \ref{thm:Strichartz} gives
\begin{equation}
\|\Phi(v)\|_{L^2([0,T],W^{\gamma,6})}\lesssim\|v_0\|_{L_\mu^2\cap H^{\frac{3}{4}+\gamma+\kappa}}+\int_0^T\|v_t\cdot V_\beta*|e^Xv_s|^2\|_{L_\mu^2\cap H^{\frac{3}{4}+\gamma+\kappa}}\drm t
\end{equation}
for $\mu=\frac{3}{4}+\gamma+\kappa<1$ which is bounded as before for $\sigma=\frac{3}{4}+\gamma+\kappa$. Following the previous proof, one can find a contraction using these bounds to obtain the following Theorem.

\medskip

\begin{theorem}
Let $\sigma=\frac{3}{4}+\gamma$ with $\gamma\in(0,\frac{1}{4})$. For any $v_0\in L_\sigma^2(\IR^2)\cap H^\sigma(\IR^2)$ and
\begin{equation}
\beta>1-2\gamma,
\end{equation} 
there exists a time $T>0$ until which there exists a unique solution
\begin{equation}
v\in\CC([0,T],L_\sigma^2(\IR^2)\cap H^\sigma(\IR^2))\cap L^2([0,T],W^{\gamma,6}(\IR^2))
\end{equation}
to the mild formulation
\begin{equation}
v_t=S_tv_0-i\int_0^tS_{t-s}v_s\cdot V_\beta*|e^Xv_s|^2\drm s
\end{equation}
where $V_\beta\in W^{\beta,1}$. Moreover, the solution depends continuously on the initial data $v_0\in L_\sigma^2(\IR^2)\cap H^\sigma(\IR^2)$. In particular, there exists a unique mild solution for $\sigma_\beta<\sigma<1$ with
\begin{equation}
\sigma_\beta=\frac{5}{4}-\frac{\beta}{2}
\end{equation}
for any $\beta>\frac{1}{2}$.
\end{theorem}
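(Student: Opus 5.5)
The plan is a Banach fixed point for the map $\Phi$ on a ball of the solution space
\begin{equation}
\CS_T:=L^\infty([0,T],L_\sigma^2\cap H^\sigma)\cap L^2([0,T],W^{\gamma,6}),\qquad \sigma=\tfrac34+\gamma+\kappa,
\end{equation}
following the polynomial case. We first fix $\gamma\in(0,\frac14)$ with $\beta>1-2\gamma$, choose $\delta$ with $\delta-\beta+\eps=\gamma$, and fix $\kappa>0$ small so that $\sigma<1$.

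\textbf{Step 1 (energy-type bound).} The continuity of $S_t$ on $L_\sigma^2\cap H^\sigma$ (proposition on the linear flow) controls the $L^\infty_tH^\sigma$ part of $\Phi(v)$. The nonlinearity is then handled by the computation preceding the statement: fractional Leibniz, Young's inequality with $V_\beta\in W^{\beta,1}$, the fact that $e^{2X}\in\CC^{\frac12-\kappa}$ and $\sigma-\beta<\frac12$, and the embedding $W^{\gamma,6}\hookrightarrow L^{\frac{6}{1-2\gamma}}$. The remaining norms $\|v_s\|_{W^{\sigma-\beta,\frac{6}{1+4\gamma}}}$ and $\|v_s\|_{W^{\gamma,6}}$ are bounded by Sobolev embedding from $H^\sigma$ and by the Strichartz norm respectively; this is exactly where $\beta>1-2\gamma$ enters. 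Applying Cauchy--Schwarz in time to $\int_0^T\|v_s\|_{L^{\frac{6}{1-2\gamma}}}^2\,\drm s$ gives only a bounded factor, so in the $L^\infty_t$ estimate I would bound one factor $\|v_s\|_{L^{\frac{6}{1-2\gamma}}}$ by $\|v_s\|_{H^\sigma}$. For this I need $\sigma\ge\frac32-\frac{3(1-2\gamma)}{6}=1+\gamma$, which fails. I would therefore rely on a small power of $T$ coming from Hölder's inequality: keep one Strichartz factor, $\int_0^T\|v_s\|_{W^{\gamma,6}}\,\drm s\le T^{1/2}\|v\|_{L^2W^{\gamma,6}}$, and control the other factor at a slightly lower Lebesgue exponent by interpolating between $H^\sigma$ and $W^{\gamma,6}$. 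This yields
\begin{equation}
\|\Phi(v)\|_{\CS_T}\le C\|v_0\|_{L_\sigma^2\cap H^\sigma}+CT^{\theta}\|v\|_{\CS_T}^3
\end{equation}
for some $\theta>0$, with $C$ depending on the noise.

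\textbf{Step 2 (Strichartz bound).} For the $L^2W^{\gamma,6}$ part, Theorem~\ref{thm:Strichartz} with $(p,q)=(2,6)$ and $d=3$ gives $\|S_tv_0\|_{L^2W^{\gamma,6}}\lesssim\|v_0\|_{L_\mu^2\cap H^{\frac34+\gamma+\kappa}}$. The admissibility constraints are $\gamma<2-\frac34$ and $\mu=\sigma\in(\gamma,1)$. The Duhamel term is treated by Minkowski in time, applying Strichartz to $S_{t-s}$ for each $s$, which reduces it to the same nonlinear $L_\sigma^2\cap H^\sigma$ bound as in Step 1. The weighted $L_\sigma^2$ part of the nonlinearity is easy because $V_\beta*|e^Xv|^2$ lies in $L^\infty$ by the same Young estimate.

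\textbf{Step 3 (contraction and continuity).} The difference $\Phi(v)-\Phi(v')$ is estimated by the trilinear structure $v\,V_\beta*(e^{2X}|v|^2)$, expanded as a sum of terms each linear in $v-v'$, giving a factor $CT^\theta R^2$. With $R=2C\|v_0\|$ and $T$ small, $\Phi$ maps the ball $B(0,R)$ to itself and is a contraction. Continuous dependence follows from the same difference estimate. Time continuity in $L_\sigma^2\cap H^\sigma$ follows from strong continuity of $S_t$, using density of $L_1^2\cap H^1$. Finally, taking $\gamma$ close to $\frac{1-\beta}{2}$ shows that every $\sigma>\frac34+\frac{1-\beta}{2}=\sigma_\beta$ is admissible.

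The main obstacle is extracting the positive power $T^\theta$ in the term cubic in $L^{\frac{6}{1-2\gamma}}$-type norms. The interpolation exponents, the Leibniz splitting and the range $\beta>1-2\gamma$ all have to be balanced simultaneously there, possibly at the cost of shrinking $\kappa$ and $\gamma$ slightly.
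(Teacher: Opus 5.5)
Your proposal follows the paper's own route: a contraction in $L^\infty_t(L^2_\sigma\cap H^\sigma)\cap L^2_tW^{\gamma,6}$ built from the boundedness of $S_t$ on $L^2_\sigma\cap H^\sigma$, the $(2,6)$ Strichartz bound of Theorem~\ref{thm:Strichartz} applied to the Duhamel term via Minkowski, and the fractional Leibniz/Young estimates with $V_\beta\in W^{\beta,1}$ from the computation preceding the theorem, where $\beta>1-2\gamma$ is exactly what lets $H^\sigma$ and the Strichartz norm control the remaining factors. Your extra step is correct and makes explicit what the paper leaves implicit in ``following the previous proof'': you extract a factor $T^\theta$ by interpolating one $L^{\frac{6}{1-2\gamma}}$ factor between $H^\sigma$ and $W^{\gamma,6}$, paid for by the slack in the strict inequality $\beta>1-2\gamma$; note that the same device is also needed for the $\|V_\beta*|e^Xv|^2\|_{L^\infty}$ term, which is likewise quadratic in the Strichartz norm.
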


\subsection{Uniqueness of energy solutions}

Our definition of energy solutions is a priori different than mild solutions, one has that a mild solution in the energy space is indeed an energy solution however the opposite is not immediate. Uniqueness of energy solutions direclty follows from Strichartz inequalities with Gronwall's lemma. In two dimensions, we consider any polynomial nonlinearity $f(u)=|u|^{m-1}u$ with $m\ge1$.

\medskip

\begin{corollary}
Let $0<\mu\le 1$ and $m\ge1$. For $u_0\in e^X(H^1(\IR^2)\cap L_\mu^2(\IR^2))$, consider the unique solution $u_\eps$ to the regularized equation associated to \eqref{NLS}. Then there exists a unique function $u\in\CC(\IR,e^X(H^1(\IR^2)\cap L_\mu^2(\IR^2)))$ such that
\begin{equation}
\lim_{\eps\to0}\ \sup_{t\in[-T,T]}\|e^{-X_\eps}u_\eps(t)-e^{-X}u(t)\|_{H^1(\IR^2)\cap L_\mu^2(\IR^2)}=0
\end{equation}
for any $T>0$.
\end{corollary}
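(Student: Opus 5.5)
We reduce the statement to two facts: every energy solution is unique, and the whole family $(v_\eps)$ converges, not just a subsequence.

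\textbf{Step 1 (existence of a limit point).} The existence theorem for \eqref{NLS} on $\IR^2$ gives bounds on $v_\eps=e^{-X_\eps}u_\eps$ in $L^\infty([-T,T],H^1\cap L_\mu^2)$, uniform in $\eps$. Compactness then yields subsequential limits $v$ in $\CC([-T,T],H^\sigma_\delta)$ for every $\sigma<1$ and $\delta<\mu$. Each such limit $u=e^Xv$ is an energy solution in the sense of Definition \ref{def:EnergySolutions}.

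\textbf{Step 2 (uniqueness of energy solutions).} Let $v,v'$ be two energy solutions with the same $v_0$ and set $w=v-v'$. Test the weak formulation against $\overline w$ and take the imaginary part. The operator $-(\nabla e^{2X}\nabla)+e^{2X}Y$ is symmetric, so the linear part drops out exactly, as in the proof of uniqueness for the linear flow. This gives
\begin{equation}
\frac{\drm}{\drm t}\int|w|^2e^{2X}\lesssim\int e^{(m+1)X}\big(|v|^{m-1}+|v'|^{m-1}\big)|w|^2 .
\end{equation}
Since $e^{\pm X}\in L^\infty$, Grönwall's lemma gives $w=0$ provided $v,v'\in L^{m-1}([0,T],L^\infty(\IR^2))$.

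The pairing with $\overline w$ requires justification, because $\partial_t v$ lives only in the dual space. We would handle this by testing against a mollification $\varphi=\rho_n*w$ and passing to the limit $n\to\infty$.

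\textbf{Step 3 (the $L^p_tL^\infty_x$ bound, the main obstacle).} We must control $v$ in $L^p([0,T],L^\infty)$ with $p\ge m-1$. The approach is to show that an energy solution coincides with the mild solution of the local well-posedness theorem.

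The initial data lies in $H^1\cap L^2_\mu\subset L^2_\sigma\cap H^\sigma$ for $\sigma\le\mu$. Taking $\sigma\in(\sigma_m,1)\cap(0,\mu]$, the theorem provides a mild solution in $\CS_T^{\sigma,p,q,\gamma}$. By Theorem \ref{thm:Strichartz} and $W^{\gamma,q}\hookrightarrow L^\infty$ with $\gamma q>2$, this solution lies in $L^p_tL^\infty_x$.

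When $\mu$ is small, $\sigma\le\mu$ may force $\sigma<\sigma_m$. In that case we would use the energy bound to get $v\in L^\infty_t(L^2_\mu\cap H^1)$ with arbitrary $\sigma<1$. Higher moments can be propagated as in the existence proof, and we would use them to justify working with a $\sigma$ close to $1$.

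It then remains to show that the energy solution equals this mild one. Consider the Duhamel formula obtained by applying $S_{t-s}$ to the weak formulation. The pairing is against $\varphi=S_{t-s}^*\psi$, which stays in $H^1\cap L^2_\mu$ by the continuity of $S_t$. This shows that any energy solution satisfies the mild formulation.

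Next, run the contraction of the local well-posedness theorem in the class $L^\infty_t(L^2_\sigma\cap H^\sigma)$. Rather than applying the contraction directly, we would estimate the difference of two mild solutions using the Strichartz estimate on the inhomogeneous term. This estimate comes from the $T^*$-type bound obtained from Theorem \ref{thm:Strichartz} and Minkowski's inequality. This step is delicate, since the energy solution is a priori only in $L^\infty_tH^1$. In two dimensions $H^1\not\subset L^\infty$, so Strichartz is really needed here.

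Iterating in time with the uniform $H^1$ bound extends the identification globally. Uniqueness then follows on all of $\IR$.

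\textbf{Step 4 (convergence of the full family).} Every subsequence has a further subsequence converging to the same unique $u$, so the whole family $v_\eps$ converges.

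To upgrade to convergence in $H^1\cap L^2_\mu$, use conservation of the modified energy and mass. These give convergence of the norms $\int|\nabla v_\eps|^2e^{-2X_\eps}$ to the corresponding quantity for $v$; the nonlinear and $Y$ terms converge by the strong $H^\sigma_\delta$ convergence. Weak convergence together with convergence of norms in the equivalent Hilbert norms gives strong convergence. Uniformity in $t$ then follows from equicontinuity.
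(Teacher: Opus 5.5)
Your architecture (subsequential limits, uniqueness of energy solutions, then the subsequence principle) differs from the paper's. The paper shows directly that $(v_\eps)$ is Cauchy in $L^2$ by a Gronwall estimate on $v_\eps-v_{\eps'}$: it bounds the operator difference $e^{X_{\eps'}}\CH_{\eps'}e^{X_{\eps'}}-e^{X_\eps}\CH_\eps e^{X_\eps}$ in $H^{-1}$ and invokes $L^p_tL^\infty_x$ bounds on $v_\eps$ that are uniform in $\eps$. Your route has the merit of needing Strichartz estimates only for the limiting flow $S_t$, which is what Theorem \ref{thm:Strichartz} actually provides.

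\textbf{The gap is in Step 3.} The local well-posedness theorem works in $L^2_\sigma\cap H^\sigma$, with weight equal to regularity. It needs $\sigma>1-\frac1p$ with $p>2$ and $p>m-1$, that is $\sigma>\max(\frac12,\sigma_m)$. So it applies to $v_0\in H^1\cap L^2_\mu$ only when $\mu>\max(\frac12,\sigma_m)$. Your fallback for smaller $\mu$, propagating higher moments, cannot work. The weighted estimate in the existence proof only propagates weights $\eta\le\mu$ already carried by $v_0$. Moreover, $v(0)=v_0$ itself need not lie in $L^2_\sigma$ for any $\sigma>\mu$, so no class $L^\infty_tL^2_\sigma$ with $\sigma>\mu$ can contain the solution. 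Hence for every $\mu\le\frac12$ (and for large $m$, every $\mu\le 1-\frac1{m-1}$) you obtain no $L^{m-1}_tL^\infty_x$ bound, and Step 2 does not close.

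\textbf{The missing idea.} The detour through the fixed point is unnecessary. Once an energy solution satisfies the Duhamel formula (your pairing argument with $S_{t-s}^*\psi$), apply Theorem \ref{thm:Strichartz} and Minkowski's inequality to it directly.
\begin{itemize}
\item Take $p$ slightly above $2$ and $\gamma$ slightly above $\frac2q=1-\frac2p$, so $\gamma$ is small. The only weight requirement is then $\mu'>\gamma$.
\item Since $v\in L^\infty_t(H^1\cap L^2_\mu)$, the forcing $e^{(m-1)X}|v|^{m-1}v$ lies in $L^\infty_t(L^2_{\mu'}\cap H^{s})$ for all $s<1$ and some $\mu'\in(0,\mu)$.
\item Hence $v\in L^p_tW^{\gamma,q}_x$ with $\gamma q>2$, for every $\mu>0$.
\end{itemize}
When $m-1>p$, interpolate with the energy bound. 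The space $[H^1,W^{\gamma,q}]_\theta$ has excess regularity $\theta(\gamma-\frac2q)>0$ over $L^\infty(\IR^2)$, so $\|v\|_{L^\infty}\lesssim\|v\|_{H^1}^{1-\theta}\|v\|_{W^{\gamma,q}}^{\theta}$. Choosing $\theta(m-1)\le p$ gives $\int_0^T\|v\|_{L^\infty}^{m-1}\drm t<\infty$, which is all Step 2 needs.

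\textbf{A smaller omission in Step 4.} Mass and energy conservation only upgrade to strong convergence in $H^1$. For $L^2_\mu$ you must also pass to the limit in the integrated weighted moment identity from the existence proof. This is possible once $\nabla v_\eps(t)\to\nabla v(t)$ strongly in $L^2$.
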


\medskip

\begin{proof}
Consider the unique solution to the regularized equation
\begin{equation}
ie^{2X_\eps}\partial_tv_\eps=e^{X_\eps}\CH_\eps e^{X_\eps}v_\eps+e^{(m+1)X_\eps}|v_\eps|^{m-1}v_\eps
\end{equation}
with $v_\eps(0)=v_0\in H^1(\IR^2)\cap L_\mu^2(\IR^2)$. To complete the proof, we need to show that $v_\eps$ has a unique limit as $\eps$ goes to $0$. Let $\eps>\eps'>0$ and consider $w=u_{\eps}-u_{\eps'}$. We have
\begin{equation}
ie^{2X_\eps}\partial_tw=e^{X_\eps}\CH_\eps e^{X_\eps}w+(e^{X_\eps'}\CH_{{\eps'}}e^{X_{\eps'}}-e^{X_\eps}\CH_\eps e^{X_\eps})v_{\eps'}+e^{(m+1)X_\eps}|v_\eps|^{m-1}v_\eps-e^{(m+1)X_{\eps'}}|v_{\eps'}|^{m-1}v_{\eps'}
\end{equation}
which we multply by $\overline{w}(t,x)$ and integrate in space. We get
\begin{align}
\frac{1}{2}\frac{\drm}{\drm t}\int_{\IR^2}|w(t,x)|^2\drm x&=\text{Im}\Big(\int_{\IR^2}(e^{X_{\eps'}}\CH_{\eps'}e^{X_{\eps'}}v_{\eps'}-e^{X_\eps}\CH_\eps e^{X_\eps}v_{\eps'})(t,x)\overline{w}(t,x)\drm x\\
&\quad+\Big(\int_{\IR^2}(e^{(m+1)X_\eps}|v_\eps|^{m-1}v_\eps-e^{(m+1)X_{\eps'}}|v_{\eps'}|^{m-1}v_{\eps'})(t,x)\overline{w}(t,x)\drm x\Big)\\
&\lesssim\|(e^{X_{\eps'}}\CH_{\eps'}e^{X_{\eps'}}-e^{X_\eps}\CH_\eps e^{X_\eps})v_{\eps'}\|_{H^{-1}(\IR^2)}\|w\|_{H^1(\IR^2)}\\
&\quad+\Big(1+\|v_\eps\|_{L^\infty(\IR^2)}^{m-1}+\|v_{\eps'}(t)\|_{L^\infty(\IR^2)}^{m-1}\Big)\int_{\IR^2}|w(t,x)|^2\drm x
\end{align}
hence Gronwall's lemma
\begin{equation}
\int_{\IR^2}|w(T,x)|^2\drm x\lesssim \big(M_{\eps,\eps'}+\int_{\IR^2}|w(0,x)|^2\drm x\big)e^{c\int_0^T(\|v_\eps\|_{L^\infty(\IR^2)}^{m-1}+\|v_{\eps'}(t)\|_{L^\infty(\IR^2)}^{m-1})\drm t}
\end{equation}
for any $T>0$ and
\begin{equation}
M_{\eps,\eps'}=\|(e^{X_{\eps'}}\CH_{\eps'}e^{X_{\eps'}}-e^{X_\eps}\CH_\eps e^{X_\eps})v_{\eps'}\|_{H^{-1}(\IR^2)}(\|v_\eps\|_{H^1(\IR^2)}+\|v_{\eps'}\|_{H^1(\IR^2)})
\end{equation}
since $\|w\|_{H^1(\IR^2)}\le\|v_\eps\|_{H^1(\IR^2)}+\|v_{\eps'}\|_{H^1(\IR^2)}$. Since
\begin{equation}
e^{X_{\eps'}}\CH_{\eps'}e^{X_{\eps'}}-e^{X_\eps}\CH_\eps e^{X_\eps}=-\nabla(e^{2X_{\eps'}}-e^{2X_\eps})\nabla+e^{2X_{\eps'}}Y_{\eps'}-e^{2X_\eps}Y_\eps
\end{equation}
and $v_{\eps'}$ being uniformly bounded in $H^1(\IR^2)\cap L_\mu^2(\IR^2)$, we get that $(e^{X_\eps'}\CH_{\eps'}e^{X_\eps'}-e^{X_\eps}\CH_\eps e^{X_\eps})v_{\eps'}$ converges to $0$ in $H^{-1}(\IR^2)$ as $\eps>\eps'>0$ goes to $0$ thus $M_{\eps,\eps'}$ goes to $0$. With our Strichartz inequalities, $v_\eps$ is uniformly bounded in $L^p([0,T],L^\infty(\IR^2))$ thus and we have
\begin{equation}
w(0)=(e^{X_\eps}-e^{X_{\eps'}})v_0
\end{equation}
which converges to $0$ in $L^2(\IR^2)$ as $\eps>\eps'>0$ goes to $0$. This proves that $(v_\eps)_{\eps>0}$ is a Cauchy sequence in $L^2(\IR^2)$ and complete the proof.
\end{proof}

In three dimensions, we consider \ref{Hartree} equation where $f(u)=u\cdot V_\beta*|u|^2$ with $V_\beta\in W^{\beta,1}$ for $\beta>\frac{1}{2}$. Following the previous proof, we also get uniqueness of energy solutions for \eqref{Hartree} equation.

\medskip

\begin{corollary}
Let $0<\mu\le 1$ and $\beta>\frac{1}{2}$. For $u_0\in e^X(H^1(\IR^2)\cap L_\mu^2(\IR^2))$, consider the unique solution $u_\eps$ to the regularized equation associated to \eqref{Hartree}. Then there exists a unique function $u\in\CC(\IR,e^X(H^1(\IR^2)\cap L_\mu^2(\IR^2)))$ such that
\begin{equation}
\lim_{\eps\to0}\ \sup_{t\in[-T,T]}\|e^{-X_\eps}u_\eps(t)-e^{-X}u(t)\|_{H^1(\IR^2)\cap L_\mu^2(\IR^2)}=0
\end{equation}
for any $T>0$.
\end{corollary}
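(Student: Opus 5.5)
We follow the proof of the previous corollary for the polynomial nonlinearity on $\IR^2$, replacing $f(u)=|u|^{m-1}u$ by the Hartree term $f(u)=u\cdot V_\beta*|u|^2$. As worded, the statement is set on $\IR^2$, so the noise objects are the two-dimensional ones: $X_\eps\to X$ in $\CC_\delta^{1-\kappa}$ and $Y_\eps\to Y$ in $\CC_{-\delta}^{-\kappa}$. The regularized solutions $v_\eps=e^{-X_\eps}u_\eps$ solve
\begin{equation}
ie^{2X_\eps}\partial_tv_\eps=e^{X_\eps}\CH_\eps e^{X_\eps}v_\eps+e^{2X_\eps}v_\eps\cdot V_\beta*|e^{X_\eps}v_\eps|^2
\end{equation}
with $v_\eps(0)=v_0=e^{-X}u_0$.

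\textbf{Step 1: uniform energy bounds.} We rerun the existence proof on $\IR^2$. The modified mass is conserved. In the modified energy, the Hartree term $\frac14\int|u_\eps|^2\,V_\beta*|u_\eps|^2$ is nonnegative since $V_\beta\ge0$, so it can be dropped on the right-hand side exactly as in the three-dimensional Hartree existence theorem. At $t=0$ this term is controlled by Young's inequality,
\begin{equation}
\|V_\beta\|_{L^1}\|v_0\|_{L^4}^4\lesssim\|V_\beta\|_{W^{\beta,1}}\|v_0\|_{H^1}^4 .
\end{equation}
The weighted-norm argument combined with the interpolation bound on $\int|v_\eps|^2Y_\eps e^{-2X_\eps}$ then gives
\begin{equation}
\sup_\eps\sup_{t\in[-T,T]}\|v_\eps(t)\|_{H^1\cap L_\mu^2}\lesssim_T 1 .
\end{equation}

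\textbf{Step 2: uniform Strichartz control.} Theorem~\ref{thm:Strichartz} on $\IR^2$, together with the Duhamel formula with respect to $S_t$, gives a uniform bound on $v_\eps$ in $L^p([0,T],W^{\gamma,q})\hookrightarrow L^p([0,T],L^\infty)$ with $\gamma q>2$ and $\gamma+\frac1p+\kappa<1$. This is the same bound used in the polynomial case with $m=3$. The Hartree nonlinearity in $H^{\gamma+\frac1p+\kappa}\cap L^2_\mu$ is handled by the fractional Leibniz rule, using $\|V_\beta*g\|_{W^{s,r}}\le\|V_\beta\|_{L^1}\|g\|_{W^{s,r}}$ and $e^{X}\in\CC^{1-\kappa}$. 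We need the Strichartz bounds uniformly in $\eps$, so we apply them to the regularized operators $\CH^\sharp_\eps$ built from the regularized noise. This uniformity should follow from the stochastic bounds in the Appendix.

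\textbf{Step 3: Cauchy property.} For $\eps>\eps'>0$, set $w=v_\eps-v_{\eps'}$, test the difference equation against $\overline w$ and take the imaginary part. The linear mismatch term is bounded by $M_{\eps,\eps'}$ exactly as before, and $M_{\eps,\eps'}\to0$ in $H^{-1}$ uniformly on bounded sets of $H^1\cap L_\mu^2$. For the nonlinear difference, write
\begin{equation}
v\,V*|u|^2-v'\,V*|u'|^2=w\,V*|u|^2+v'\,V*\big((u-u')\overline u+u'\overline{(u-u')}\big).
\end{equation}
Also $u-u'=e^{X_\eps}w+(e^{X_\eps}-e^{X_{\eps'}})v_{\eps'}$, where the second term tends to $0$ in $L^2$ uniformly in time. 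Using Young's inequality, $\|V_\beta\|_{L^1}<\infty$ and $e^{\pm X}\in L^\infty$, we obtain
\begin{equation}
\frac{\drm}{\drm t}\|w\|_{L^2}^2\lesssim\big(1+\|v_\eps\|_{L^\infty}^2+\|v_{\eps'}\|_{L^\infty}^2\big)\big(\|w\|_{L^2}^2+o_{\eps,\eps'}(1)\big).
\end{equation}
Since Step 2 gives $\|v_\eps\|_{L^\infty}^2\in L^1([0,T])$ uniformly, Gronwall's lemma shows that $(v_\eps)$ is Cauchy in $\CC([-T,T],L^2)$.

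\textbf{Step 4: identification of the limit.} The uniform $H^1\cap L^2_\mu$ bound upgrades convergence to $H^\sigma_\delta$ by interpolation. The limit is an energy solution by the existence theorem and is unique, since any other limit point satisfies the same Gronwall estimate. Convergence in the full norm $H^1\cap L^2_\mu$, as stated, would additionally require showing that the energy passes to the limit, using the conservation of energy and the convergence of the lower-order terms.

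\textbf{Main obstacle.} The key difficulty is the uniformity in $\eps$ of the Strichartz bound in Step 2. Without it, Gronwall's lemma does not close with $\eps$-independent constants.
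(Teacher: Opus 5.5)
\textbf{The gap is the dimension.} The $\IR^2$ in the statement is a typo carried over from the preceding corollary. Several things show the intended setting is $\IR^3$:
\begin{itemize}
\item \eqref{Hartree} is posed on $\IR^3$ throughout the paper (introduction, existence theorem, Section~\ref{Sec:lowregsolutions}).
\item The main theorem of the introduction announces uniqueness for \eqref{Hartree} on $\IR^3$ with $\beta>\frac12$.
\item $\beta>\frac12$ is exactly the three-dimensional threshold $\beta>1-2\gamma$, $\gamma<\frac14$, of the Hartree local well-posedness result.
\end{itemize}
The warning sign in your proposal is that $\beta>\frac12$ is never used; you only need $V_\beta\in L^1$.

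On $\IR^3$ your Steps 2 and 3 fail. First, $H^1(\IR^3)\not\hookrightarrow L^\infty(\IR^3)$. Second, Theorem~\ref{thm:Strichartz} cannot produce an $L^p_tL^\infty_x$ bound from $H^1$ data: $W^{\gamma,q}(\IR^3)\hookrightarrow L^\infty$ needs $\gamma>\frac3q=\frac32-\frac2p$, which is incompatible with $\frac{3}{2p}+\gamma<1$. The natural substitute is the pair $(2,6)$ with $\gamma<\frac14$, which gives $v_\eps\in L^2([0,T],W^{\gamma,6})\hookrightarrow L^2([0,T],L^{\frac{6}{1-2\gamma}})$. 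So $\|v_\eps\|_{L^\infty}^2\in L^1([0,T])$ is not available and your Gronwall inequality does not close. The bound you import from the $m=3$ polynomial case is a purely two-dimensional fact.

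\textbf{The missing idea.} You need to let the smoothing of the potential make up for the lack of $L^\infty$ control. For $\frac12<\beta\le1$ (larger $\beta$ reduces to $\beta=1$) one has $W^{\beta,1}(\IR^3)\hookrightarrow L^{\frac{3}{3-\beta}}(\IR^3)$. With $\tilde w=u_\eps-u_{\eps'}$, Young and H\"older then give
\begin{align*}
\|V_\beta*|u|^2\|_{L^\infty}&\lesssim\|V_\beta\|_{W^{\beta,1}}\|u\|_{L^{6/\beta}}^2,\\
\big|\langle v'\,V_\beta*(\tilde w\,\overline{u}),w\rangle\big|&\lesssim\|V_\beta\|_{W^{\beta,1}}\|v'\|_{L^{6/\beta}}\|u\|_{L^{6/\beta}}\|\tilde w\|_{L^2}\|w\|_{L^2},
\end{align*}
and similarly for the term with $u'$. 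Gronwall then closes as soon as $u_\eps$ is bounded in $L^2([-T,T],L^{6/\beta}(\IR^3))$ uniformly in $\eps$. Interpolating the energy bound $L^\infty_tH^1\subset L^\infty_tL^6$ with the Strichartz bound in $L^2_tL^{\frac{6}{1-2\gamma}}$ gives this whenever $\frac6\beta\le\frac{6}{1-2\gamma}$, i.e.\ $\beta\ge1-2\gamma$. A $\gamma<\frac14$ with this property exists precisely when $\beta>\frac12$, which is where the hypothesis enters.

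The uniform Strichartz bound of your Step 2 must likewise be closed with the three-dimensional Hartree estimates of Section~\ref{Sec:lowregsolutions}, which again use $V_\beta\in W^{\beta,1}$ and $\beta>1-2\gamma$, not with the polynomial estimate. Your remaining caveats (uniformity in $\eps$ of the Strichartz estimates, convergence in $L^2$ only rather than in $H^1\cap L^2_\mu$) are left equally implicit in the paper's one-line ``following the previous proof''. They are not where your proposal falls short.
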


\appendix

\section{Function spaces} \label{Sec:FunctionalSpaces}

Since the law of the white noise is invariant by translation, it does not decay at infinity. To deal with this, we work in weighted Lebesgue and Besov spaces. For any $\mu\in\R$ and $p\in[1,\infty]$, we consider
\begin{equation*}
\|u\|_{L_\mu^p(\R^d)}=\Big(\int_{\R^d}\langle x\rangle^\mu|f(x)|^p\drm x\Big)^{\frac{1}{p}}
\end{equation*}
with $\langle x\rangle=\sqrt{1+|x|^2}$. An important tool is the Paley-Littlewood decomposition
\begin{equation*}
u=\sum_{n\ge0}\Delta_nu
\end{equation*}
where 
\begin{equation*}
\big(\Delta_nu\big)(x):=2^{d(n-1)}\int_{\R^d}\chi\big(2^{n-1}(x-y)\big)u(y)\drm y
\end{equation*} 
with $\chi\in\CS(\R^d)$ and $\supp\ \widehat\chi\subset\{\frac{1}{2}\le|z|\le 2\}$ for $n\ge1$ and
\begin{equation*}
\big(\Delta_0u\big)(x):=\int_{\R^d}\chi_0(x-y)u(y)\drm y
\end{equation*}
with $\chi_0\in\CS(\R^d)$ and $\supp\ \widehat\chi_0\subset\{|z|\le 1\}$. Most of the following definitions and properties can be found in \cite[Section 4]{edmunds1996}. See also \cite{DW} and references therein, in particular the book \cite{BCD}.

\medskip

\begin{definition}
Let $\mu,\alpha\in\R$ and $p,q\in[1,\infty)$. The weighted Besov space $\CB_{p,q,\mu}^\alpha$ is the set of distribution $u\in\CS'(\R^d)$ such that
\begin{equation*}
\|u\|_{\CB_{p,q,\mu}^\alpha}:=\Big(\sum_{n\ge0}2^{\alpha n q}\|\Delta_nu\|_{L_\mu^p(\R^d)}^q\Big)^{\frac{1}{q}}<\infty.
\end{equation*}
\end{definition}

\medskip

For $p=q=2$, one recovers the usual weighted Sobolev spaces $\CH_\mu^\alpha=\CB_{2,2,\mu}^\alpha$ with
\begin{equation*}
\|u\|_{\CH_\mu^\alpha(\R^d)}=\|(\SF^{-1}\langle\cdot\rangle^\alpha\SF)u\|_{L_\mu^2(\R^d)}.
\end{equation*}
We also denote the case $p=q=\infty$ as $\CC_\mu^\alpha=\CB_{\infty,\infty,\mu}^\alpha$ which corresponds to the usual weighted Hölder spaces for $\alpha\in\R_+\backslash\N$. Moreover, there exist constants $C_1,C_2>0$ depending on the spaces parameters such that
\begin{equation*}
C_1\|\langle\cdot\rangle^\mu u\|_{\CB_{p,q,0}^\alpha}\le\|u\|_{\CB_{p,q,\mu}^\alpha}\le C_2\|\langle\cdot\rangle^\mu u\|_{\CB_{p,q,0}^\alpha},
\end{equation*}
and $\CB_{p,q,0}^\alpha$ corresponds to the usual Besov spaces hence weighted Besov spaces satisfy the following embeddings.

\medskip

\begin{lemma} \label{besov_embeddings}% \textbf{(Besov embeddings).} \label{besov_embeddings}\\
Let $p_1,p_2,q_1,q_2\in[1,\infty]$ and $\mu,\mu'\in\R$ such that $p_1\le p_2,q_1\le q_2$ and $\mu_1\ge\mu_2$. For all $\alpha\in \R$, one has the Besov embeddings
\begin{equation*}
\mathcal{B}^\alpha_{p_1,q_1,\mu_1}(\R^d) \subset  \mathcal{B}^{\alpha-d\left(\frac{1}{p_1}-\frac{1}{p_2} \right)}_{p_2,q_2,\mu_2}(\R^d).
\end{equation*}
as well as the Sobolev embeddings
\begin{equation*}
\forall\alpha\ge \frac{d}{2}-\frac{d}{p},\quad \CH^\alpha_{\mu}(\R^d) \subset  L^p_{\mu}(\R^d)
\end{equation*}
for $p\in[2,\infty]$. Finally, the embedding
\begin{equation*}
\CH_{\mu_1}^{\alpha_1}(\R^d)\hookrightarrow\CH_{\mu_2}^{\alpha_2}(\R^d)
\end{equation*}
is compact for $\alpha_1>\alpha_2$ and $\mu_1>\mu_2$.
\end{lemma}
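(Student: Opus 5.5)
The plan is to reduce all three statements of Lemma~\ref{besov_embeddings} to the unweighted case through the norm equivalence $\|u\|_{\CB_{p,q,\mu}^\alpha}\simeq\|\langle\cdot\rangle^\mu u\|_{\CB_{p,q,0}^\alpha}$ stated just before the lemma. Only the last point, compactness, needs more than this.

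For the Besov embedding, set $w=\langle\cdot\rangle^{\mu_1}u\in\CB^\alpha_{p_1,q_1,0}$. First apply the classical unweighted embedding $\CB^\alpha_{p_1,q_1}\subset\CB^{\alpha-d(1/p_1-1/p_2)}_{p_2,q_2}$. Blockwise it is Bernstein's inequality
\begin{equation*}
\|\Delta_n w\|_{L^{p_2}}\lesssim 2^{nd(\frac{1}{p_1}-\frac{1}{p_2})}\|\Delta_n w\|_{L^{p_1}},
\end{equation*}
combined with $\ell^{q_1}\subset\ell^{q_2}$. This gives $\langle\cdot\rangle^{\mu_1}u\in\CB^{\alpha'}_{p_2,q_2,0}$ with $\alpha'=\alpha-d(1/p_1-1/p_2)$. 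Next we lower the weight by multiplying by $\langle\cdot\rangle^{\mu_2-\mu_1}$, a smooth symbol of nonpositive order all of whose derivatives are bounded. Such a function is a multiplier on every $\CB^{\alpha'}_{p,q,0}$ (paraproduct decomposition, or Lemma~\ref{product_estimate_Besov} with a $\CC^{N}$ function for $N>|\alpha'|$). Hence $\langle\cdot\rangle^{\mu_2}u\in\CB^{\alpha'}_{p_2,q_2,0}$, which is the claim.

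The Sobolev embedding follows the same route. We use $\CH^\alpha=\CB^\alpha_{2,2}\subset\CB^{0}_{p,2}\subset L^p$ for $2\le p<\infty$, since $\CB^0_{p,2}\subset F^0_{p,2}=L^p$ by Minkowski for $p\ge2$. For $p=\infty$ with $\alpha>d/2$ we use instead $\CB^{\alpha-d/2}_{\infty,2}\subset L^\infty$. We apply this to $\langle\cdot\rangle^{\mu}u$ and read off the weighted statement. The case $p=\infty$ with $\alpha=d/2$ is not true as stated, so I would add the restriction $\alpha>d/2$ there.

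For compactness, let $(u_k)$ be bounded in $\CH^{\alpha_1}_{\mu_1}$. First cut off: for $R>0$ take $\chi_R$ smooth, equal to $1$ on $B(0,R)$. The weight gap gives
\begin{equation*}
\|(1-\chi_R)u\|_{\CH^{\alpha_2}_{\mu_2}}\lesssim\|(1-\chi_R)\langle\cdot\rangle^{\mu_2-\mu_1}\langle\cdot\rangle^{\mu_1}u\|_{\CH^{\alpha_2}}\lesssim R^{\mu_2-\mu_1}\|u\|_{\CH^{\alpha_1}_{\mu_1}},
\end{equation*}
because $(1-\chi_R)\langle\cdot\rangle^{\mu_2-\mu_1}$ has $\CC^N$ norm $O(R^{\mu_2-\mu_1})$. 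Then for each fixed $R$, the sequence $\chi_R u_k$ is bounded in $H^{\alpha_1}$ with support in $B(0,2R)$. Rellich's theorem, in its Fourier-side form (high frequencies $\ge 2^n$ cost $2^{-n(\alpha_1-\alpha_2)}$, low frequencies are equicontinuous and localized), makes it precompact in $H^{\alpha_2}$. A diagonal extraction over $R\in\N$ together with the uniform tail bound then yields a Cauchy subsequence in $\CH^{\alpha_2}_{\mu_2}$.

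The main obstacle is the multiplier step: we need $\langle\cdot\rangle^{s}$ with $s\le0$, and its truncations, to act boundedly on $\CB^\alpha_{p,q}$ for negative $\alpha$ with the stated decay in $R$. I would handle it by duality from the positive-regularity case, or by splitting into paraproducts, where the resonant and low–high parts are controlled because the multiplier is $\CC^N$ for large $N$.
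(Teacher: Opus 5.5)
Your proposal is correct and follows the paper's route. The paper gives no argument beyond the norm equivalence $\|u\|_{\CB^\alpha_{p,q,\mu}}\simeq\|\langle\cdot\rangle^\mu u\|_{\CB^\alpha_{p,q,0}}$ and a pointer to Section~4 of Edmunds--Triebel, so your reduction to Bernstein's inequality, $\ell^{q_1}\subset\ell^{q_2}$ and $\CB^0_{p,2}\subset L^p$ is exactly what is intended. Your cutoff-plus-Rellich argument with the $R^{\mu_2-\mu_1}$ tail bound correctly supplies the compactness, which does not follow from the unweighted embeddings alone.

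You are also right that the case $p=\infty$, $\alpha=\frac{d}{2}$ must be excluded, since $H^{\frac{d}{2}}(\R^d)\not\subset L^\infty(\R^d)$. The multiplier step you flag as the main obstacle is harmless: $\CC^N$ functions with $N>|\alpha|$ are multipliers on $\CB^\alpha_{p,q}$, with a bound linear in their $\CC^N$ norm. For the first embedding you can even skip it, because the paper's definition applies the weight after $\Delta_n$, so $\|\Delta_n u\|_{L^p_{\mu_2}}\le\|\Delta_n u\|_{L^p_{\mu_1}}$ holds blockwise when $\mu_2\le\mu_1$.
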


\medskip

The following lemma is a useful interpolation estimate that we shall use in the following, and which can be found in \cite[Theorem 3.8]{sickel2014}.

\medskip

\begin{lemma} \label{interpolation_estimate_Besov}
Let $p_0,p_1,q_0,q_1 \in[1,\infty]$ and $\alpha_0,\alpha_1,\mu_0,\mu_1 \in \R$ and $p,q,\alpha,\mu$ such that
\begin{equation*} 
\frac{1}{p}=\frac{1-\theta}{p_0}+\frac{\theta}{p_1},\quad \frac{1}{q}=\frac{1-\theta}{q_0}+\frac{\theta}{q_1},
\end{equation*}
and
\begin{equation*}
\alpha=(1-\theta)\alpha_0+\theta\alpha_1, \quad  \mu=(1-\theta)\mu_0 + \theta \mu_1
\end{equation*}
for $\theta \in[0,1]$. Then there exists $C>0$ such that
\begin{equation*} 
\|u\|_{\mathcal{B}^\alpha_{p,q,\mu}(\R^d)} \leq C \|u\|_{\mathcal{B}^{\alpha_0}_{p_0,q_0,\mu_0}(\R^d)}^{1-\theta} \|u\|_{\mathcal{B}^{\alpha_1}_{p_1,q_1,\mu_1}(\R^d)}^{\theta}.
\end{equation*}
In particular, we have
\begin{equation*}
\|u\|_{\CH_\mu^\alpha}\le C\|u\|_{\CH_{\mu_0}^{\alpha_0}}^{1-\theta}\|u\|_{\CH_{\mu_1}^{\alpha_1}}^\theta.
\end{equation*}
\end{lemma}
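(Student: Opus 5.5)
The statement just before this point is the interpolation Lemma \ref{interpolation_estimate_Besov} for weighted Besov spaces. The plan is to reduce it to a Hölder inequality applied twice: first at the level of the Littlewood–Paley blocks in space, then on the sequence of blocks. The key device is the equivalence $\|u\|_{\CB_{p,q,\mu}^\alpha}\simeq\|\langle\cdot\rangle^\mu u\|_{\CB_{p,q,0}^\alpha}$. It does not transfer cleanly, however, because the multiplier $\langle\cdot\rangle^{\mu}$ depends on $\theta$. So I would work directly with the definition using $\|\Delta_n u\|_{L^p_\mu}$, read as the $L^p$ norm of $\langle x\rangle^{\mu}\Delta_n u$. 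If the convention is the one in the displayed formula, with weight $\langle x\rangle^\mu$ inside the $p$-th power, the same argument works after replacing $\mu$ by $\mu/p$ throughout.

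\textbf{Step 1: pointwise Hölder on each block.} Fix $n\ge0$. Write
\begin{equation*}
\langle x\rangle^{\mu}|\Delta_nu(x)|=\big(\langle x\rangle^{\mu_0}|\Delta_nu(x)|\big)^{1-\theta}\big(\langle x\rangle^{\mu_1}|\Delta_nu(x)|\big)^{\theta},
\end{equation*}
which holds because $\mu=(1-\theta)\mu_0+\theta\mu_1$. Hölder's inequality in $x$ with exponents $\frac{p_0}{(1-\theta)p}$ and $\frac{p_1}{\theta p}$ is admissible since $\frac1p=\frac{1-\theta}{p_0}+\frac{\theta}{p_1}$. It gives
\begin{equation*}
\|\Delta_nu\|_{L^p_\mu}\le\|\Delta_nu\|_{L^{p_0}_{\mu_0}}^{1-\theta}\|\Delta_nu\|_{L^{p_1}_{\mu_1}}^{\theta}.
\end{equation*}
The endpoint cases $p_i=\infty$ and $\theta\in\{0,1\}$ are handled by the usual conventions.

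\textbf{Step 2: Hölder on the sequence.} Multiply the block estimate by $2^{\alpha n}=(2^{\alpha_0 n})^{1-\theta}(2^{\alpha_1 n})^{\theta}$. Setting $a_n=2^{\alpha_0n}\|\Delta_nu\|_{L^{p_0}_{\mu_0}}$ and $b_n=2^{\alpha_1n}\|\Delta_nu\|_{L^{p_1}_{\mu_1}}$, this yields
\begin{equation*}
2^{\alpha n}\|\Delta_nu\|_{L^p_\mu}\le a_n^{1-\theta}b_n^\theta.
\end{equation*}
Hölder's inequality in $\ell^q$ with exponents $\frac{q_0}{(1-\theta)q}$ and $\frac{q_1}{\theta q}$, admissible by the relation on $\frac1q$, then gives
\begin{equation*}
\|(a_n^{1-\theta}b_n^\theta)_n\|_{\ell^q}\le\|a\|_{\ell^{q_0}}^{1-\theta}\|b\|_{\ell^{q_1}}^\theta,
\end{equation*}
which is the claim with $C=1$. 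The Sobolev case follows by taking $p_i=q_i=2$ and using $\CH^\alpha_\mu=\CB^\alpha_{2,2,\mu}$, with equivalent norms.

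\textbf{Main obstacle.} The only delicate point is consistency of the weight convention. If one uses instead the definition $\|\langle\cdot\rangle^\mu u\|_{\CB^\alpha_{p,q,0}}$, which is equivalent up to constants, the blocks $\Delta_n(\langle\cdot\rangle^\mu u)$ no longer factor as above. In that case I would simply invoke the constant-equivalence quoted before the lemma to pass to the block-weighted norm, run Steps 1–2, and pass back. This costs only a constant $C$ depending on the parameters, which is exactly the form of the statement.
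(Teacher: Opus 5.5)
Your argument is correct, but it takes a different route from the paper. The paper gives no proof of its own. It quotes \cite[Theorem 3.8]{sickel2014}, which identifies the complex interpolation space between two weighted Besov spaces. The inequality then follows because the complex method is exact of exponent $\theta$: $\|u\|_{[X_0,X_1]_\theta}\le\|u\|_{X_0}^{1-\theta}\|u\|_{X_1}^{\theta}$.

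Your double H\"older argument works pointwise in $x$ on each fixed block $\Delta_nu$, using $\langle x\rangle^{\mu}=\langle x\rangle^{(1-\theta)\mu_0}\langle x\rangle^{\theta\mu_1}$, and then in $\ell^q$ over $n$. It gives only the one-sided norm inequality, not the identification of the interpolation space. That is exactly what the lemma states and all the paper uses, since the lemma only enters the energy estimates. In return your proof is elementary and self-contained. It gives $C=1$ for the block-defined norm; constants enter only through the equivalences with $\CH^\alpha_\mu$ or with $\|\langle\cdot\rangle^\mu u\|_{\CB^\alpha_{p,q,0}}$. It also treats the endpoint exponents $p_i,q_i=\infty$ uniformly, without checking the weight and endpoint hypotheses of the cited interpolation theorem.

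One correction to your aside about the literal displayed convention $\|f\|_{L^p_\mu}^p=\int\langle x\rangle^{\mu}|f|^p$. Replacing $\mu$ by $\mu/p$ does not rescue the argument there. The pointwise factorization would need $\frac{\mu}{p}=(1-\theta)\frac{\mu_0}{p_0}+\theta\frac{\mu_1}{p_1}$, which does not follow from $\mu=(1-\theta)\mu_0+\theta\mu_1$ unless $p_0=p_1$.

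In fact the lemma is false under that convention. Take a fixed frequency-localized bump translated to distance $R$. The left-hand side is $\gtrsim R^{\mu/p}$, while the right-hand side is $\lesssim R^{(1-\theta)\mu_0/p_0+\theta\mu_1/p_1}$. With $\mu_0=0$, $\mu_1=2\mu>0$, $\theta=\frac12$, $p_0=1$, $p_1=4$ (so $p=\frac85$), the two exponents are $\frac{5\mu}{8}$ and $\frac{\mu}{4}$, and the first is larger.

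So the lemma must be read with the convention of your main argument, $\|f\|_{L^p_\mu}=\|\langle\cdot\rangle^{\mu}f\|_{L^p}$. This is also the convention consistent with the paper's equivalence $\|u\|_{\CB^\alpha_{p,q,\mu}}\simeq\|\langle\cdot\rangle^\mu u\|_{\CB^\alpha_{p,q,0}}$ and with its use of $\|u\|_{L^2_\eta}^2=\int\langle x\rangle^{2\eta}|u|^2$.
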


\medskip

In general, one can only multiply a distribution by a test function. The following lemma gives a product rule in weighted Besov space, as a generalisation of Young condition, see~\cite[Section 2.8.2]{triebel1983}.

\medskip

\begin{lemma}\label{product_estimate_Besov}
Let $\alpha_1,\alpha_2 \in \R$ such that $\alpha_1+\alpha_2>0$. Let $\mu_1,\mu_2 \in \R$ with $\mu=\mu_1+\mu_2$ and $p_1,p_2\in[1,\infty]$ with $\frac{1}{p}=\frac{1}{p_1}+\frac{1}{p_2}$. Then for any $\kappa >0$, there exists a constant $C>0$ such that
\begin{equation*} 
\|uv\|_{\mathcal{B}_{p,p,\mu}^{\alpha-\kappa}(\R^d)} \leq C \| u \|_{\mathcal{B}_{p_1,p_1,\mu_1}^{\alpha_1}(\R^d)} \| v \|_{\mathcal{B}_{p_2,p_2,\mu_2}^{\alpha_2}(\R^d)}
\end{equation*}
where $\alpha=\min(\alpha_1,\alpha_2)$. In the case where $p=p_1$, one can take $\kappa=0$. We also have
\begin{equation*} 
\|uv\|_{\CW^{\alpha,p}(\R^d)} \leq C \| u \|_{\CW^{\alpha,p_1}(\R^d)} \| v \|_{L^{p_2}(\R^d)}+ C \| u \|_{L^{p_1'}(\R^d)} \| v \|_{\CW^{\alpha,p_2'}(\R^d)}
\end{equation*}
for $\alpha\in[0,1]$ and $\frac{1}{p}=\frac{1}{p_1}+\frac{1}{p_2}=\frac{1}{p_1'}+\frac{1}{p_2'}$.
\end{lemma}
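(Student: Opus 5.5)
The plan is to reduce the weighted product estimate to the unweighted one via the equivalence of norms recalled just before the statement,
\begin{equation*}
C_1\|\langle\cdot\rangle^\mu u\|_{\CB_{p,q,0}^\alpha}\le\|u\|_{\CB_{p,q,\mu}^\alpha}\le C_2\|\langle\cdot\rangle^\mu u\|_{\CB_{p,q,0}^\alpha}.
\end{equation*}
Write $\langle x\rangle^{\mu}uv=(\langle x\rangle^{\mu_1}u)(\langle x\rangle^{\mu_2}v)$, using $\mu=\mu_1+\mu_2$. It then suffices to prove the unweighted estimate
\begin{equation*}
\|fg\|_{\CB^{\alpha-\kappa}_{p,p}}\lesssim\|f\|_{\CB^{\alpha_1}_{p_1,p_1}}\|g\|_{\CB^{\alpha_2}_{p_2,p_2}}
\end{equation*}
for $f=\langle\cdot\rangle^{\mu_1}u$ and $g=\langle\cdot\rangle^{\mu_2}v$. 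Applying the norm equivalence on both sides then gives the weighted statement.

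For the unweighted estimate, I would use Bony's paraproduct decomposition
\begin{equation*}
fg=\sum_{i<j-1}\Delta_if\Delta_jg+\sum_{j<i-1}\Delta_if\Delta_jg+\sum_{|i-j|\le1}\Delta_if\Delta_jg.
\end{equation*}
\textbf{Paraproducts.} The block $S_{j-1}f\,\Delta_jg$ is spectrally supported in an annulus of size $2^j$. By H\"older, its $L^p$ norm is bounded by $\|S_{j-1}f\|_{L^{p_1}}\|\Delta_jg\|_{L^{p_2}}$.
\begin{itemize}
\item If $\alpha_1>0$, the low-frequency factor is bounded uniformly, $\|S_{j-1}f\|_{L^{p_1}}\lesssim\|f\|_{\CB^{\alpha_1}_{p_1,p_1}}$, and the paraproduct lies in $\CB^{\alpha_2}_{p,p}$.
\item If $\alpha_1\le0$, then $\|S_{j-1}f\|_{L^{p_1}}\lesssim 2^{j(-\alpha_1+\kappa)}\|f\|$, and the paraproduct lies in $\CB^{\alpha_1+\alpha_2-\kappa}_{p,p}$.
\end{itemize}
In both cases it lies in $\CB^{\min(\alpha_1,\alpha_2)-\kappa}_{p,p}$, using $\alpha_1+\alpha_2>0$. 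The symmetric paraproduct is handled the same way.

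\textbf{Resonant term.} The blocks $\Delta_if\Delta_jg$ with $|i-j|\le1$ are only supported in balls of size $2^j$. One sums $2^{-j(\alpha_1+\alpha_2)}$-weighted terms, and the positivity $\alpha_1+\alpha_2>0$ makes this sum land in $\CB^{\alpha_1+\alpha_2}_{p,p}$, which embeds in $\CB^{\alpha}_{p,p}$.

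The $\kappa$ loss comes only from passing from $\ell^\infty$-type bounds on low-frequency sums to $\ell^p$ summation. When $p=p_1$, the factor $g$ is measured in $L^\infty$-type norms and the standard estimates hold without loss, as in \cite[Section 2.8.2]{triebel1983}.

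\textbf{Weights.} This is the step I expect to be the main obstacle. The multiplication maps $u\mapsto\langle\cdot\rangle^{\mu_i}u$ must be isomorphisms $\CB^{\alpha_i}_{p_i,p_i,\mu_i}\to\CB^{\alpha_i}_{p_i,p_i,0}$. This is exactly the norm equivalence quoted above, which holds because $\langle x\rangle^{\mu}$ is a smooth symbol of order $\mu$ whose derivatives decay. One must check that it holds for all $p_i\in[1,\infty]$, including $\infty$; I would cite \cite[Section 4]{edmunds1996}.

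\textbf{Fractional Leibniz estimate.} For the second, Kato--Ponce type estimate with $\alpha\in[0,1]$, I would again use Bony's decomposition. Put all derivatives on the high-frequency factor in each paraproduct: $T_gf$ is bounded in $\CW^{\alpha,p}$ by $\|f\|_{\CW^{\alpha,p_1}}\|g\|_{L^{p_2}}$ via H\"older and Littlewood--Paley square functions, and $T_fg$ is bounded symmetrically with the pair $(p_1',p_2')$. The resonant term is bounded by either product since $\alpha\ge0$. The endpoint cases $p_i=\infty$ are the only delicate point. There I would use the Besov definition $\CW^{\alpha,p}=\CB^\alpha_{p,p}$ adopted in the paper, so that square-function arguments are replaced by direct $\ell^p$ sums.
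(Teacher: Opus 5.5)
There are two genuine gaps. In both places the statement as written is false, so your argument cannot be completed without extra hypotheses.

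\textbf{The $\kappa=0$ clause.} Your reduction to the unweighted case is fine, reading $\|f\|_{L^p_\mu}=\|\langle\cdot\rangle^\mu f\|_{L^p}$, which is how the paper uses these norms in practice. Bony's decomposition does give the estimate with a $\kappa$ loss, but your account of where the loss arises is reversed. Put $c_j=2^{j\alpha_1}\|\Delta_ju\|_{L^{p_1}}\in\ell^{p_1}$ and $d_j=2^{j\alpha_2}\|\Delta_jv\|_{L^{p_2}}\in\ell^{p_2}$.

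If $\alpha_1<0$, then $2^{j\alpha_1}\|S_{j-1}u\|_{L^{p_1}}\le\sum_{i<j-1}2^{(j-i)\alpha_1}c_i$ is itself in $\ell^{p_1}$. H\"older in $j$ then puts $\P_uv$ in $\CB^{\alpha_1+\alpha_2}_{p,p}$ with no loss. If $\alpha_1>0$, you only have $\sup_j\|S_{j-1}u\|_{L^{p_1}}\lesssim\|u\|$, hence only $\P_uv\in\CB^{\alpha_2}_{p,p_2}$, which is strictly larger than $\CB^{\alpha_2}_{p,p}$ when $p<p_2$. It is the factor $2^{-j\kappa}$ that restores $\ell^p$-summability. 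So your first bullet is false as written, although the lossy estimate survives.

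When $p=p_1$, i.e.\ $p_2=\infty$, the sequence $(d_j)$ is merely bounded. For $\alpha_1>0$ you then get only $\P_uv\in\CB^{\alpha_2}_{p,\infty}$, which lies in $\CB^{\alpha}_{p,p}$ only if $\alpha_2>\alpha$, i.e.\ $\alpha_1<\alpha_2$. Without that hypothesis the clause fails. Take $\alpha_1=\alpha_2>0$, $u$ Schwartz with $\widehat u$ supported in a small ball, and $v=\sum_j2^{-3j\alpha_2}\cos(2^{3j}x_1)\in\CC^{\alpha_2}$. For every large $j$ some block $\Delta_k$ with $k$ near $3j$ has $\|\Delta_k(uv)\|_{L^p}\gtrsim2^{-3j\alpha_2}$, so $uv\notin\CB^{\alpha_2}_{p,p}$ for $p<\infty$.

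Your justification (``$g$ is measured in $L^\infty$-type norms, so no loss'') has it backwards: $\CC^{\alpha_2}=\CB^{\alpha_2}_{\infty,\infty}$ only controls $\sup_jd_j$, and that is precisely the obstruction. Under the extra hypothesis $\alpha_1<\alpha_2$, all three Bony pieces do land in $\CB^{\alpha_1}_{p,p}$. This is the classical multiplier regime ($\CC^\rho$ multiplies $\CB^s_{p,p}$ for $\rho>|s|$), and it is how the paper actually uses the clause, e.g.\ for $Y_\eps e^{-2X_\eps}$.

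\textbf{The fractional Leibniz estimate.} With the convention $\CW^{\alpha,p}=\CB^\alpha_{p,p}$, which the paper states and which you propose to fall back on, the inequality fails for every $\alpha\in(0,1]$ as soon as $p<p_1$. So no direct $\ell^p$ summation can prove it. In $\P_vu=\sum_jS_{j-1}v\,\Delta_ju$, the factor $v\in L^{p_2}$ only gives $\sup_j\|S_{j-1}v\|_{L^{p_2}}\lesssim\|v\|_{L^{p_2}}$. Hence $2^{j\alpha}\|S_{j-1}v\,\Delta_ju\|_{L^p}\lesssim\|v\|_{L^{p_2}}c_j$ with $(c_j)$ only in $\ell^{p_1}$, and there is no $\kappa$ to trade.

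Concretely, fix a Schwartz $v$ and let $u=\psi\sum_ja_j2^{-3j\alpha}e^{i2^{3j}x_1}$, where $\widehat v$ and $\widehat\psi$ are supported in a small ball, $\psi v\neq0$, and $(a_j)\in\ell^{p_1}\setminus\ell^{p}$. Both terms on the right-hand side are finite: $\|u\|_{\CB^\alpha_{p_1,p_1}}\lesssim\|a\|_{\ell^{p_1}}$, and the series converges absolutely in every $L^r$. Yet $\|uv\|_{\CB^\alpha_{p,p}}^p\gtrsim\sum_j|a_j|^p=\infty$.

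What your square-function argument actually proves (controlling $\sup_j|S_{j-1}v|$ by the maximal function, then applying Fefferman--Stein) is the Kato--Ponce inequality in the Bessel potential spaces $(1-\Delta)^{-\alpha/2}L^p$, $1<p<\infty$. These are the spaces $F^\alpha_{p,2}$, not $\CB^\alpha_{p,p}$. That is the reading under which the second estimate holds, and you cannot switch to the Besov scale at the endpoints.

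The paper gives no argument here, only a citation to Triebel, so these missing hypotheses are defects of the statement itself. Still, your proposal should not claim to establish it as written.
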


\medskip

As we will control energy associated to dispersive PDEs, the following duality result from in weighted Besov spaces from \cite[Theorem 2.11.2]{triebel1983} will be useful.

\medskip

\begin{lemma}\label{duality_estimate_Besov}
Let $\alpha,\mu \in \R$ and $p,q \in [1,\infty ]$. Then there exists a constant $C>0$ such that
\begin{equation*} 
\Big| \int_{\R^d} u(x)v(x)\dd x \Big| \leq C \|u\|_{\mathcal{B}_{p,q,\mu}^\alpha(\R^d)} \|v\|_{\mathcal{B}_{p',q',-\mu}^{-\alpha}(\R^d)},
\end{equation*}
where $\frac{1}{p}+\frac{1}{p'}=1$ and  $\frac{1}{q}+\frac{1}{q'}=1$ for any $u,v\in\CS(\IR^d)$.
\end{lemma}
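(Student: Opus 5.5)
The statement to prove is the duality estimate of Lemma \ref{duality_estimate_Besov}. I would prove it in the unweighted case first and then move the weight from one factor to the other.

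\textbf{Reduction to the unweighted case.} For $u,v\in\CS(\R^d)$ write $uv=(\langle x\rangle^\mu u)(\langle x\rangle^{-\mu}v)$. The equivalence $\|u\|_{\CB^\alpha_{p,q,\mu}}\simeq\|\langle\cdot\rangle^\mu u\|_{\CB^\alpha_{p,q,0}}$ recalled above turns the weighted claim into
\begin{equation*}
\Big|\int_{\R^d}fg\,\drm x\Big|\lesssim\|f\|_{\CB^\alpha_{p,q}}\|g\|_{\CB^{-\alpha}_{p',q'}}
\end{equation*}
with $f=\langle x\rangle^\mu u$ and $g=\langle x\rangle^{-\mu}v$. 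Both $f$ and $g$ are still Schwartz functions, since $\langle x\rangle^{\pm\mu}$ is smooth with polynomially bounded derivatives.

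\textbf{Unweighted estimate.} I decompose $f=\sum_n\Delta_nf$ and $g=\sum_m\Delta_mg$, which is legitimate for Schwartz functions. By Plancherel, $\int\Delta_nf\,\Delta_mg=0$ unless the Fourier supports of the two blocks meet after the reflection $\zeta\mapsto-\zeta$. The annuli are symmetric under this reflection, so this forces $|n-m|\le N_0$ for some fixed $N_0$ (for instance $N_0=1$ or $2$). Hence
\begin{equation*}
\int fg=\sum_{|k|\le N_0}\sum_n\int\Delta_nf\,\Delta_{n+k}g .
\end{equation*}
I bound each term by Hölder in space, $\|\Delta_nf\|_{L^p}\|\Delta_{n+k}g\|_{L^{p'}}$. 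I then write this as $2^{\alpha n}\|\Delta_nf\|_{L^p}\cdot2^{-\alpha(n+k)}\|\Delta_{n+k}g\|_{L^{p'}}\cdot2^{\alpha k}$. The factor $2^{\alpha k}$ is bounded because $|k|\le N_0$. Finally, discrete Hölder in $n$ with exponents $(q,q')$ gives $\|f\|_{\CB^\alpha_{p,q}}\|g\|_{\CB^{-\alpha}_{p',q'}}$, and summing over the finitely many $k$ only changes the constant.

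\textbf{Main obstacle.} The delicate point is the weight transfer. It relies on the norm equivalence above, which in turn rests on $\langle x\rangle^{\mu}$ being a pointwise multiplier between the relevant spaces for every real $\mu$ and all $p,q$, including the endpoints $p$ or $q\in\{1,\infty\}$. I would take this from \cite{edmunds1996} as stated earlier rather than reprove it.

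A second, minor point concerns the low-frequency block $\Delta_0$, whose Fourier support is a ball rather than an annulus. It still interacts only with $\Delta_0$ and $\Delta_1$ (and possibly $\Delta_2$), so it is absorbed in the same finite band $|k|\le N_0$.
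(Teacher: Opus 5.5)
Your proof is correct, but it takes a different route from the paper. The paper gives no argument and only cites the Besov duality theorem \cite[Theorem 2.11.2]{triebel1983}. That theorem identifies the whole dual $(\mathcal{B}^\alpha_{p,q})'=\mathcal{B}^{-\alpha}_{p',q'}$, which is far more than the lemma needs. It is also unweighted and relies on density of Schwartz functions, hence needs $p,q<\infty$. So the citation tacitly requires the weight equivalence as well, and does not literally cover every endpoint pair the statement allows (e.g.\ $p=\infty$, $q=1$). You instead prove only the easy half of the duality, directly, by Littlewood--Paley almost orthogonality and H\"older in $x$ and in $n$, and this works uniformly for all $p,q\in[1,\infty]$.

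You can also drop the step you call the main obstacle. The weighted norm is defined block by block, with the weight inside the $L^p$ norm of $\Delta_n u$, so you can run the orthogonality on $u$ and $v$ themselves: the vanishing of $\int\Delta_nu\,\Delta_mv\,\mathrm{d}x$ for $|n-m|>N_0$ concerns only the unweighted integral. For the remaining pairs, split $1=\langle x\rangle^{\mu}\langle x\rangle^{-\mu}$ inside H\"older to get $\bigl|\int\Delta_nu\,\Delta_mv\,\mathrm{d}x\bigr|\le\|\Delta_nu\|_{L^p_\mu}\|\Delta_mv\|_{L^{p'}_{-\mu}}$. Your discrete H\"older step then finishes the proof, with no multiplier theorem for $\langle x\rangle^{\pm\mu}$.

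This presumes the multiplicative convention $\|f\|_{L^p_\mu}=\|\langle\cdot\rangle^\mu f\|_{L^p}$. That is the convention under which the norm equivalence you invoke and the $\pm\mu$ pairing in the lemma are consistent. With the appendix's literal formula $\int\langle x\rangle^\mu|f|^p\,\mathrm{d}x$, the dual weight would have to be $-\mu p'/p$ instead.
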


\section{Stochastic bounds and renormalization}\label{Sec:stochastic_bounds}

The Gaussian spatial white noise $\xi$ belongs to $\CC_{-\delta}^{-\frac{d}{2}-\kappa}$ for any $\kappa,\delta>0$, that is spatial growth at infinity of order $\langle x\rangle^\delta$. We consider
\begin{equation}
X_1:=(1-\Delta)^{-1}\xi\in\CC_{-\delta}^{2-\frac{d}{2}-\kappa}
\end{equation}
such that $\nabla X_1$ is a distribution for $d\in\{2,3\}$. Following usual renormalization, its Wick square can be defined as follows given $X_{1,\eps}=(1-\Delta)^{-1}\xi_\eps$ where $(\xi_\eps)_{\eps>0}$ is a regularization of the noise, see for example \cite{HairerLabbe15}.

\medskip

\begin{proposition}
There exists a distribution $\Wick{|\nabla X_1|^2}\in\CC_{-\delta}^{2-d-\kappa}(\IR^d)$ such that
\begin{equation}
\Wick{|\nabla X_1|^2}=\lim_{\eps\to0}\big(|\nabla X_{1,\eps}|^2-c_\eps^1\big)
\end{equation}
in probability in the space $\CC_{-\delta}^{2-d-\kappa}(\IR^d)$ for any $\kappa,\delta>0$ with $c_\eps^1=\IE\big[|\nabla X_{1,\eps}|^2\big]$ a diverging constant.
\end{proposition}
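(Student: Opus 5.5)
The plan is the standard second-order Wiener chaos argument. We combine moment bounds on Littlewood--Paley blocks with Gaussian hypercontractivity and the Besov embedding $\CB_{p,p,-\delta}^{\alpha}\hookrightarrow\CC_{-\delta'}^{\alpha-d/p}$ of Lemma \ref{besov_embeddings}. Write $K=\nabla(1-\Delta)^{-1}$, whose Fourier multiplier is $\frac{ik}{1+|k|^2}$, and $\xi_\eps=\rho_\eps*\xi$ with a mollifier $\rho_\eps$. Then $Z_\eps:=|\nabla X_{1,\eps}|^2-c_\eps^1$ is a stationary element of the second chaos, since $c^1_\eps=\IE[|\nabla X_{1,\eps}(x)|^2]$ does not depend on $x$ by translation invariance. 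Concretely $Z_\eps=\int \Wick{\xi(y_1)\xi(y_2)}\,\sum_i K^i_\eps(x-y_1)K^i_\eps(x-y_2)\,\drm y_1\drm y_2$.

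\textbf{Step 1 (block moments).} For each $n\ge0$ and $x\in\IR^d$, compute $\IE[|\Delta_nZ_\eps(x)|^2]$ by Wick's theorem in Fourier variables:
\begin{equation}
\IE\big[|\Delta_nZ_\eps(x)|^2\big]\lesssim\int_{\IR^{2d}}\varphi_n(k_1+k_2)^2\,\frac{|\hat\rho(\eps k_1)|^2|\hat\rho(\eps k_2)|^2}{\langle k_1\rangle^2\langle k_2\rangle^2}\,\drm k_1\drm k_2 .
\end{equation}
Set $k=k_1+k_2$ with $|k|\sim2^n$ and use the convolution bound $\int\langle k_1\rangle^{-2}\langle k-k_1\rangle^{-2}\drm k_1\lesssim\langle k\rangle^{d-4}$ (with a logarithm when $d=4$, which is irrelevant here). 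For $d\in\{2,3\}$ this gives $\lesssim\langle k\rangle^{d-4+\kappa}$, hence
\begin{equation}
\IE\big[|\Delta_nZ_\eps(x)|^2\big]\lesssim 2^{n(d+d-4+\kappa)}=2^{2n(d-2+\kappa/2)},
\end{equation}
uniformly in $\eps$ and $x$. This is the expected regularity $2-d$. This integral is the only place where renormalization matters: subtracting $c^1_\eps$ removes exactly the diagonal contraction $\int\langle k_1\rangle^{-2}\drm k_1$, which diverges logarithmically for $d=2$ and linearly for $d=3$.

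\textbf{Step 2 (Cauchy in $\eps$).} Repeat Step 1 for $Z_\eps-Z_{\eps'}$. The factor $|\hat\rho(\eps k_1)\hat\rho(\eps k_2)-\hat\rho(\eps' k_1)\hat\rho(\eps' k_2)|^2$ is bounded by $C(\eps^{\theta}\vee\eps'^{\theta})|k_i|^{\theta}$ for small $\theta>0$. This gives the same bound with an extra $2^{n\theta}\eps^{\theta}$. Hypercontractivity on the second chaos then upgrades these bounds to $\IE[|\Delta_n(Z_\eps-Z_{\eps'})(x)|^{p}]\lesssim \eps^{p\theta/2}2^{np(d-2+\kappa)}$ for all $p<\infty$.

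\textbf{Step 3 (weighted norms).} Integrate against $\langle x\rangle^{-\delta p}$; for $p>d/\delta$ this weight is integrable. We obtain
\begin{equation}
\IE\|Z_\eps-Z_{\eps'}\|^p_{\CB^{2-d-2\kappa}_{p,p,-\delta}}\lesssim\eps^{p\theta/2}.
\end{equation}
Choosing $p$ large and applying Lemma \ref{besov_embeddings} transfers this to $\CC^{2-d-3\kappa}_{-2\delta}$. So $(Z_\eps)$ is Cauchy in $L^p(\Omega,\CC^{2-d-\kappa}_{-\delta})$ after renaming $\kappa,\delta$, which proves convergence in probability. The limit is independent of the mollifier by the same difference estimate.

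The main obstacle is the uniform convolution estimate in Step 1 for $d=3$, where $\langle k\rangle^{-1}$ decay must be checked carefully near the resonant region $k_1\approx -k_2$ with $|k|\ll|k_1|$. There I would split the domain into $|k_1|\le 2|k|$ and $|k_1|>2|k|$. The latter region contributes $\int_{|k_1|>2|k|}|k_1|^{-4}\drm k_1\sim|k|^{d-4}$, which is integrable exactly because $d<4$.
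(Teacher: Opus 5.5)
Your proposal is correct and is the standard second-chaos argument (block second moments, Gaussian hypercontractivity, weighted Besov embedding) that the paper does not write out, citing instead the usual renormalization as in \cite{HairerLabbe15}. One point in Step 2 should be stated more precisely: the mollifier difference produces a factor $|k_1|^\theta$ or $|k_2|^\theta$ rather than $2^{n\theta}$. This is harmless, since $\int\langle k_1\rangle^{-2+\theta}\langle k-k_1\rangle^{-2}\drm k_1\lesssim\langle k\rangle^{d-4+\theta+\kappa}$ still holds for $\theta<4-d$.
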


\medskip

As introduced by Gubinelli and Hofmanovà \cite{GubinelliHofmanova19}, one can consider the decomposition
\begin{equation}
f=\SU_{\le}f+\SU_>f
\end{equation}
for any distribution $f$ where $\SU_{\le}f$ and $\SU_>f$ are projected respectively on low or high frequency according to an inhomogeneous spatial decomposition. Precisely, we have
\begin{equation}
\SU_{\le}f=\sum_{k\ge0}w_k\Delta_{\le L_k}f\quad\text{and}\quad\SU_>f=\sum_{k\ge0}w_k\Delta_{>L_k}f
\end{equation}
given $(w_k)_{k\ge0}$ a smooth partition of the unity, $(L_k)_{k\ge0}\subset[-1,+\infty)$ a suitable sequence with $\Delta_{\le L}$ and $\Delta_{>L}$ the sum of the Paley-Littlewood projectors $\Delta_j$ respectively for $j\le L$ and $j>L$. The main properties for the projection used here are recalled in the following result, that is Lemma $2.4$ from \cite{GubinelliHofmanova19}. To obtain the following lemma, consider with their notation
\begin{equation}
\alpha=-\alpha,\quad\gamma=\delta_1,\quad\delta=\delta_2,\quad a=\frac{\mu}{m}-\delta,\quad b=-\frac{\mu}{m}-\gamma
\end{equation}
with $\rho(x)=\langle x\rangle^{\mu}$.

\medskip

\begin{lemma}\label{Lem:LocalizationProjectors}
For any $\alpha,\mu\in\IR$ and $m,\delta_1,\delta_2>0$, we have
\begin{equation}
\|\SU_{\le} f\|_{\CC_{-\mu-m\delta_1}^{\alpha+\delta_1}}\lesssim\|f\|_{\CC_{-\mu}^\alpha}\quad\text{and}\quad\|\SU_>f\|_{\CC_{-\mu+m\delta_2}^{\alpha-\delta_2}}\lesssim\|f\|_{\CC_{-\mu}^\alpha}
\end{equation}
where the constant depends on the parameters $\alpha,\mu,\delta_1,\delta_2$.
\end{lemma}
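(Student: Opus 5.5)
The quickest route is to specialize Lemma 2.4 of \cite{GubinelliHofmanova19} with the dictionary indicated before the statement, namely weight $\rho(x)=\langle x\rangle^{\mu}$, $a=\frac{\mu}{m}-\delta$ and $b=-\frac{\mu}{m}-\gamma$. I would nevertheless write out the direct argument, since it shows how the sequence $(L_k)$ has to be chosen. Take $(w_k)_{k\ge0}$ a smooth dyadic partition of unity, with $w_0$ supported in a ball and $w_k$ supported on the annulus $\{|x|\sim2^k\}$, obtained by dilating a fixed profile. Each point then lies in the support of at most three $w_k$, and $\|\partial^\beta w_k\|_{L^\infty}\lesssim 2^{-k|\beta|}\le1$ uniformly in $k$. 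Choose $L_k:=mk$, so that $2^{L_k}\sim\langle x\rangle^{m}$ on $\supp w_k$. The whole point is that the frequency cut-off is now tied to the position: low frequencies are kept on a region where the weight is large, high frequencies on a region where it is small.

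\textbf{Single-block estimates.} Let $\|f\|_{\CC_{-\mu}^\alpha}\le1$, so that $\|\Delta_jf\|_{L_{-\mu}^\infty}\lesssim2^{-j\alpha}$ for every $j$.
\begin{itemize}
\item For the low part, each block of $\Delta_{\le L_k}f$ with $j\le L_k$ satisfies $2^{j(\alpha+\delta_1)}\|\Delta_jf\|_{L^\infty_{-\mu}}\lesssim2^{j\delta_1}\le2^{L_k\delta_1}$. Since $\Delta_{\le L}$ is uniformly bounded, this gives $\|\Delta_{\le L_k}f\|_{\CC^{\alpha+\delta_1}_{-\mu}}\lesssim2^{mk\delta_1}$.
\item For the high part, each block with $j>L_k$ satisfies $2^{j(\alpha-\delta_2)}\|\Delta_jf\|_{L^\infty_{-\mu}}\lesssim2^{-j\delta_2}\le2^{-L_k\delta_2}$. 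This gives $\|\Delta_{>L_k}f\|_{\CC^{\alpha-\delta_2}_{-\mu}}\lesssim2^{-mk\delta_2}$.
\end{itemize}

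\textbf{Localizing with $w_k$ and summing.} Next I would show two things, for any $\beta$ and any weight exponent $\nu$:
\begin{equation}
\|w_kg\|_{\CC^\beta_{\nu}}\lesssim2^{k(\nu-\nu')}\|g\|_{\CC^{\beta}_{\nu'}}\qquad\text{and}\qquad\Big\|\sum_kw_kg_k\Big\|_{\CC^\beta_\nu}\lesssim\sup_k\|w_kg_k\|_{\CC^\beta_\nu}.
\end{equation}
Both would come from three facts:
\begin{itemize}
\item multiplication by $w_k$ is bounded on $\CC^\beta$ uniformly in $k$, via a paraproduct decomposition using that $w_k$ is uniformly bounded in $\CC^N$ for large $N$;
\item the Littlewood–Paley kernels have Schwartz decay, which absorbs the polynomial factor $\langle x-y\rangle^{|\nu|}$, so $\langle x\rangle^\nu$ can be moved through $\Delta_j$;
\item the supports of the $w_k$ overlap only finitely, up to those same Schwartz tails.
\end{itemize}
Applying these with $\nu=-\mu-m\delta_1$, $\nu'=-\mu$ and $g_k=\Delta_{\le L_k}f$, the factor $2^{-km\delta_1}$ exactly cancels $2^{mk\delta_1}$, which gives the first bound. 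Similarly, with $\nu=-\mu+m\delta_2$ and $g_k=\Delta_{>L_k}f$, the factor $2^{km\delta_2}$ is compensated by $2^{-mk\delta_2}$, which gives the second.

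\textbf{Main obstacle.} The delicate step is the localization: the claim that $\Delta_j(w_kg)$ essentially lives where $w_k$ does, so that the weight can be evaluated as $2^{k\nu}$. Frequency projections are non-local, so the contribution of $\Delta_j(w_kg)$ far from $\supp w_k$ has to be controlled. I would first try to bound it by $\int|K_j(x-y)|\,w_k(y)\,|g(y)|\,\drm y$ and use $\langle x\rangle^{\nu}\lesssim\langle y\rangle^{\nu}\langle x-y\rangle^{|\nu|}$ together with $\sup_j\|\langle\cdot\rangle^{N}K_j\|_{L^1}<\infty$. For negative $\beta$ the product estimate must also be handled: multiplication by the smooth $w_k$ is fine through the paraproduct bounds of Lemma~\ref{product_estimate_Besov}, since the regularity of $w_k$ is arbitrarily large. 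The constants then depend only on $\alpha,\mu,\delta_1,\delta_2$ (and on $m$), as claimed.
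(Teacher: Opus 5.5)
Your proposal is correct and follows the paper's route: the paper proves the lemma only by specializing Lemma 2.4 of \cite{GubinelliHofmanova19} with the indicated dictionary, and your direct argument with $L_k=mk$ is a faithful unpacking of that cited result. One point to tidy: for $\alpha<0$ the pieces $\Delta_{>L_k}f$ are only distributions, so do the weight shift via $\|u\|_{\CC^\beta_\nu}\simeq\|\langle\cdot\rangle^\nu u\|_{\CC^\beta}$ and the uniformly $C^N$-bounded multipliers $2^{-k(\nu-\nu')}\langle\cdot\rangle^{\nu-\nu'}w_k$, and handle the far-field terms in the summation by duality against a cut-off kernel $\tilde\chi_kK_j(x-\cdot)$ rather than by the pointwise bound $\int|K_j(x-y)|w_k(y)|g(y)|\,\drm y$.
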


\medskip

This lemma will be used with $m=\frac{2\mu}{\delta_2}$ for $\mu,\delta_1,\delta_2>0$, that is
\begin{equation}
\|\SU_{\le} f\|_{\CC_{-\mu(1+2\frac{\delta_1}{\delta_2})}^{\alpha+\delta_1}}\lesssim\|f\|_{\CC_{-\mu}^\alpha}\quad\text{and}\quad\|\SU_>f\|_{\CC_{\mu}^{\alpha-\delta_2}}\lesssim\|f\|_{\CC_{-\mu}^\alpha}
\end{equation}
where all the spatial weight are abitrary small for $\mu>0$ small and fixed $\delta_1,\delta_2>0$. In this work, we use the localization operators $\SU_{\le}$ and $\SU_{>}$ to deal with the singularity of the Wick product and the growth at infinity in two different terms. Taking $\delta_1=\eta>0$ and $\delta_2=\kappa$, we have
\begin{equation}
X_1=X_1^\le+X_1^>
\end{equation}
with $X_1^\le=\SU_{\le}X_1\in\CC_{-\mu(1+\frac{\eta}{\kappa})}^{2-\frac{d}{2}-\kappa+\eta}(\IR^d)$ and $X_1^>=\SU_> X_1\in\CC_{\mu}^{2-\frac{d}{2}-2\kappa}(\IR^d)$. Since $\SU_>X_1$ has the same regularity as $X_1$, the square of its gradient has to be renormalizard. We have
\begin{equation}
\Wick{|\nabla X_1|^2}=|\nabla X_1^\le|^2+2\nabla X_1^\le\cdot\nabla X_1^>+\Wick{|\nabla X_1^>|^2}
\end{equation}
where $|\nabla X_1^\le|^2$ is well-defined for $1-\frac{d}{2}-\kappa+\eta>0$ and the cross term is well-defined if
\begin{equation}
2-d-3\kappa+\eta>0
\end{equation}
which imposes a condition on $\eta$. Taking $\eta=d-2+4\kappa$ gives 
\begin{equation}
X_1^\le\in\CC_{-\mu(1+\frac{d-2+4\kappa}{\kappa})}^{\frac{d}{2}+3\kappa}(\IR^d)\quad\text{and}\quad X_1^>\in\CC_{\mu}^{2-\frac{d}{2}-2\kappa}(\IR^d)
\end{equation}
hence all the terms in the previous expression are well-defined while the negative weight can be arbitrary small taking $\mu>0$ small such as $\mu=\kappa^2$. While $X_1^>$ encodes the same singularity as $X_1$, it has decay at infinity hence
\begin{equation}
e^{cX_1^>}\in L^\infty(\IR^d)
\end{equation}
for any $c\in\IR$ and the norm
\begin{equation}
\|ue^{X_1^>}\|_{L^2(\IR^d)}^2=\int_{\IR^d}|u(x)|^2e^{2X_1^>(x)}\drm x
\end{equation}
is equivalent to the unweighted $L^2(\IR^d)$ norm. In three dimensions, one needs to renormalize more terms for $\CH$ to make sense. We consider
\begin{align}
X_1&:=(1-\Delta)^{-1}\xi\in\CC_{-\delta}^{\frac{1}{2}-\kappa}(\IR^3),\\
X_2&:=(1-\Delta)^{-1}(\Wick{|\nabla X_1|^2})\in\CC_{-\delta}^{1-\kappa}(\IR^3),\\
X_3&:=2(1-\Delta)^{-1}(\Wick{\nabla X_1\cdot\nabla X_2})\in\CC_{-\delta}^{\frac{3}{2}-\kappa}(\IR^3)
\end{align}
for $\kappa,\delta>0$. With the previous renormalization result, the field $X_2$ is defined as the limit of
\begin{equation}
X_{2,\eps}:=(1-\Delta)^{-1}\big(|\nabla X_{1,\eps}|^2-c_{1,\eps}\big)
\end{equation}
in $\CC_{-\delta}^{1-\kappa}(\IR^3)$ as $\eps$ goes to $0$. We have the following result that assures that $X_3$ also exists as a limit of the regularized product, it was proved on the torus $\IT^3$ in Section $9$ from \cite{GubinelliPerkowski2017} and can be adapted to weighted full space as for the previous result as for the other terms. One also has to renormalized $|\nabla X_2|^2$ for our purpose.

\medskip

\begin{proposition}
There exists distributions $\Wick{\nabla X_1\cdot\nabla X_2}\in\CC_{-\delta}^{-\frac{1}{2}-\kappa}(\IR^3)$ and $\Wick{|\nabla X_2|^2}\in\CC_{-\delta}^{-\kappa}(\IR^3)$ such that
\begin{equation}
\Wick{\nabla X_1\cdot\nabla X_2}=\lim_{\eps\to0}(\nabla X_{1,\eps}\cdot\nabla X_{2,\eps})
\end{equation}
in probability in the space $\CC_{-\delta}^{-\frac{1}{2}-\kappa}(\IR^3)$ and
\begin{equation}
\Wick{|\nabla X_2|^2}=\lim_{\eps\to0}(|\nabla X_{2,\eps}|^2-c_\eps^2)
\end{equation}
in probability in the space $\CC_{-\delta}^{-\kappa}(\IR^3)$ for any $\kappa,\delta>0$ with $c_\eps^2=\IE\big[|\nabla X_{2,\eps}|^2\big]$ a diverging quantity.
\end{proposition}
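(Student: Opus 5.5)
The plan is to use Wiener chaos estimates together with Kolmogorov/Besov-embedding arguments in weighted spaces, as for $\Wick{|\nabla X_1|^2}$ and following Section 9 of \cite{GubinelliPerkowski2017} on $\IT^3$. The torus arguments are translation-invariant pointwise moment bounds, so they transfer to $\IR^3$ once we handle weights.

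\emph{Step 1 (reduction to moment bounds).} For a random field $F$ living in a finite sum of Wiener chaoses, Gaussian hypercontractivity gives
\begin{equation}
\IE\big[|\Delta_jF(x)|^{2p}\big]\lesssim_p \IE\big[|\Delta_jF(x)|^2\big]^p .
\end{equation}
By stationarity, the right-hand side does not depend on $x$. Integrating against $\langle x\rangle^{-2p\delta}$, which is integrable for $2p\delta>3$, yields a bound on $\IE\|F\|_{\CB^{\alpha}_{2p,2p,-\delta}}^{2p}$. The Besov embedding of Lemma \ref{besov_embeddings} then gives $\CC^{\alpha-3/(2p)-}_{-\delta}$. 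Since $p$ is large, this costs only $\kappa$ of regularity and an arbitrarily small weight. Consequently it suffices to show
\begin{equation}
\IE|\Delta_jF(0)|^2\lesssim 2^{-2j\alpha},
\end{equation}
and, for convergence, the analogous bound for $F_\eps-F_{\eps'}$ with an extra factor $\eps^{\theta}$. Convergence in probability, and even in $L^p(\Omega)$, follows from Cauchy-ness.

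\emph{Step 2 (chaos decomposition and Fourier integrals).} Write $\hat\xi_\eps(k)=\hat\rho(\eps k)\hat\xi(k)$ and $\nabla X_{1}$ with symbol $ik\langle k\rangle^{-2}$.
\begin{itemize}
\item The product $\nabla X_{1,\eps}\cdot\nabla X_{2,\eps}$ is cubic in $\xi$. Decompose it into the third chaos plus a first-chaos term coming from one contraction. This term involves
\begin{equation}
\int \frac{k_1\cdot(k_1+k_2)}{\langle k_1\rangle^2\langle k_1+k_2\rangle^2}\,\frac{k_1\cdot k_2}{\langle k_1\rangle^{2}\langle k_2\rangle^{2}}\,dk_1 ,
\end{equation}
integrated in the contracted variable. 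By oddness in $k_1$, the would-be divergent constant part vanishes, which is why no constant subtraction appears. One still checks that the remaining first-chaos kernel is bounded by a logarithmically divergent quantity times $|k|$, hence gives regularity $-\frac12-\kappa$.
\item The third chaos is estimated by the standard convolution lemma,
\begin{equation}
\int \langle k-q\rangle^{-a}\langle q\rangle^{-b}\,dq\lesssim \langle k\rangle^{3-a-b}\quad\text{for } a+b>3,\ a,b<3 .
\end{equation}
This gives $\IE|\Delta_j\cdot|^2\lesssim 2^{j(1+\kappa)}$, i.e.\ regularity $-\frac12-\kappa$.
\item The term $|\nabla X_{2,\eps}|^2$ is quartic. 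Its chaos components are $4$, $2$ and $0$. The zeroth chaos is $c^2_\eps=\IE|\nabla X_{2,\eps}|^2$, which is divergent (logarithmically in $d=3$), and is removed. The second-chaos term has one contraction. Its kernel integral, using $\nabla X_2\sim\langle k\rangle^{-1}*$ of a $\langle\cdot\rangle^{-1}$ product, is estimated again by the convolution lemma, giving $\IE|\Delta_j|^2\lesssim 2^{j\kappa}$.
\end{itemize}

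\emph{Step 3 (convergence).} Bound differences by replacing one factor $\hat\rho(\eps k)$ with $\hat\rho(\eps k)-\hat\rho(\eps' k)$. For $\eps'<\eps$ this is $\lesssim (\eps|k|)^\theta$, which trades a small loss of regularity for a factor $\eps^\theta$.

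The main obstacle is the second-chaos (single-contraction) term of $|\nabla X_2|^2$ and the first-chaos term of $\nabla X_1\cdot\nabla X_2$. Here one must identify exactly which divergences are constants (absorbed by $c_\eps^2$) and which ones cancel by symmetry. There is also a subtle sub-divergence: the Wick renormalization inside $X_2$ must prevent a logarithmic non-constant divergence. I would first try to bound these kernels by iterated use of the convolution lemma, checking the borderline exponents $a+b=3$ where a $\kappa$-loss is needed.
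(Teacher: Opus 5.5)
Your proposal is correct and follows essentially the route the paper intends: the paper gives no detailed argument for this proposition and simply defers to the Wiener-chaos and hypercontractivity computations of Section 9 of \cite{GubinelliPerkowski2017} on $\IT^3$, transferred to weighted spaces on $\IR^3$, and your stationarity-plus-weighted-Besov reduction with Fourier kernel bounds carries out exactly that. Two small clarifications: the only potentially divergent piece of the first-chaos kernel of $\nabla X_{1,\eps}\cdot\nabla X_{2,\eps}$ is logarithmically divergent and linear in $k$ (not a constant), and once it vanishes by oddness the remaining kernel is bounded by $\langle k\rangle^{-1}$ uniformly in $\eps$; and the Wick subtraction inside $X_2$ plays no role here, because $\nabla$ annihilates the constant $c_\eps^1$.
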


\medskip

Formally, we have
\begin{equation}
|\nabla(X_1+X_2+X_3)|^2=|\nabla X_1|^2+2\nabla X_1\cdot\nabla(X_2+X_3)+|\nabla X_2|^2+2\nabla X_2\cdot\nabla X_3+|\nabla X_3|^2
\end{equation}
however this involves singular products in three dimensions, namely $|\nabla X_1|^2,|\nabla X_2|^2$ and $2\nabla X_1\cdot\nabla X_2$ since $X_1\in\CC_{-\delta}^{\frac{1}{2}-\kappa}(\IR^3)$ and $X_2\in\CC_{-\delta}^{1-\kappa}(\IR^3)$. With the renormalization procedure, we have the Wick product
\begin{equation}
\Wick{|\nabla(X_1+X_2+X_3)|^2}=\Wick{|\nabla X_1|^2}+2\Wick{\nabla X_1\cdot\nabla X_2}+2\nabla X_1\cdot\nabla X_3+\Wick{|\nabla X_2|^2}+2\nabla X_2\cdot\nabla X_3+|\nabla X_3|^2
\end{equation}
where we also need to add a localization operator. For the first term, we already have
\begin{equation}
\Wick{|\nabla X_1|^2}=|\nabla X_1^\le|^2+2\nabla X_1^\le\cdot\nabla X_1^>+\Wick{|\nabla X_1^>|^2}
\end{equation}
where the first two terms are well-defined since
\begin{equation}
X_1^\le\in\CC_{-\mu(1+\frac{1+4\kappa}{\kappa})}^{\frac{3}{2}+3\kappa}(\IR^3)\quad\text{and}\quad X_1^>\in\CC_{\mu}^{\frac{1}{2}-2\kappa}(\IR^3)
\end{equation}
hence this gives a definition of $\Wick{|\nabla X_1^>|^2}$ without additional renormalization procedure. For the second term we have
\begin{equation}
\Wick{\nabla X_1\cdot\nabla X_2}=\nabla X_1^\le\cdot\nabla X_2^\le+\nabla X_1^\le\cdot\nabla X_2^>+\nabla X_1^>\cdot\nabla X_2^\le+\Wick{\nabla X_1^>\cdot\nabla X_2^>}
\end{equation}
where again only the last term is singular. Indeed, we have
\begin{align}
\nabla X_1^\le&\in\CC_{\mu}^{-\frac{1}{2}-2\kappa}(\IR^3),\\
\nabla X_1^>&\in\CC_{-\mu(1+2\frac{1+4\kappa}{\kappa})}^{\frac{1}{2}+3\kappa}(\IR^3),\\
\nabla X_2^\le&\in\CC_{\mu}^{-2\kappa}(\IR^3),\\
\nabla X_2^>&\in\CC_{-\mu(1+2\frac{\frac{1}{2}+4\kappa}{\kappa})}^{\frac{1}{2}+3\kappa}(\IR^3)
\end{align}
for a suitable choice of parameters with Lemma \ref{Lem:LocalizationProjectors}. Finally, we have
\begin{equation}
\Wick{|\nabla X_2|^2}=|\nabla X_2^\le|^2+2\nabla X_2^\le\cdot\nabla X_2^>+\Wick{|\nabla X_2^>|^2}
\end{equation}
which is also well-defined.

\section{Construction and transformation of $\CH$}\label{sec:construction-AH} 

The Anderson Hamiltonian is the operator
\begin{equation}
\CH=-\Delta+\xi
\end{equation}
where the white noise $\xi$ belongs $\CC_{-\delta}^{-\frac{d}{2}-\kappa}(\IR^d)$ for any $\kappa,\delta>0$. We do not describe the spectral properties of this random operator, see \cite{HsuLabbe25,Ueki25} and references therein as well as Remark \ref{Remark:linearFlow}. This operator is a singular stochastic operator for $d\in\{2,3\}$, we use an exponential transform to study the associated equation. Given $u=e^Xv$, we formally have
\begin{equation}
e^X\CH e^Xv=-(\nabla e^{2X}\nabla)v+e^{2X}(\xi-\Delta X-|\nabla X|^2)v
\end{equation}
which should be better behaved than $\CH$ given a random field $X$ such that $\xi-\Delta X$ is smoother than $\xi$. This requires $X$ to be locally of Hölder regularity $2-\frac{d}{2}-\kappa$ hence the square $|\nabla X|^2$ is ill-defined for $d\ge 2$, this falls in the range of singular dispersive SPDEs and requires a probabilistic renormalization procedure. The operator makes sense for $\xi_\eps$ a smooth regularization of the noise and we want to construct $X_\eps$ such that
\begin{equation}
Y_\eps:=\xi_\eps-\Delta X_\eps-|\nabla X_\eps|^2-c_\eps
\end{equation}
converges to a limit $Y\in\CC_{-\delta}^{1-\frac{d}{2}-\kappa}$ with $c_\eps$ a diverging quantity, this is the renormalization procedure explained above. We first describe the exponential transform in two and three dimensions and then introduce a paracontrolled expansion to obtain a finer description. In both cases, we have
\begin{equation}\label{EqExpressionH}
e^X\CH e^Xv=-\nabla(e^{2X}\nabla v)+e^{2X}Yv
\end{equation}
for any $v\in H_\delta^{\frac{d}{2}-1+2\kappa}(\IR^d)$ where the definition of $X$ and $Y$ is more involved in three dimensions.

\medskip

{\bf Construction in 2D.} In two dimensions, we consider 
\begin{equation}
X_1=-(1-\Delta)^{-1}\xi\in\CC_{-\delta}^{1-\kappa}(\IR^2)
\end{equation}
and
\begin{equation}
X:=\SU_{>}X_1\in \CC_{\delta}^{1-2\kappa}(\IR^2)
\end{equation}
for any $\delta,\kappa>0$. The choice $X_1$ is natural to ensure that $\Delta X_1-\xi$ is smooth while the projection $\SU_>$ is used to localize in space the transformation. The term $|\nabla X|^2$ is defined as a Wick product and we have
\begin{equation}
\Wick{|\nabla X|^2}=\Wick{|\nabla X_1|^2}-2\nabla \SU_{\le}X_1\cdot\nabla X-|\SU_{\le}X_1|^2
\end{equation}
as explained above. We also have
\begin{align}
\xi-\Delta X&=\xi+\Delta\SU_{>}(1-\Delta)^{-1}\xi\\ 
&=\xi+\Delta(1-\Delta)^{-1}\xi-\Delta\SU_{\le}(1-\Delta)^{-1}\xi\\
&=(1-\Delta)^{-1}\xi-\Delta\SU_{\le}(1-\Delta)^{-1}\xi\\
&=-X_1-\Delta\SU_{\le}X_1
\end{align}
using $\SU_{>}=\text{Id}-\SU_{\le}$ thus we get
\begin{equation}
Y=-X_1-\Delta\SU_{\le}X_1-\Wick{|\nabla X_1|^2}+2\nabla \SU_{\le}X_1\cdot\nabla X+|\SU_{\le}X_1|^2\in\CC_{-\delta}^{-\kappa}(\IR^d).
\end{equation} 
In the end, we have
\begin{equation}	
e^X\CH e^Xv=-\nabla(e^{2X}\nabla v)+e^{2X}Yv
\end{equation}
thus the associated quadratic form is
\begin{equation}
\langle\CH u,u\rangle=\int_{\IR^2}|\nabla v(x)|^2e^{2X(x)}\drm x+\int_{\IR^2}|v(x)|^2Y(x)e^{2X(x)}\drm x
\end{equation}
for any $u=e^Xv$ with $v\in H_{\delta}^\kappa(\IR^2)$, the second term being interpreted as a duality bracket as $Y$ can not be evaluated pointwise. Since $X\in\CC_{\delta}^{1-2\kappa}(\IR^2)\subset L^\infty(\IR^2)$, we have
\begin{equation}
c_1\|w\|_{L^2(\IR^2)}^2\le\int_{\IR^2}|w(x)|^2e^{2X(x)}\drm x\le c_2\|w\|_{L^2(\IR^2)}^2
\end{equation}
for $c_1,c_2>0$ random, this is the main motivation for the localization operator $\SU_{>}$.

\medskip

{\bf Construction in 3D.} In three dimensions, we have $X_1\in\CC_{-\delta}^{\frac{1}{2}-\kappa}(\IR^3)$ and $|\nabla X_1|^2$ is still defined as a Wick product however additionnals terms are required for the renormalization, see for example \cite{GUZ,DeVecchiJiZachhuber25} and references therein. We defined above
\begin{align}
X_2&=(1-\Delta)^{-1}(\Wick{|\nabla X_1|^2})\in\CC_{-\delta}^{1-\kappa}(\IR^3),\\
X_3&=2(1-\Delta)^{-1}(\Wick{\nabla X_1\cdot\nabla X_2})\in\CC_{-\delta}^{\frac{3}{2}-\kappa}(\IR^3)
\end{align}
and we consider
\begin{equation}
X=\SU_{>}(X_1+X_2+X_3)\in\CC_{\delta}^{\frac{1}{2}-\kappa}(\IR^3)
\end{equation}
for any $\delta,\kappa>0$. Using similar computations as for $d=2$ for localization operators, we have
\begin{align}
% \Delta\SU_>(1-\Delta)^{-1}Z=-Z+(1-\Delta)^{-1}Z+\Delta\SU_\le(1-\Delta)^{-1}Z
-\Delta X&=-(1-\Delta)X+X+\Delta\SU_\le(X_1+X_2+X_3)\\
&=-\xi-\Wick{|\nabla X_1|^2}-2\Wick{\nabla X_1\cdot\nabla X_2}+X+\Delta\SU_\le(X_1+X_2+X_3)
\end{align}
where $X+\Delta\SU_\le(X_1+X_2+X_3)$ is a nicely behaved remainder. We have the Wick product
\begin{align}
\Wick{|\nabla X|^2}=\Wick{|\nabla X_1^>|^2}+2\Wick{\nabla X_1^>\cdot\nabla X_2^>}+2\nabla X_1^>\cdot\nabla X_3^>+\Wick{|\nabla X_2^>|^2}+2\nabla X_2^>\cdot\nabla X_3^>+|\nabla X_3^>|^2
\end{align}
where 
\begin{equation}
\Wick{|\nabla X_i^>|^2}=\Wick{|\nabla X_i|^2}-|\nabla X_i^\le|^2-2\nabla X_i^\le\cdot\nabla X_i^>
\end{equation}
for $i\in\{1,2\}$ and
\begin{equation}
\Wick{\nabla X_1^>\cdot\nabla X_2^>}=\Wick{\nabla X_1\cdot\nabla X_2}-\nabla X_1^\le\cdot\nabla X_2^\le-\nabla X_1^\le\cdot\nabla X_2^>-\nabla X_1^>\cdot\nabla X_2^\le
\end{equation}
using the computations of the previous Appendix. Thus we get
\begin{align}
Y&=\xi-\Delta X-\Wick{|\nabla X|^2}\\
&=X+\Delta\SU_\le(X_1+X_2+X_3)+2\nabla X_1^>\cdot\nabla X_3^>+2\nabla X_2^>\cdot\nabla X_3^>+|\nabla X_3^>|^2\\
&\quad-|\nabla X_1^\le|^2-2\nabla X_1^\le\cdot\nabla X_1^>-|\nabla X_2^\le|^2-2\nabla X_2^\le\cdot\nabla X_2^>\\
&\quad-\nabla X_1^\le\cdot\nabla X_2^\le-\nabla X_1^\le\cdot\nabla X_2^>-\nabla X_1^>\cdot\nabla X_2^\le
\end{align}
and a careful track of each terms give $Y\in\CC_{-\mu}^{-\frac{1}{2}-\kappa}(\IR^3)$. As in two dimensions, we have
\begin{equation}
e^X\CH e^Xv=-\nabla(e^{2X}\nabla v)+e^{2X}Yv
\end{equation}
with
\begin{equation}
\langle\CH u,u\rangle=\int_{\IR^2}|\nabla v(x)|^2e^{2X(x)}\drm x+\int_{\IR^2}|v(x)|^2Y(x)e^{2X(x)}\drm x
\end{equation}
where the second integral is interpreted as a duality bracket since $Y$ is not a function, well-defined for $v\in H_\delta^{\frac{1}{2}+2\kappa}(\IR^3)$.

\medskip

{\bf Paracontrolled calculus.} We do not give here a general introduction on paracontrolled calculus, see for example the seminal work \cite{GIP} or the lecture notes \cite{GubinelliPerkowskiNotes16} as well as references therein. In this work, we only need the definition of the paraproduct and resonant terms with their continuity results that we recall here. Given Paley-Littlewood projectors defined before, one can consider formally the product as
\begin{align}
u\cdot v&=\sum_{n,m\ge0}\Delta_nu\cdot\Delta_mv\\
&=\sum_{n<m-1}\Delta_nu\cdot\Delta_mv+\sum_{|n-m|\le 1}\Delta_nu\cdot\Delta_mv+\sum_{n>m+1}\Delta_nu\cdot\Delta_mv\\
&=:\P_uv+\PI(u,v)+\P_vu
\end{align}
with $\P_uv$ the paraproduct of $v$ by $u$ and $\PI(u,v)$ the resonant term. The following continuity result can be proved, this can be proved for example with a proof in the same lines as Lemma $2.1$ in \cite{GIP}.

\medskip

\begin{proposition}
Let $\alpha,\beta\in\IR$ and $\mu,\nu\in\IR$. We have
\begin{equation}
\|\P_uv\|_{H_{\mu+\nu}^{\alpha\wedge0+\beta}(\IR^d)}\lesssim\|u\|_{H_\mu^\alpha(\IR^d)}\|v\|_{\CC_\nu^\beta(\IR^d)}
\end{equation}
and
\begin{equation}
\|\P_uv\|_{H_{\mu+\nu}^{\alpha\wedge0+\beta}(\IR^d)}\lesssim\|u\|_{\CC_\mu^\alpha(\IR^d)}\|v\|_{H_\nu^\beta(\IR^d)}.
\end{equation}
If moreover $\alpha+\beta>0$, we get
\begin{equation}
\|\PI(u,v)\|_{H_{\mu+\nu}^{\alpha+\beta}}\lesssim\|u\|_{H_\mu^\alpha(\IR^d)}\|v\|_{\CC_\nu^\beta(\IR^d)}.
\end{equation}
\end{proposition}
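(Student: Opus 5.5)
We follow the proof of Lemma 2.1 in \cite{GIP}, replacing Hölder norms by weighted $L^2$ norms on one factor and using that the weights $\langle x\rangle^\mu$ and $\langle x\rangle^\nu$ factor through the localized product. The first step is a weighted Bernstein-type fact. The kernels of $\Delta_n$ and $S_{n}=\sum_{k<n}\Delta_k$ have rapid decay on scale $2^{-n}$. Peetre's inequality $\langle x\rangle^\mu\lesssim\langle y\rangle^\mu\langle x-y\rangle^{|\mu|}$ then shows that $\Delta_n$ and $S_n$ are bounded on $L^p_\mu$ uniformly in $n$, for $p\in\{2,\infty\}$. It also shows that a function with Fourier support in a ball (resp. annulus) of size $2^m$ can be recovered from finitely many such blocks with the same weighted control.

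For the paraproduct $\P_uv=\sum_m S_{m-1}u\,\Delta_mv$, each term $S_{m-1}u\,\Delta_m v$ has Fourier support in an annulus of size $2^m$. Therefore
\begin{equation}
\|\Delta_j\P_uv\|_{L^2_{\mu+\nu}}\lesssim\sum_{|m-j|\le N}\|S_{m-1}u\|_{L^2_\mu}\|\Delta_mv\|_{L^\infty_\nu}
\end{equation}
for a fixed integer $N$, using $\langle x\rangle^{\mu+\nu}=\langle x\rangle^\mu\langle x\rangle^\nu$ and Hölder. We then bound $\|S_{m-1}u\|_{L^2_\mu}\lesssim\sum_{k<m}2^{-k\alpha}c_k\|u\|_{H^\alpha_\mu}$ with $(c_k)\in\ell^2$. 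When $\alpha<0$ this is $\lesssim 2^{-m\alpha}\tilde c_m\|u\|_{H^\alpha_\mu}$ with $(\tilde c_m)\in\ell^2$, by a discrete Young inequality. When $\alpha\ge0$ the sum is bounded by $\|u\|_{L^2_\mu}$; strictly at $\alpha=0$ one uses the uniform $L^2_\mu$ boundedness of $S_m$ directly. Combining with $\|\Delta_mv\|_{L^\infty_\nu}\le2^{-m\beta}\|v\|_{\CC^\beta_\nu}$ and summing in $\ell^2$ over $j$ gives the first estimate. The second estimate is identical with the roles of $L^\infty$ and $L^2$ exchanged: we bound $\|S_{m-1}u\|_{L^\infty_\mu}$ by $\|u\|_{\CC^\alpha_\mu}$ times $2^{-m(\alpha\wedge0)}$, up to an arbitrarily small loss when $\alpha=0$ (or $m$ when $\alpha=0$ exactly), and take $\|\Delta_mv\|_{L^2_\nu}=2^{-m\beta}c_m\|v\|_{H^\beta_\nu}$ with $(c_m)\in\ell^2$.

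For the resonant term, $\Delta_nu\,\Delta_mv$ with $|n-m|\le1$ has Fourier support only in a ball of size $2^{n}$. This is why $\alpha+\beta>0$ is needed. We estimate
\begin{equation}
\|\Delta_j\PI(u,v)\|_{L^2_{\mu+\nu}}\lesssim\sum_{n\ge j-N}\sum_{|m-n|\le1}\|\Delta_nu\|_{L^2_\mu}\|\Delta_mv\|_{L^\infty_\nu}\lesssim\sum_{n\ge j-N}2^{-n(\alpha+\beta)}c_n\|u\|_{H^\alpha_\mu}\|v\|_{\CC^\beta_\nu}.
\end{equation}
Multiplying by $2^{j(\alpha+\beta)}$, the kernel $2^{-(n-j)(\alpha+\beta)}\mathbf 1_{n\ge j-N}$ is summable precisely because $\alpha+\beta>0$. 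Young's inequality for sequences then gives the $\ell^2$ bound, and hence the $H^{\alpha+\beta}_{\mu+\nu}$ estimate.

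The only nonroutine point is the weighted localization in the first step. We must check that $\Delta_j$ applied to a product with compact Fourier support preserves the $L^2_{\mu+\nu}$ norm up to a constant, and that $\|fg\|_{L^2_{\mu+\nu}}\le\|f\|_{L^2_\mu}\|g\|_{L^\infty_\nu}$ under the paper's convention that $L^2_\mu$ carries the polynomial weight. We would handle this via Peetre's inequality, or alternatively by the equivalence $\|u\|_{\CB^\alpha_{p,q,\mu}}\simeq\|\langle\cdot\rangle^\mu u\|_{\CB^\alpha_{p,q,0}}$ recalled in Appendix \ref{Sec:FunctionalSpaces}. The latter reduces everything to the unweighted case, since $\langle\cdot\rangle^{\mu+\nu}uv=(\langle\cdot\rangle^\mu u)(\langle\cdot\rangle^\nu v)$. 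The remaining cost is the commutator between the weight and the paraproduct decomposition, which is smoothing and hence harmless.
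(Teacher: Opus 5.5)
Your proposal is correct and takes the same route as the paper, which only says the estimates follow along the lines of Lemma 2.1 in \cite{GIP}. Your Peetre-inequality argument (uniform boundedness of $\Delta_j$ and $S_j$ on $L^2_\mu$ and $L^\infty_\mu$, together with $\langle x\rangle^{\mu+\nu}=\langle x\rangle^{\mu}\langle x\rangle^{\nu}$ and H\"older) supplies the weighted ingredient that reference leaves implicit. Your remark that the second bound holds only with an arbitrarily small loss at $\alpha=0$ is also right and corrects a minor overstatement in the proposition: $\CC^0_\mu=\CB^0_{\infty,\infty,\mu}$ does not embed into $L^\infty_\mu$, so one only gets $\|S_{m-1}u\|_{L^\infty_\mu}\lesssim m\|u\|_{\CC^0_\mu}$.
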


\medskip

In addition, one also needs to bound the corrector
\begin{equation}
\DC(u,v,w)=\PI(\P_uv,w)-u\PI(v,w)
\end{equation}
introduced initially in \cite{GIP}. We have the following continuity result which is proved in a similar proof.

\medskip

\begin{proposition}
Let $\alpha,\beta,\gamma\in\IR$ such that $\beta+\gamma<0<\alpha+\beta+\gamma$ and $\mu_0,\mu_1,\mu_2>0$. The trilinear operator $\DC$ extends uniquely as a continuous map from $H_{\mu_0}^\alpha(\IR^d)\times\CC_{\mu_1}^\beta(\IR^d)\times\CC_{\mu_2}^\gamma(\IR^d)$ to $H_{\mu_0+\mu_1+\mu_2}^{\alpha+\beta+\gamma}(\IR^d)$.
\end{proposition}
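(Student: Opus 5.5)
The plan is to reduce the trilinear estimate to a Fourier-localized computation on Paley--Littlewood blocks, in the spirit of Lemma 2.4 of \cite{GIP}, and to handle the weights separately using that $\langle x\rangle^{\mu}$ is a slowly varying multiplier. Three choices drive the argument. First, for smooth $u,v,w$ I will write
\begin{equation}
\DC(u,v,w)=\sum_{|k-l|\le1}\Big(\Delta_k\big(\P_uv\big)\Delta_lw-\sum_{j}S_{j-1}u\,\Delta_k\Delta_jv\,\Delta_lw\Big)+\sum_{|k-l|\le1}\sum_j\big(S_{j-1}u-u\big)\Delta_k\Delta_jv\,\Delta_lw,
\end{equation}
where $S_{j-1}=\sum_{i<j-1}\Delta_i$ and $\P_uv=\sum_jS_{j-1}u\,\Delta_jv$. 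Second, in the first sum only $|j-k|\lesssim1$ contributes, so it is a commutator $[\Delta_k,S_{j-1}u]\Delta_jv$. Third, in the second sum $u-S_{j-1}u=\sum_{i\ge j-1}\Delta_iu$ is a high-frequency tail.

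The steps, in order, are as follows.

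\textbf{Step 1: commutator term.} I will use the standard commutator bound
\begin{equation}
\|[\Delta_k,f]g\|_{L^p}\lesssim2^{-k}\|\nabla f\|_{L^\infty}\|g\|_{L^p}.
\end{equation}
Applying it with $f=S_{j-1}u$ gives a gain of $2^{-k}\|\nabla S_{k}u\|$. In the $H^\alpha$ setting, Bernstein gives $\|\nabla S_ku\|_{L^2}\lesssim2^{k(1-\alpha)}\|u\|_{H^\alpha}$ when $\alpha<1$. This needs care, because $u$ is the $L^2$-based factor while $v,w$ are $L^\infty$-based. The block $k$ (with $l\sim k$) is then bounded in $L^2$ by
\begin{equation}
2^{-k\alpha}c_k\,2^{-k\beta}\|v\|_{\CC^\beta}\,2^{-k\gamma}\|w\|_{\CC^\gamma},\qquad (c_k)\in\ell^2 .
\end{equation}
The output block sits at frequencies $\lesssim2^k$. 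Summing over $k\ge n$ for the $n$-th output block therefore gives $2^{-n(\alpha+\beta+\gamma)}$ with an $\ell^2$ sequence, and this sum converges precisely because $\alpha+\beta+\gamma>0$. If $\alpha\ge1$, I will replace the commutator estimate by a Taylor expansion of higher order, or simply reduce $\alpha$ slightly below $1$, which is harmless when the sum is positive.

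\textbf{Step 2: tail term.} For the $i\ge j-1\sim k$ part, $\Delta_iu$ has $L^2$ norm $2^{-i\alpha}c_i$. Multiplying by $\Delta_kv\,\Delta_lw$ gives $2^{-i\alpha-k(\beta+\gamma)}$. Since $\beta+\gamma<0$ and $i\gtrsim k$, I will bound $2^{-k(\beta+\gamma)}\le2^{-i(\beta+\gamma)}$. The term with index $i$ is then $2^{-i(\alpha+\beta+\gamma)}$ times an $\ell^2$ sequence, localized at frequency $\lesssim2^i$. Summation again converges by $\alpha+\beta+\gamma>0$. This step is exactly where the hypothesis $\beta+\gamma<0$ is used.

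\textbf{Step 3: weights and extension.} Weighted norms are equivalent to $\|\langle x\rangle^\mu\cdot\|$ in unweighted Besov spaces, as recalled in Appendix~\ref{Sec:FunctionalSpaces}. I will push the weights $\langle x\rangle^{\mu_0},\langle x\rangle^{\mu_1},\langle x\rangle^{\mu_2}$ onto each factor. The Paley--Littlewood kernels are Schwartz and $\langle x\rangle^{\mu}/\langle y\rangle^{\mu}\lesssim\langle x-y\rangle^{|\mu|}$, so the block estimates above hold with $L^p_{\mu}$ norms, and the weights add by Hölder. Density of smooth functions then gives the unique continuous extension. Strictly speaking, $\CC$ is not separable, so the extension is taken on the closure of smooth functions or via the bound itself.

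I expect the main obstacle to be the bookkeeping in Step 1 with mixed $L^2/L^\infty$ integrability and weights. In particular, the commutator estimate has to be performed with the weighted kernel, and with the $L^2$ factor placed on $u$ rather than on $v$. To handle this, I will first try to write $\Delta_k$ as convolution with $2^{kd}\chi(2^k\cdot)$ and use the mean value formula
\begin{equation}
f(x)-f(y)=(x-y)\cdot\int_0^1\nabla f\big(y+\theta(x-y)\big)\drm\theta .
\end{equation}
The extra factor $|x-y|$ is absorbed by the Schwartz decay, as is the weight ratio $\langle x-y\rangle^{|\mu|}$.
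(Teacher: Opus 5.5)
Your decomposition of $\DC(u,v,w)$ into the commutator part $\sum_{|k-l|\le1}\sum_j[\Delta_k,S_{j-1}u]\Delta_jv\,\Delta_lw$ and the tail part $\sum_{|k-l|\le1}\sum_j(S_{j-1}u-u)\Delta_k\Delta_jv\,\Delta_lw$ is the corrector argument of \cite{GIP}. That is all the paper invokes (``proved in a similar proof''), and your adaptations are the right ones. The bound $\|[\Delta_k,f]g\|_{L^2}\lesssim2^{-k}\|\nabla f\|_{L^2}\|g\|_{L^\infty}$ does follow from the mean value formula and Minkowski's inequality. The polynomial weights are absorbed by $\langle x-y\rangle^{|\mu|}$ against the Schwartz kernels. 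Defining $\DC$ through the two absolutely convergent series is the correct way around the non-density of smooth functions in $\CC^\beta$.

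Two bookkeeping points. First, the $\ell^2$ sequence in your Step~1 block bound comes from $\|\nabla S_ku\|_{L^2}\le2^{k(1-\alpha)}\sum_{i<k}2^{-(k-i)(1-\alpha)}2^{i\alpha}\|\Delta_iu\|_{L^2}$ together with Young's inequality. It does not come from the bound by $\|u\|_{H^\alpha}$ you state, which only yields $\CB^{\alpha+\beta+\gamma}_{2,\infty}$. Second, in Step~2 you should sum $\sum_{k\le i+2}2^{-k(\beta+\gamma)}\lesssim2^{-i(\beta+\gamma)}$ as a geometric series. Bounding each of the $\sim i$ terms by $2^{-i(\beta+\gamma)}$ costs a factor $i$, i.e.\ an $\eps$ of regularity; the geometric sum is where the strict inequality $\beta+\gamma<0$ enters. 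With this, your argument proves the statement for $\alpha\in(0,1)$; note that $\alpha>-(\beta+\gamma)>0$ is forced by the hypotheses.

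The genuine problem is your fallback for $\alpha\ge1$. Lowering $\alpha$ to some $\alpha'<1$ only gives $H^{\alpha'+\beta+\gamma}_{\mu_0+\mu_1+\mu_2}$, which is strictly weaker than claimed. Moreover, $\alpha'+\beta+\gamma$ can even be negative when $1+\beta+\gamma\le0$. A higher-order Taylor expansion cannot help either, because the obstruction is the first-order term. If $K_k$ denotes the kernel of $\Delta_k$, then
\begin{equation*}
[\Delta_k,f]g=-\nabla f\cdot\big((zK_k)*g\big)+R_k(f,g),
\end{equation*}
where $R_k(f,g)$ carries a factor $2^{-2k}\nabla^2f$. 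Here $zK_k(z)$ is $2^{-k}$ times a Paley--Littlewood-type kernel at the same scale, so a commutator with a multiplication operator gains exactly one derivative however smooth $f$ is.

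In fact the proposition as stated is false for $\alpha>1$. Take $u\in\CS(\IR^d)$ with $\widehat u$ supported in $\{|\xi|<\frac12\}$, so that $u$ lies in every $H^\alpha_{\mu_0}$ and $S_{j-1}u=u$ for $j\ge2$. Then, up to smooth terms, the leading part of $\DC(u,v,w)$ is $-\nabla u\cdot\sum_{|k-l|\le1}\big((zK_k)*v\big)\Delta_lw$. This is a resonant product of an element of $\CC^{1+\beta}$ with one of $\CC^\gamma$. For generic $v,w$ it has regularity exactly $1+\beta+\gamma<\alpha+\beta+\gamma$, and it is not even defined if $1+\beta+\gamma\le0$.

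So the restriction $\alpha\in(0,1)$ from \cite{GIP} is a missing hypothesis of the proposition. This is harmless for the paper, which only applies the result with $\alpha<1$. You should add this hypothesis explicitly rather than claim to cover $\alpha\ge1$.
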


\medskip

To obtain Strichartz inequalities for the linear equation, our proof follows a perturbative argument with respect to the Laplacian. Given the previous construction with \eqref{EqExpressionH} which makes sense for any $v\in H_\delta^{\frac{d}{2}-1+2\kappa}(\IR^d)$, we have
\begin{equation}\label{ExpH}
e^{-X}\CH e^Xv=-\Delta v-2\nabla X\cdot\nabla v+Yv
\end{equation}
if $v$ is smooth enough for the cross term to be well-defined, that is the stronger condition $v\in H_\delta^{\frac{d}{2}+2\kappa}(\IR^d)$. This gives the bound
\begin{equation}
\|(e^{-X}\CH e^X+\Delta)v\|_{H^{1-\frac{d}{2}-\kappa}(\IR^d)}\lesssim\|v\|_{H^{\frac{d}{2}+\gamma}(\IR^d)\cap L_\mu^2(\IR^d)}
\end{equation}
for any $\gamma>0$. To obtain Strichartz inequalities, we use paracontrolled calculus to improve this bound to gain regularity using $\gamma$ with a conjugated operator $\CH^\sharp$, as done in our previous work \cite{MZ} on compact surfaces, see also \cite{DebusscheMouzard24,DeVecchiJiZachhuber25}. We introduce the operator
\begin{equation}
\CH^\sharp:=(e^X\Gamma)^{-1}\CH(e^X\Gamma)
\end{equation}
for a suitable random map $\Gamma$ in order to improve the previous comparison. This corresponds to a paracontrolled ansatz based on \eqref{ExpH}, again more involved in three dimensions. For $d=2$, we consider
\begin{equation}
\Phi(v):=v-\P_{\nabla v}Z_1-\P_vZ_2
\end{equation}
with
\begin{align}
Z_1&=2(1-\Delta)^{-1}\nabla(X-X_N)\in\CC_\delta^{2-\kappa}(\IR^2),\\
Z_2&=(1-\Delta)^{-1}(Y^>-Y_N^>)\in\CC_\delta^{2-\kappa}(\IR^2)
\end{align}
with $X_N$ and $Y_N^>$ smooth approximations respectively of $X$ and $Y^>:=\SU_>Y$. In three dimensions, the higher order paracontrolled expansion
\begin{equation}
\Phi(v):=v-\P_{\nabla v}(Z_1+Z_3)-\P_v(Z_2+Z_4)
\end{equation}
is needed with
\begin{align}
Z_1&=2(1-\Delta)^{-1}\nabla(X-X_N)\in\CC_\delta^{\frac{3}{2}-\kappa}(\IR^3),\\
Z_2&=(1-\Delta)^{-1}(Y^>-Y_N^>)\in\CC_\delta^{\frac{3}{2}-\kappa}(\IR^3),\\
Z_3&=2(1-\Delta)^{-1}\big(\PI(\nabla X,Z_1)+\P_{\nabla X}Z_1\big)\in\CC_\delta^{2-\kappa}(\IR^3),\\
Z_4&=2(1-\Delta)^{-1}\big(\PI(\nabla X,Z_2)+\P_{\nabla X}Z_2\big)\in\CC_\delta^{2-\kappa}(\IR^3).
\end{align}
The map $\Phi$ is a compact perturbation of the identity on $H^\sigma(\IR^d)$ for $\sigma<2$ in both cases. While it is not a priori invertible, one can truncate the stochastic terms in order to have a small pertubation of the identity without impacting the regularity of the objects involved. We do not keep the dependance with respect to $N$ in the notation to simplify the computations however the map $\Phi=\Phi_N$ converges to the identity as $N$ goes to infinity thus this map is invertible for $N=N(X,Y_>)$ large enough. For such parameter fixed high enough, we denote as $\Gamma:H^\sigma\to H^\sigma$ its inverse as done in \cite{GUZ}, see also \cite{Mouzard} for a different choice of truncation. Since $X_N$ and $Y_{>,N}$ are smooth functions, this truncation has no impact on the regularity study needed for this work and the map $\Gamma$ is defined by an implicit relation, for example
\begin{equation}
\Gamma v^\sharp=\P_{\nabla\Gamma v^\sharp}2\nabla(1-\Delta)^{-1}(X-X_N)+\P_{\Gamma v^\sharp}(1-\Delta)^{-1}(-Y^>+Y_N^>)+v^\sharp
\end{equation}
with $v^\sharp\in L^2(\IR^2)$ in two dimensions. It allows to obtain the following proposition.

\medskip

\begin{proposition}\label{prop:fineHcomparison}
Let $\mu>0$ and $\gamma\in(0,2-\frac{d}{2})$. We have
\begin{equation}
\|(\CH^\sharp+\Delta)v^\sharp\|_{H^{\gamma}(\IR^d)}\lesssim\|v^\sharp\|_{H^{\frac{d}{2}+\gamma+\kappa}(\IR^d)\cap L_\mu^2(\IR^d)}
\end{equation}
for any $\kappa>0$.
\end{proposition}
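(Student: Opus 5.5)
We work in two dimensions first and write $v=\Gamma v^\sharp$, so that $v=v^\sharp+\P_{\nabla v}Z_1+\P_vZ_2$ by the implicit relation defining $\Gamma$. Since $\CH^\sharp=\Gamma^{-1}e^{-X}\CH e^X\Gamma$, we have
\begin{equation}
(\CH^\sharp+\Delta)v^\sharp=\Gamma^{-1}\big(e^{-X}\CH e^Xv+\Delta v\big)+\big(\Delta-\Gamma^{-1}\Delta\Gamma\big)v^\sharp .
\end{equation}
It is more convenient to avoid $\Gamma^{-1}$ and compute directly with \eqref{ExpH}. Writing $\Gamma=\mathrm{Id}+R$ with $Rv^\sharp=\P_{\nabla v}Z_1+\P_vZ_2$, one has $\Gamma\CH^\sharp v^\sharp=e^{-X}\CH e^Xv=-\Delta v-2\nabla X\cdot\nabla v+Yv$. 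Hence
\begin{equation}
\Gamma(\CH^\sharp+\Delta)v^\sharp=-\Delta v^\sharp-\Delta Rv^\sharp-2\nabla X\cdot\nabla v+Yv+\Gamma\Delta v^\sharp .
\end{equation}
Rather than this, the cleaner approach, as in \cite{GUZ,MZ}, is to show
\begin{equation}
e^{-X}\CH e^X\Gamma v^\sharp=-\Delta v^\sharp+\text{(remainder in }H^\gamma)
\end{equation}
and then to use the continuity of $\Gamma^{-1}$ on $H^\gamma$ and the estimate $(\Gamma^{-1}-\mathrm{Id})\Delta v^\sharp\in H^\gamma$. The latter requires some care, because $\Delta v^\sharp$ only lies in $H^{\gamma-1+\kappa}$ for $v^\sharp\in H^{1+\gamma+\kappa}$. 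We handle this by writing $R\Delta v^\sharp$ directly and observing that $\P_{\nabla\Delta v}Z_1$ loses derivatives. The correct route is therefore to compare $\Gamma^{-1}(-\Delta)\Gamma$ with $-\Delta$ through commutators: $\Delta\P_fZ=\P_f\Delta Z+\P_{\Delta f}Z+2\P_{\nabla f}\nabla Z$.

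The key algebraic step is the expansion of
\begin{equation}
-\Delta v-2\nabla X\cdot\nabla v+Yv
\end{equation}
with $v=v^\sharp+\P_{\nabla v}Z_1+\P_vZ_2$. The term $-\Delta\P_{\nabla v}Z_1$ produces $-\P_{\nabla v}\Delta Z_1=\P_{\nabla v}\big(2\nabla(X-X_N)-Z_1\big)$, since $(1-\Delta)Z_1=2\nabla(X-X_N)$. This cancels $-2\P_{\nabla v}\nabla X$ up to smooth terms and $\P_{\nabla v}Z_1$. Similarly, $-\Delta\P_vZ_2$ cancels $\P_vY^>$, and $\SU_\le Y$ is smooth with polynomial growth, which is handled through the weight $L^2_\mu$. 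The remaining terms are the following.

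\textbf{(a)} The resonant and reversed paraproducts $\PI(\nabla X,\nabla v)+\P_{\nabla X}\nabla v$ and $\PI(Y,v)+\P_Yv$. With $\nabla X\in\CC^{-\kappa}_\delta$ and $\nabla v\in H^{\gamma+\kappa'}$, these belong to $H^{\gamma}$ provided $v\in H^{1+\gamma+\kappa}$. In particular $\PI(\nabla X,\nabla v)$ is well defined because $\gamma>0$, so no renormalisation is needed in $d=2$.

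\textbf{(b)} The commutator terms $\P_{\Delta v}Z+2\P_{\nabla v}\nabla Z$. Here $Z\in\CC^{2-\kappa}$, $\nabla v\in H^{\gamma+\kappa}$ and $\Delta v\in H^{\gamma-1+\kappa}$, so the paraproduct bounds give $H^{\gamma}$ regularity as long as $\gamma<1$, which is exactly the range $\gamma<2-\frac d2$.

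\textbf{(c)} The terms involving the smooth truncations $X_N$ and $Y_N^>$, which are harmless.

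In each case the weight $\mu>0$ absorbs the polynomial growth of $\SU_\le$-type objects and of the $\CC_{-\delta}$ norms. Finally, $v=\Gamma v^\sharp\in H^{1+\gamma+\kappa}$ with a bound by $v^\sharp$, since $\Gamma$ is continuous on $H^\sigma$ for $\sigma<2$. Applying $\Gamma^{-1}$, which is continuous on $H^\gamma$, then yields the claim.

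In three dimensions the same scheme applies, but with the second-order ansatz using $Z_3$ and $Z_4$. The terms $\PI(\nabla X,\nabla v)$ and $\PI(\nabla X,v)$ are now singular, since $\nabla X\in\CC^{-\frac12-\kappa}$. We expand $\nabla v=\P_{\nabla^2 v}Z_1+\P_{\nabla v}\nabla Z_1+\dots$ and use the commutator $\DC$ to reduce them to $\nabla v\,\PI(\nabla X,\nabla Z_1)$, whose products are defined in Appendix \ref{Sec:stochastic_bounds} as renormalised stochastic objects. The leftover first-order singular terms are exactly what $Z_3$ and $Z_4$ cancel. We expect this step to be the main obstacle: tracking each remainder to $H^\gamma$ with $\gamma<\frac12$ while $v\in H^{\frac32+\gamma+\kappa}$, and checking that every stochastic product involved converges in the weighted spaces.
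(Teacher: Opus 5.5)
Your core argument is the paper's. You substitute the ansatz $v=\Gamma v^\sharp=v^\sharp+\P_{\nabla v}Z_1+\P_vZ_2$ into $-\Delta v-2\nabla X\cdot\nabla v+Yv$. The choice of $Z_1,Z_2$ cancels $\P_{\nabla v}2\nabla X$ and $\P_vY^>$ up to smooth truncation terms. The commutators, the resonant and reversed paraproducts, and the $X_N,Y_N^>$ terms are then bounded in $H^\gamma$, up to an arbitrarily small loss absorbed by $\kappa$. In three dimensions you add $Z_3,Z_4$ and use $\DC$ to reduce $\PI(\nabla X,\nabla v)$ to $\nabla v\,\PI(\nabla X,\nabla Z_i)$. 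Your 3D part is a sketch at the same level of detail as the paper's.

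The one thing to repair is the detour in your first paragraph, which leaves a dangling term. You claim that $(\mathrm{Id}-\Gamma^{-1})\Delta v^\sharp$ cannot be estimated directly because a paraproduct ``loses derivatives''. That is not true. $\Gamma^{-1}=\Phi$ is explicit; note that it is $\mathrm{Id}-\Phi$ that appears, not $R=\Gamma-\mathrm{Id}$. In $d=2$ this gives $(\mathrm{Id}-\Gamma^{-1})\Delta v^\sharp=\P_{\nabla\Delta v^\sharp}Z_1+\P_{\Delta v^\sharp}Z_2$. In $d=3$ the same formula holds with $Z_1+Z_3$ and $Z_2+Z_4$.

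Since $\nabla\Delta v^\sharp\in H^{\gamma-2+\kappa}$ has negative regularity, the paraproduct estimate adds the regularity of $Z_1\in\CC^{2-\kappa'}_\delta$. This gives $\P_{\nabla\Delta v^\sharp}Z_1\in H^{\gamma+\kappa-\kappa'}\subset H^\gamma$ for $\kappa'<\kappa$. In $d=3$ the count is $(\gamma-\frac32+\kappa)+(\frac32-\kappa')$. The term $\P_{\Delta v^\sharp}Z_2$ is even more regular.

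So the commutator comparison of $\Gamma^{-1}\Delta\Gamma$ with $\Delta$ that you announce, and never carry out, is unnecessary. As written, however, your closing step ``apply $\Gamma^{-1}$'' only controls $\Gamma^{-1}\big(e^{-X}\CH e^X\Gamma v^\sharp+\Delta v^\sharp\big)$ and silently drops this term. You should write $(\CH^\sharp+\Delta)v^\sharp=\Gamma^{-1}\big(e^{-X}\CH e^X\Gamma v^\sharp+\Delta v^\sharp\big)+(\mathrm{Id}-\Gamma^{-1})\Delta v^\sharp$ and bound both pieces as above. This is what the paper compresses into ``$\Gamma^{-1}$ is the identity up to a regularizing perturbation''.

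Two minor points in 3D. First, the only singular resonant term is $\PI(\nabla X,\nabla v)$: there is no $\PI(\nabla X,v)$, and $\PI(Y,v)$ is harmless. Second, the paper obtains $\PI(\nabla X,\nabla Z_i)$ as a stochastic limit without renormalization, rather than as a renormalised object.
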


\medskip

\begin{proof}
The operator is given by
\begin{equation}
e^{-X}\CH e^Xv=-\Delta v-2\nabla X\cdot\nabla v+Yv
\end{equation}
and we now assume that $v=\Gamma v^\sharp$ with $v^\sharp\in H^{\frac{d}{2}+\gamma}(\IR^d)$ with $\gamma>0$. We first consider $d=2$, the computations are heavier but similar in three dimensions. We have
\begin{equation}
v=\P_{\nabla v}Z_1+\P_vZ_2+v^\sharp
\end{equation}
with $(1-\Delta)Z_1=\nabla(X-X_N)$ and $(1-\Delta)Z_2=-Y^>+Y_N^>$ thus
\begin{align}
e^{-X}\CH e^X\Gamma v^\sharp&=-\Delta v^\sharp-\Delta\P_{\nabla v}Z_1-\Delta\P_vZ_2-2\nabla X\cdot\nabla v+Yv\\
&=-\Delta v^\sharp-\P_{\nabla v}\Delta Z_1-\P_v\Delta Z_2-[\Delta,\P_{\nabla v}]Z_1-[\Delta,\P_v]Z_2\\
&\quad-2\P_{\nabla X}\nabla v-2\PI(\nabla X,\nabla v)-\P_{\nabla v}2\nabla X+\P_Yv+\PI(Y,v)+\P_vY\\
&=-\Delta v^\sharp-\P_{\nabla v}Z_1-\P_v Z_2-[\Delta,\P_{\nabla v}]Z_1-[\Delta,\P_v]Z_2\\
&\quad-2\P_{\nabla X}\nabla v-2\PI(\nabla X,\nabla v)-\P_{\nabla v}2\nabla X_N+\P_Yv+\PI(Y,v)+\P_v(Y_N^>+Y^\le)
\end{align}
where the choice of $Z_1$ and $Z_2$ cancels the roughest term with $Y^\le:=\SU_\le Y$. We get
\begin{align}
(e^{-X}\CH e^X\Gamma+\Delta)v^\sharp&=-\P_{\nabla v}Z_1-\P_v Z_2-[\Delta,\P_{\nabla v}]Z_1-[\Delta,\P_v]Z_2-2\P_{\nabla X}\nabla v-2\PI(\nabla X,\nabla v)\\
&\quad-\P_{\nabla v}2\nabla X_N+\P_Yv+\PI(Y,v)+\P_v(Y_N^>+Y^\le)
\end{align}
where the roughest term are $\PI(\nabla X,\nabla v)$ and $\P_{\nabla X}\nabla v$. Since $v\in H_\delta^{1+\gamma}(\IR^2)$ with $\gamma>0$, these terms belong to $H^{\gamma-\kappa}(\IR^2)$ which is the claimed result up to applying $\Gamma^{-1}$ which is the identity up to a regularizing perturbation. In three dimensions, we have 
\begin{equation}
v=\P_{\nabla v}(Z_1+Z_3)+\P_v(Z_2+Z_4)+v^\sharp
\end{equation}
with $(1-\Delta)Z_3=2\PI(\nabla X,Z_1)+2\P_{\nabla X}Z_1$ and $(1-\Delta)Z_4=2\PI(\nabla X,Z_2)+2\P_{\nabla X}Z_2$. Using the previous computation, we get
\begin{align}
e^{-X}\CH e^X\Gamma v^\sharp&=-\Delta v^\sharp-\P_{\nabla v}(Z_1+Z_3)-\P_v (Z_2+Z_4)-[\Delta,\P_{\nabla v}](Z_1+Z_3)-[\Delta,\P_v](Z_2+Z_4)\\
&\quad+\P_{\nabla v}(1-\Delta)Z_3+\P_v(1-\Delta)Z_4-2\PI(\nabla X,\nabla v)\\
&\quad-2\P_{\nabla X}\nabla v-\P_{\nabla v}2\nabla X_N+\P_Yv+\PI(Y,v)+\P_v(Y_N^>+Y^\le)
\end{align}
where the terms of the last line are well-controlled. Using the paracontrolled expansion for $v$, we get
\begin{equation}
\nabla v=\P_{\nabla v}\nabla Z_1+\P_{\Delta v}Z_1+\P_v\nabla Z_2+\P_{\nabla v}Z_2+\nabla v^\sharp
\end{equation}
hence
\begin{equation}
\PI(\nabla X,\nabla v)=\PI(\nabla X,\P_{\nabla v}\nabla Z_1)+\PI(\nabla X,\P_v\nabla Z_2)+\PI(\nabla X,\P_{\Delta v}Z_1+\P_{\nabla v}Z_2+\nabla v^\sharp)
\end{equation}
where the last term is well-defined for $\gamma>\frac{1}{2}$ since $\nabla X\in\CC_\delta^{-\frac{1}{2}-\kappa}(\IR^3)$. For the first two terms, we use the corrector from paracontrolled calculus to get
\begin{equation}
\PI(\nabla X,\P_{\nabla v}\nabla Z_i)=\nabla v\PI(\nabla X,\nabla Z_i)+\DC(\nabla v,\nabla X,\nabla Z_i)
\end{equation}
for $i\in\{1,2\}$. The corrector is well-defined since $\nabla v\in H^\gamma(\IR^3),\nabla X\in\CC_\delta^{-\frac{1}{2}-\kappa}(\IR^3),\nabla Z_i\in\CC_\delta^{\frac{1}{2}-\kappa}(\IR^3)$ thus the sum of the exponents is positive for $\gamma>2\kappa$. The product $\PI(\nabla X,\nabla Z_i)\in\CC_\delta^{-\kappa}(\IR^3)$ are defined as the limit of the regularization without renormalization content, as done in the previous section. We get
\begin{align}
e^{-X}\CH e^X\Gamma v^\sharp&=-\Delta v^\sharp-\P_{\nabla v}(Z_1+Z_3)-\P_v (Z_2+Z_4)-[\Delta,\P_{\nabla v}](Z_1+Z_3)-[\Delta,\P_v](Z_2+Z_4)\\
&\quad+\P_{\nabla v}(1-\Delta)Z_3+\nabla v\PI(\nabla X,\nabla Z_1)+\DC(\nabla v,\nabla X,\nabla Z_1)\\
&\quad+\P_{\nabla v}(1-\Delta)Z_4+\nabla v\PI(\nabla X,\nabla Z_2)+\DC(\nabla v,\nabla X,\nabla Z_2)\\
&\quad+\PI(\nabla X,\P_{\Delta v}Z_1+\P_{\nabla v}Z_2+\nabla v^\sharp)\\
&\quad-2\P_{\nabla X}\nabla v-\P_{\nabla v}2\nabla X_N+\P_Yv+\PI(Y,v)+\P_v(Y_N^>+Y^\le)\\
&=-\Delta v^\sharp-\P_{\nabla v}(Z_1+Z_3)-\P_v (Z_2+Z_4)-[\Delta,\P_{\nabla v}](Z_1+Z_3)-[\Delta,\P_v](Z_2+Z_4)\\
&\quad+\P_{\PI(\nabla X,\nabla Z_1)}\nabla v+\PI(\nabla v,\PI(\nabla X,\nabla Z_1))+\DC(\nabla v,\nabla X,\nabla Z_1)\\
&\quad+\P_{\PI(\nabla X,\nabla Z_2)}\nabla v+\PI(\nabla v,\PI(\nabla X,\nabla Z_2))+\DC(\nabla v,\nabla X,\nabla Z_2)\\
&\quad+\PI(\nabla X,\P_{\Delta v}Z_1+\P_{\nabla v}Z_2+\nabla v^\sharp)\\
&\quad-2\P_{\nabla X}\nabla v-\P_{\nabla v}2\nabla X_N+\P_Yv+\PI(Y,v)+\P_v(Y_N^>+Y^\le)
\end{align}
using again the paraproduct decomposition for products with $\PI(\nabla X,\nabla Z_i)$ and the cancellation with $Z_3$ and $Z_4$. Tracking the precise regularity of each terms yield the result up to applying again $\Gamma^{-1}$. For example, the roughest term is
\begin{equation}
\|\PI(\nabla X,\nabla v^\sharp)\|_{H^{\gamma-\frac{1}{2}-\kappa}(\IR^3)}\lesssim\|\nabla X\|_{\CC_\delta^{-\frac{1}{2}-\kappa}(\IR^3)}\|v^\sharp\|_{H^{1+\gamma}(\IR^3)}
\end{equation}
where $\gamma>\frac{1}{2}$ which is a loss of $\frac{3}{2}$ derivatives and completes the proof.
\end{proof}

While $\Gamma e^{X}H^2(\IR^d)\cap L_\mu^2(\IR^d)$ is not the domain of the Anderson Hamiltonian in full space due to the growth condition at infinity, one can show that this is a core however we do not pursue any spectral theory in this work. The following Corollary states that given $u\in L_\mu^2(\IR^d)$, it is equivalent to bound $v\in\Gamma H^2(\IR^d)$ or $e^X\CH e^Xv\in L^2(\IR^d)$ with $v=\Gamma v^\sharp$.

\medskip

\begin{corollary}\label{cor:EquivalenceDomainSpaces}
For any $\mu,\kappa>0$, one has
\begin{equation}
\|e^X\CH e^Xv+\Delta \Gamma^{-1}v\|_{L^2(\IR^d)}\lesssim\|v\|_{\Gamma H^{\frac{d}{2}+\kappa}\cap L_\mu^2}.
\end{equation}
\end{corollary}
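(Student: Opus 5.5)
The plan is to reduce the estimate to Proposition \ref{prop:fineHcomparison}, applied with a small regularity exponent, after factoring out the multiplication by $e^{2X}$. For $v=\Gamma v^\sharp$ with $v^\sharp\in H^{\frac d2+\kappa}(\IR^d)\cap L_\mu^2(\IR^d)$, I would read $\|v\|_{\Gamma H^{\frac d2+\kappa}}$ as $\|v^\sharp\|_{H^{\frac d2+\kappa}}$. By \eqref{EqExpressionH} and \eqref{ExpH},
\begin{equation}
e^X\CH e^Xv=e^{2X}\big(e^{-X}\CH e^X\Gamma v^\sharp\big),
\end{equation}
which holds as an identity of distributions once $v\in H^{\frac d2+2\kappa}_\delta$. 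This regularity is available because $\Gamma$ preserves $H^\sigma$ for $\sigma<2$ (respectively $\sigma<\frac32$ in $d=3$), as recalled in Remark \ref{remark:vsharpEquation}. I would then split
\begin{equation}
e^X\CH e^Xv+\Delta\Gamma^{-1}v=e^{2X}\big(e^{-X}\CH e^X\Gamma+\Delta\big)v^\sharp+\big(1-e^{2X}\big)\Delta v^\sharp .
\end{equation}

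For the first term, I would apply Proposition \ref{prop:fineHcomparison} with $\gamma$ replaced by a small $\kappa'>0$, where $\kappa'<\kappa$ and $\kappa'<2-\frac d2$. Its proof shows that, before the final application of $\Gamma^{-1}$, the quantity $(e^{-X}\CH e^X\Gamma+\Delta)v^\sharp$ lies in $H^{\kappa'-\kappa''}$ for arbitrary $\kappa''>0$. It is controlled by $\|v^\sharp\|_{H^{\frac d2+\kappa}\cap L^2_\mu}$, with the weight used only to absorb the small growth of $Y^\le$ and $X_N$. Choosing $\kappa''<\kappa'$ gives a bound in $L^2$. Multiplication by $e^{2X}\in L^\infty$, which is bounded since $X=\SU_>(\cdots)$ decays at infinity, preserves this bound. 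The three-dimensional case goes the same way using the higher-order expansion with $Z_3,Z_4$ and the corrector $\DC$. The only new point there is to check that each term listed in that proof is in $L^2$ when $\gamma$ is close to $0$, rather than requiring $\gamma>\frac12$. I would recheck the term $\PI(\nabla X,\nabla v^\sharp)$ in particular, using the paracontrolled structure of $\nabla v$.

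The main obstacle is the commutator-type term $(1-e^{2X})\Delta v^\sharp$, since $\Delta v^\sharp\in H^{\frac d2-2+\kappa}$ has negative regularity and $1-e^{2X}$ only lies in $\CC^{2-\frac d2-\kappa}_\delta$. The product $(1-e^{2X})\Delta v^\sharp$ is well defined because the regularities sum to a positive number, but in $d=2$ it only lands in $H^{-1+\kappa-\kappa}$, not in $L^2$. My first attempt would be to not split off $(1-e^{2X})\Delta v^\sharp$ at all. Instead I would use the divergence form
\begin{equation}
e^X\CH e^Xv=-\nabla\big(e^{2X}\nabla v\big)+e^{2X}Yv,
\end{equation}
insert the paracontrolled ansatz $v=\P_{\nabla v}Z_1+\P_vZ_2+v^\sharp$, and use that $(1-\Delta)Z_1=2\nabla(X-X_N)$. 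The aim is for the term $-\P_{\nabla v}\Delta Z_1$ to cancel the leading paraproduct $-\P_{\nabla v^\sharp}\nabla e^{2X}$ of $-\nabla(e^{2X}\nabla v)$, to first order in $X$. The remaining higher-order part $\nabla(e^{2X}-1-2X)$ would be handled by a further truncation $X-X_N$. If that cancellation is not exact, the fallback would be to absorb the factor $e^{2X}$ into the definition of $\Gamma$ by writing $\Gamma^{-1}v$ with the multiplier included, so that the corollary holds with $\Delta\Gamma^{-1}$ interpreted up to the $L^\infty$-bounded multiplication by $e^{2X}$. This is exactly the equivalence needed in the propagation-of-regularity proposition, which only uses that $\|e^X\CH e^Xv\|_{L^2_{\mu'}}$ and $\|\Delta v^\sharp\|_{L^2}$ control each other modulo lower-order norms.
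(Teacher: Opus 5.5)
The paper gives no separate argument for this corollary; it is meant to follow from Proposition \ref{prop:fineHcomparison} with a small $\gamma$. That is your first step, and your splitting and your diagnosis of $(1-e^{2X})\Delta v^\sharp$ are both correct. The gap is in what you conclude from that term. It is not a flaw of the splitting that a cleverer cancellation can repair. Its paraproduct part $\P_{1-e^{2X}}\Delta v^\sharp$ carries the high frequencies of $v^\sharp$. The corrections $\P_{\nabla v}Z_1$ and $\P_vZ_2$ in the ansatz only carry high frequencies of the noise objects $Z_i$. They can therefore cancel terms like $\P_{\nabla v}\nabla e^{2X}$, which is what your first attempt targets, but never $\P_{1-e^{2X}}\Delta v^\sharp$. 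A further truncation of $X$ changes nothing.

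In fact the displayed inequality is false as written. Fix $\kappa<2-\frac d2$ and take $v^\sharp=N^{-\frac d2-\kappa}\chi(x)e^{iNx_1}$, where $\chi$ is a bump supported where $|1-e^{2X}|\ge c>0$; this is possible since $X$ is continuous and not identically zero. Set $v=\Gamma v^\sharp$. Then:
\begin{itemize}
\item $\|v\|_{\Gamma H^{\frac d2+\kappa}\cap L^2_\mu}=O(1)$;
\item the first piece $e^{2X}(e^{-X}\CH e^X\Gamma+\Delta)v^\sharp$ is $O(1)$ in $L^2$ by (the proof of) Proposition \ref{prop:fineHcomparison};
\item $\|(1-e^{2X})\Delta v^\sharp\|_{L^2}\gtrsim cN^{2-\frac d2-\kappa}\to\infty$.
\end{itemize}

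So no argument can prove the statement literally. The right conclusion is that $e^X\CH e^X$ must be replaced by $e^{-X}\CH e^X$, or by $\Gamma^{-1}e^{-X}\CH e^X$. The latter version reads $\|(\CH^\sharp+\Delta)v^\sharp\|_{L^2}\lesssim\|v^\sharp\|_{H^{\frac d2+\kappa}\cap L^2_\mu}$, which is the proposition with small $\gamma$. Your first step proves exactly the $e^{-X}$ version. This is also all that the propagation-of-regularity proposition uses, because $\|e^X\CH e^Xv\|_{L^2}$ and $\|e^{-X}\CH e^Xv\|_{L^2}$ are comparable via $e^{\pm2X}\in L^\infty$.

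Your fallback, read as an estimate on $e^X\CH e^Xv+e^{2X}\Delta\Gamma^{-1}v$, is this corrected statement. Present it as a correction of the estimate, not as ``absorbing $e^{2X}$ into $\Gamma$'': modifying $\Gamma$ changes the space $\Gamma H^2$ on which the flow estimates are formulated.

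Two smaller points in $d=3$:
\begin{itemize}
\item $\Gamma$ only preserves $H^\sigma$ for $\sigma<\frac32$. So $v=\Gamma v^\sharp$ is not in $H^{\frac32+2\kappa}$, and $\nabla X\cdot\nabla v$ is defined only through the paracontrolled expansion with the corrector $\DC$, not classically as you claim.
\item The condition $\gamma>\frac12$ in the proof of the proposition is stated for $v^\sharp\in H^{1+\gamma}$ (see its last display). It therefore holds automatically for $v^\sharp\in H^{\frac32+\kappa}$, and no extra check is needed.
\end{itemize}
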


% \bibliographystyle{siam}
% \bibliography{biblio.bib}

\begin{thebibliography}{10}

\bibitem{BCD}
{\sc H.~Bahouri, J.-Y. Chemin, and R.~Danchin}, {\em Fourier analysis and
  nonlinear partial differential equations}, vol.~343 of Grundlehren Math.
  Wiss., Berlin: Heidelberg, 2011.

\bibitem{BonjioanniTorrea06}
{\sc B.~Bongioanni and J.~L. Torrea}, {\em Sobolev spaces associated to the
  harmonic oscillator}, Proc. Indian Acad. Sci., Math. Sci., 116 (2006),
  pp.~337--360.

\bibitem{BGT}
{\sc N.~{Burq}, P.~{G\'erard}, and N.~{Tzvetkov}}, {\em {Strichartz
  inequalities and the nonlinear Schr\"odinger equation on compact manifolds}},
  {Am. J. Math.}, 126 (2004), pp.~569--605.

\bibitem{ChauleurMouzard23}
{\sc Q.~Chauleur and A.~Mouzard}, {\em The logarithmic {Schr{\"o}dinger}
  equation with spatial white noise on the full space}, J. Evol. Equ., 25
  (2025), p.~28.
\newblock Id/No 1.

\bibitem{DRTV24}
{\sc A.~Debussche, R.~Liu, N.~Tzvetkov, and N.~Visciglia}, {\em Global
  well-posedness of the 2d nonlinear {Schr{\"o}dinger} equation with
  multiplicative spatial white noise on the full space}, Probab. Theory Relat.
  Fields, 189 (2024), pp.~1161--1218.

\bibitem{DebusscheMartin19}
{\sc A.~Debussche and J.~Martin}, {\em Solution to the stochastic
  {Schr{\"o}dinger} equation on the full space}, Nonlinearity, 32 (2019),
  pp.~1147--1174.

\bibitem{DebusscheMouzard24}
{\sc A.~Debussche and A.~Mouzard}, {\em Periodic nonlinear {Schr{\"o}dinger}
  equation with distributional potential and invariant measures},  (2024).

\bibitem{DW}
{\sc A.~Debussche and H.~Weber}, {\em The {S}chr\"{o}dinger equation with
  spatial white noise potential}, Electron. J. Probab., 23 (2018), pp.~Paper
  No. 28, 16.

\bibitem{edmunds1996}
{\sc D.~E. Edmunds and H.~Triebel}, {\em Function spaces, entropy numbers,
  differential operators}, vol.~120 of Cambridge Tracts in Mathematics,
  Cambridge University Press, Cambridge, 1996.

\bibitem{EulryMouzard25}
{\sc H.~Eulry and A.~Mouzard}, {\em Ergodicity of the {Anderson} ${{\Phi_2}}^4$
  model},  (2025).

\bibitem{GinibreVelo85}
{\sc J.~Ginibre and G.~Velo}, {\em The global {Cauchy} problem for the
  nonlinear {Schr{\"o}dinger} equation revisited}, Ann. Inst. Henri
  Poincar{\'e}, Anal. Non Lin{\'e}aire, 2 (1985), pp.~309--327.

\bibitem{GubinelliHofmanova19}
{\sc M.~Gubinelli and M.~Hofmanov{\'a}}, {\em Global solutions to elliptic and
  parabolic {{\({\Phi^4}\)}} models in {Euclidean} space}, Commun. Math. Phys.,
  368 (2019), pp.~1201--1266.

\bibitem{GIP}
{\sc M.~Gubinelli, P.~Imkeller, and N.~Perkowski}, {\em Paracontrolled
  distributions and singular {PDE}s}, Forum Math. Pi, 3 (2015), pp.~e6, 75.

\bibitem{GubinelliPerkowski2017}
{\sc M.~Gubinelli and N.~Perkowski}, {\em {KPZ} reloaded}, Commun. Math. Phys.,
  349 (2017), pp.~165--269.

\bibitem{GubinelliPerkowskiNotes16}
\leavevmode\vrule height 2pt depth -1.6pt width 23pt, {\em An introduction to
  singular {SPDEs}},  (2018), pp.~69--99.

\bibitem{GUZ}
{\sc M.~Gubinelli, B.~Ugurcan, and I.~Zachhuber}, {\em Semilinear evolution
  equations for the {A}nderson {H}amiltonian in two and three dimensions},
  Stoch. Partial Differ. Equ. Anal. Comput., 8 (2020), pp.~82--149.

\bibitem{Hai14}
{\sc M.~Hairer}, {\em A theory of regularity structures}, Invent. Math., 198
  (2014), pp.~269--504.

\bibitem{HairerLabbe15}
{\sc M.~Hairer and C.~Labb{\'e}}, {\em A simple construction of the continuum
  parabolic {Anderson} model on {{\(\mathbf{R}^2\)}}}, Electron. Commun.
  Probab., 20 (2015), p.~11.
\newblock Id/No 43.

\bibitem{HsuLabbe25}
{\sc Y.-S. Hsu and C.~Labb{\'e}}, {\em Construction and spectrum of the
  {Anderson} {Hamiltonian} with white noise potential on $\mathbb{R}^2$ and
  $\mathbb{R}^3$},  (2025).

\bibitem{KeelTao1998}
{\sc M.~Keel and T.~Tao}, {\em Endpoint {S}trichartz estimates}, Amer. J.
  Math., 120 (1998), pp.~955--980.

\bibitem{LiuTzvetkov2026}
{\sc R.~Liu and N.~Tzvetkov}, {\em Large torus limit of global dynamics of the
  two-dimensional dispersive {Anderson} model},  (2026).

\bibitem{Mackowiak25}
{\sc S.~Mackowiak}, {\em Local wellposedness of the 2d
  anderson-gross-pitaevskii equation},  (2025).

\bibitem{Mackowiak2025}
{\sc S.~Mackowiak}, {\em Wellposedness of the cubic {Gross}-{Pitaevskii}
  equation with spatial white noise on {{\(\mathbb{R}^2\)}}}, Nonlinearity, 38
  (2025), p.~40.
\newblock Id/No 035026.

\bibitem{MatsudaZuijlen22}
{\sc T.~Matsuda and W.~van Zuijlen}, {\em Anderson hamiltonians with singular
  potentials}, 2022.

\bibitem{Mouzard}
{\sc A.~Mouzard}, {\em Weyl law for the {Anderson} {Hamiltonian} on a
  two-dimensional manifold}, Ann. Inst. Henri Poincar{\'e}, Probab. Stat., 58
  (2022), pp.~1385--1425.

\bibitem{MouzardOuhabaz23}
{\sc A.~Mouzard and E.~M. Ouhabaz}, {\em A simple construction of the
  {Anderson} operator via its quadratic form in dimensions two and three}, C.
  R., Math., Acad. Sci. Paris, 363 (2025), pp.~183--197.

\bibitem{MZ}
{\sc A.~Mouzard and I.~Zachhuber}, {\em Strichartz inequalities with white
  noise potential on compact surfaces}, Anal. PDE, 17 (2024), pp.~421--454.

\bibitem{sickel2014}
{\sc W.~Sickel, L.~Skrzypczak, and J.~Vyb\'{\i}ral}, {\em Complex interpolation
  of weighted {B}esov and {L}izorkin-{T}riebel spaces}, Acta Math. Sin. (Engl.
  Ser.), 30 (2014), pp.~1297--1323.

\bibitem{triebel1983}
{\sc H.~Triebel}, {\em Theory of function spaces}, vol.~78 of Monographs in
  Mathematics, Birkh\"{a}user Verlag, Basel, 1983.

\bibitem{TzvetkovVisciglia23}
{\sc N.~Tzvetkov and N.~Visciglia}, {\em Global dynamics of the {{\(2d\)}}
  {NLS} with white noise potential and generic polynomial nonlinearity},
  Commun. Math. Phys., 401 (2023), pp.~3109--3121.

\bibitem{TzvetkovVisciglia23bis}
\leavevmode\vrule height 2pt depth -1.6pt width 23pt, {\em Two dimensional
  nonlinear {Schr{\"o}dinger} equation with spatial white noise potential and
  fourth order nonlinearity}, Stoch. Partial Differ. Equ., Anal. Comput., 11
  (2023), pp.~948--987.

\bibitem{Ueki25}
{\sc N.~Ueki}, {\em A definition of self-adjoint operators derived from the
  {Schr{\"o}dinger} operator with the white noise potential on the plane},
  Stochastic Processes Appl., 186 (2025), p.~30.
\newblock Id/No 104642.

\bibitem{DeVecchiJiZachhuber25}
{\sc F.~C.~D. Vecchi, X.~Ji, and I.~Zachhuber}, {\em Stochastic hartree nls in
  3d coming from a many-body quantum system with white noise potential},
  (2025).

\bibitem{Yosida74}
{\sc K.~Yosida}, {\em Functional analysis. 4th ed}, vol.~123 of Grundlehren
  Math. Wiss., Springer, Cham, 1974.

\end{thebibliography}

\vspace{2cm}

\noindent \textcolor{gray}{$\bullet$} A. Mouzard -- Modal’X, Université Paris Nanterre, 92000 Nanterre, France.\\
{\it E-mail}: antoine.mouzard@math.cnrs.fr

\smallskip

\noindent \textcolor{gray}{$\bullet$} I. Zachhuber -- Institut für Mathematik, FU Berlin.\\
{\it E-mail}: imzach@zedat.fu-berlin.de
 
\end{document}